\definecolor{darkblue}{RGB}{0,0,170}
\definecolor{brickred}{RGB}{200,0,0}
\newcommand{\R}{\mathbb{R}}
\newcommand{\N}{\mathbb{N}}
\newcommand{\T}{\mathcal{T}}
\newcommand{\eps}{\varepsilon}
\newcommand{\dist}{\hbox{dist}}
\newcommand{\Ds}{{\left(\left.-\lapl\right|_\Omega\right)}^s}
\newcommand{\Dums}{{\left(\left.-\lapl\right|_\Omega\right)}^{1-s}}
\newcommand{\Dms}{{\left(\left.-\lapl\right|_\Omega\right)}^{-s}}
\newcommand{\Dsmu}{{\left(\left.-\lapl\right|_\Omega\right)}^{s-1}}
\newcommand{\rest}{{\left(\left.-\lapl\right|_\Omega\right)}}
\newcommand{\trest}{{\left.-t\lapl\right|_\Omega}}
\newcommand{\super}{\overline}
\newcommand{\lapl}{\triangle}
\newcommand{\grad}{\triangledown\!}
\newtheorem{defi}{Definition}[]
\newtheorem{theo}[defi]{Theorem}
\newtheorem{prop}[defi]{Proposition}
\newtheorem{lem}[defi]{Lemma}
\newtheorem{rmk}[defi]{Remark}
\newtheorem{cor}[defi]{Corollary}
\title{\bf\scshape Nonhomogeneous boundary conditions for the spectral fractional Laplacian}
\author{\scshape nicola abatangelo, louis dupaigne}
\date{ }
\begin{document}

\maketitle

\begin{abstract}
We present a construction of harmonic functions on bounded domains for the {\it spectral fractional Laplacian} operator and we classify them in terms of their divergent profile at the boundary. This is used to establish and solve boundary value problems associated with nonhomogeneous boundary conditions. 
We provide a weak-$L^1$ theory to show how
problems with measure data at the boundary and inside the domain are well-posed. We study linear and semilinear problems, performing a sub- and supersolution method, and we finally show the existence of {\it large solutions} for some power-like nonlinearities. 
\end{abstract}

{\let\thefootnote\relax\footnote{{\it Key words:} spectral fractional Laplacian, Dirichlet problem, boundary blow-up solutions, large solutions}

\footnote{{\it MSC 2010:} Primary: 35B40; Secondary: 35B30, 45P05, 35C15}}

\setcounter{footnote}{0}

\section{Introduction}
Given a bounded domain $\Omega$ of the Euclidean space $\R^N$, 
the {\it spectral fractional Laplacian} operator $\Ds,\ s\in(0,1),$ is classically defined as 
a fractional power of the Laplacian with homogeneous Dirichlet %(alternatively Neumann) 
boundary conditions, 
seen as a self-adjoint operator in the Lebesgue space $L^2(\Omega)$, see \eqref{As} below.
This provides a nonlocal operator of elliptic type with {\it homogeneous} boundary conditions.
Recent bibliography on this operator can be found e.g. in Bonforte, Sire and Vazquez  \cite{bsv}, Grubb \cite{grubb}, Caffarelli and Stinga \cite{caffarelli-stinga}, Servadei and Valdinoci \cite{servadei-valdinoci}.

One aspect of the theory is however left unanswered:
the formulation of natural {\it nonhomogeneous} boundary conditions. A first attempt can be found in the work of Dhifli, M\^{a}agli and Zribi \cite{dhifli}. 
The investigations that have resulted in the present paper turn out, we hope, to shed some further light on this question. 
We provide a weak formulation, which is well-posed in the sense of Hadamard, for linear problems of the form  
\begin{equation}\label{main}
\left\{\begin{aligned}
\Ds u &=\ \mu & \hbox{ in }\Omega, \\
%\delta^{2-2s}u &=\ \zeta & \hbox{ on }\partial\Omega,
\frac{u}{h_1} &=\ \zeta & \hbox{ on }\partial\Omega,
\end{aligned}\right.
\end{equation}
where $h_1$ is a reference function, see \eqref{h1} below, with prescribed singular behaviour at the boundary. Namely, $h_1$ is bounded above and below by constant multiples of $\delta^{-(2-2s)}$, where
\[
\delta(x):=\dist(x,\partial\Omega)
\] 
is the distance to the boundary and the left-hand side of the boundary condition must be understood as 
a limit as $\delta$ converges to zero. 
In other words, unlike the classical Dirichlet problem for the Laplace operator, 
nonhomogeneous boundary conditions must be singular. 
In addition, if the data $\mu, \zeta$ are smooth, the solution blows up at the fixed rate $\delta^{-(2-2s)}$. 
This is very similar indeed to the theory of nonhomogeneous boundary conditions for the classical (sometimes called "restricted") fractional Laplacian - although in that case the blow-up rate is of order $\delta^{-(1-s)}$ -
as analysed from different perspectives by Grubb \cite{grubb0} and the first author \cite{a1} of the present note.
In fact, for the special case of positive $s$-harmonic functions, that is when $\mu=0$, 
the singular boundary condition was already identified in previous works 
emphasizing the probabilistic and potential theoretic aspects of the problem: 
see e.g. Bogdan, Byczkowski, Kulczycki, Ryznar, Song and Vondra{\v{c}}ek \cite{bbkr} 
for an explicit example in the framework of the classical fractional Laplacian, 
as well as Song and Vondra\v{c}ek \cite{song-vondra}, 
Glover, Pop-Stojanovic, Rao, \v{S}iki\'c, Song and Vondra\v{c}ek \cite{gprssv} and Song \cite{song}
for the spectral fractional Laplacian. 

Turning to nonlinear problems, even more singular boundary conditions arise: 
in the above system, if $\mu=-u^p$ for suitable values of $p$, one may choose $\zeta=+\infty$, 
in the sense that the solution $u$ will blow up at a higher rate
with respect to $\delta^{-(2-2s)}$ and 
controlled by the (scale-invariant) one $\delta^{-2s/(p-1)}$. 
Note that the value $\zeta=+\infty$ is not admissible for linear problems.
This was already observed by the first author in the context of the fractional Laplacian, see \cite{a2},
and this is what we prove here for the spectral fractional Laplacian. 
Interestingly, the range of admissible exponents $p$ is different according to which operator one works with.

\subsection*{Main results} 
For clarity, we list here the definitions and statements that we use, with reference to the sections of the paper where the proofs can be found. First recall the definition of the spectral fractional Laplacian:
\begin{defi}
Let $\Omega\subset\R^N$ a bounded %$C^{1,1}$
domain and let ${\{\varphi_j\}}_{j\in\N}$ 
be a Hilbert basis of $L^2(\Omega)$ consisting of eigenfunctions of the Dirichlet Laplacian $\left.-\lapl\right\vert_\Omega$, associated to the eigenvalues $\lambda_j$, $j\in\N$, i.e.\footnote{See Brezis \cite[Theorem 9.31]{brezis}.} $\varphi_j\in H^1_0(\Omega)\cap C^\infty(\Omega)$ and $-\lapl\varphi_j=\lambda_j\varphi_j$ in $\Omega$.
Given $s\in(0,1)$, consider the Hilbert space\footnote{When $\Omega$ is smooth, $H(2s)$ coincides with the Sobolev space $H^{2s}(\Omega)$ if $0<s<1/4$, $H_{00}^{s}(\Omega)$ if $s\in\{1/4, 3/4\}$, $H_0^{2s}(\Omega)$ otherwise; see Lions and Magenes \cite[Theorems 11.6 and 11.7 pp. 70--72]{lions-magenes}.} 
\[
H(2s):=\left\{v=\sum_{j=1}^\infty \widehat v_j\varphi_j\in L^2(\Omega):\|v\|_{H(2s)}^2=\sum_{j=0}^\infty\lambda_j^{2s}\vert\widehat v_j\vert^2<\infty\right\}.
\]
The spectral fractional Laplacian of  $u\in H(2s)$ is the function belonging to $L^2(\Omega)$ given by the formula
\begin{equation}\label{As}
\Ds u\ =\ \displaystyle \sum_{j=1}^\infty\lambda_j^s\widehat u_j\,\varphi_j.
\end{equation}
\end{defi}
Note that $C^\infty_c(\Omega)\subset H(2s) \hookrightarrow L^2(\Omega)$. So, the operator $\Ds $ is unbounded, densely defined  and with bounded inverse $\Dms$ in $L^2(\Omega)$. Alternatively, for almost every $x\in\Omega$,
\begin{equation}
\Ds u(x)\ =  PV\int_\Omega[u(x)-u(y)]J(x,y)\;dy+\kappa(x)u(x),\label{As2}
\end{equation}
where, letting $p_\Omega(t,x,y)$ denote the heat kernel of $\left.-\lapl\right\vert_\Omega$, 
\begin{equation}\label{jkappa}
J(x,y)=\frac{s}{\Gamma(1-s)}\int_0^\infty\frac{p_\Omega(t,x,y)}{t^{1+s}}\;dt\qquad\hbox{and}\qquad
\kappa(x)=\frac{s}{\Gamma(1-s)}\int_\Omega\left(1-\int_\Omega p_\Omega(t,x,y)\;dy\right)\frac{dt}{t^{1+s}}
\end{equation}
are respectively the {\it jumping kernel} and the {\it killing measure,}\footnote{in the language of potential theory of killed stochastic processes. 
%Although this is not usually written in the literature, 
Note that the integral in \eqref{As2} must be understood in the sense of principal values. 
To see this, look at  \eqref{Jbound}.} see Song, Vondra\v{c}ek \cite[formulas (3.3) and (3.4)]{song-vondra}. 
For the reader's convenience, we provide a proof of \eqref{As2} in the Appendix. 
We assume from now on that  
\[\text{$\Omega$ is of class $C^{1,1}$.}\] 
In particular,  
sharp bounds are known for the heat kernel $p_\Omega$, see \eqref{hkb} below, 
and provide in turn sharp estimates for $J(x,y)$, see \eqref{Jbound} below, 
so that the right-hand side of \eqref{As2} remains well-defined
for every $x\in\Omega$ under the assumption that 
$u\in C^{2s+\eps}_{loc}(\Omega)\cap L^1(\Omega,\delta(x)dx)$ 
for some $\eps>0$. 
This allows us to {\it define} the spectral fractional Laplacian
of functions which {\it do not} vanish on the boundary of $\Omega$. 
As a simple example, observe that the function $u=1$ 
does not belong to $H(2s)$ if $s\ge1/4$, yet it solves
\eqref{main} for $\mu=\kappa$ and $\zeta=0$.

\begin{defi} The Green function and the Poisson kernel of the spectral fractional Laplacian are defined respectively by
\begin{equation}\label{green}
G_\Omega^s(x,y)\ =\ \frac1{\Gamma(s)}\int_0^\infty p_\Omega(t,x,y)\,t^{s-1}\;dt,
\qquad x,\,y\in\Omega,x\neq y,\, s\in(0,1],
\end{equation}
where $p_\Omega$ denotes the heat kernel of $\left.-\lapl\right|_\Omega$, and by
\begin{equation}\label{poisson}
P_\Omega^s(x,y)\ :=\ -\left.\frac{\partial}{\partial\nu_y}\right. G^s_\Omega(x,y), \qquad x\in\Omega, y\in\partial\Omega.
\end{equation}
where $\nu$ is the outward unit normal to $\partial\Omega$.  
\end{defi}

In Section \ref{green-sect}, we shall prove that $P_\Omega^s$ is well-defined
(see Lemma \ref{lemma8}) and review some useful identities
involving the Green function $G_\Omega^s$ 
and the Poisson kernel $P_\Omega^s$. 
Now, let us define weak solutions of \eqref{main}. 
\begin{defi} Consider the test function space
\begin{equation}\label{test}
\T(\Omega)\ :=\ \rest^{-s}C^\infty_c(\Omega)
\end{equation}
and the weight
\begin{equation}\label{h1}
h_1(x) = \int_{\partial\Omega} P^s_\Omega(x,y)\,d\sigma(y),\qquad x\in\Omega.
\end{equation}
Given two Radon measures $\mu\in\mathcal{M}(\Omega)$ 
and $\zeta\in\mathcal{M}(\partial\Omega)$ with
\begin{equation}\label{hypo}
\int_\Omega\delta(x)\;d|\mu|(x)\ <\ \infty,\qquad |\zeta|(\partial\Omega)\ <\ \infty,
\end{equation}
a function $u\in L^1_{loc}(\Omega)$ is a weak solution to
\begin{equation}\label{prob}
\left\{\begin{aligned}
\Ds u &=\ \mu & \hbox{ in }\Omega \\
\frac{u}{h_1} &=\ \zeta & \hbox{ on }\partial\Omega
\end{aligned}\right.
\end{equation}
if, for any $\psi\in\T(\Omega)$,
\begin{equation}\label{byparts}
\int_\Omega u\,\Ds\psi\ =\ \int_\Omega \psi\,d\mu - \int_{\partial\Omega} \frac{\partial\psi}{\partial\nu}\;d\zeta.
\end{equation}
\end{defi}

We shall prove that $\T(\Omega)\subseteq C^1_0(\overline{\Omega})$, 
see Lemma \ref{lemma-test}, 
so that all integrals above are well-defined. 
Equation \eqref{byparts} is indeed a weak formulation of
\eqref{prob}, as the following lemma shows.

\begin{lem}\label{clas-weak} 
\begin{enumerate}
\item (weak solutions are distributional solutions) % and have a trace) 
Assume that $u\in L^1_{loc}(\Omega)$ is a weak solution of \eqref{prob}. 
Then in fact, $u\in L^1(\Omega,\delta(x)dx)$ and $\Ds u=\mu$ in the sense
of distributions i.e. for any $\psi\in C^\infty_c(\Omega)$,
$\frac{\Ds\psi}{\delta}$ is bounded and
\[
\int_\Omega u \Ds\psi = \int_\Omega \psi \;d\mu,
\]
moreover the boundary condition holds in the sense that for every $\phi\in C(\super\Omega)$
\[
\lim_{t\downarrow 0}\frac1t\int_{\{\delta(x)<t\}}\frac{u(x)}{h_1(x)}\,\phi(x)\:d\sigma(x)= \int_{\partial\Omega}\phi(x)\:d\zeta(x)
\]
whenever $\mu\in\mathcal{M}(\Omega)$ satisfies \eqref{hypo}
and $\zeta\in L^1(\partial\Omega)$.
\item (for smooth data, weak solutions are classical) Assume that $u\in L^1_{loc}(\Omega)$ is a weak solution of \eqref{prob}, where $\mu\in C^\alpha(\overline\Omega)$ for some $\alpha$ such that $\alpha+2s\not\in\N$ and $\zeta\in C(\partial\Omega)$. Then, $\Ds u$ is well-defined by \eqref{As2} for every $x\in\Omega$, $\Ds u(x)=\mu(x)$ for all $x\in\Omega$ and for all $x_0\in\partial\Omega$,
\[
\lim_{\stackrel{\hbox{\footnotesize $x\to x_0$}}{x\in\Omega}}\frac{u(x)}{h_1(x)} = \zeta(x_0).
\]
\item (classical solutions are weak solutions) Assume that $u\in C^{2s+\eps}_{loc}(\Omega)$ is such that $u/h_1\in C(\super\Omega)$. Let $\mu=\Ds u$ be given by \eqref{As2} and $\zeta=\left. u/h_1\right\vert_{\partial\Omega}$. Then, $u$ is a weak solution of \eqref{prob}.
\end{enumerate}
\end{lem}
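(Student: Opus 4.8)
The plan is to reduce all three items to the boundary behaviour of the Green function $G_\Omega^s$ and the Poisson kernel $P_\Omega^s$ analysed in Section \ref{green-sect}, by comparing an arbitrary solution with the explicit candidate
\[
\bar u\ :=\ \int_\Omega G_\Omega^s(\cdot,y)\,d\mu(y)+\int_{\partial\Omega}P_\Omega^s(\cdot,y)\,d\zeta(y)\ =:\ G_\Omega^s[\mu]+P_\Omega^s[\zeta].
\]
The computational engine is a pair of identities valid for a test function $\psi=\Dms\varphi\in\T(\Omega)$ with $\varphi\in C^\infty_c(\Omega)$: since $G_\Omega^s$ is the (symmetric) kernel of $\Dms$, cf.\ \eqref{green}, one has $\psi(y)=\int_\Omega G_\Omega^s(x,y)\,\varphi(x)\,dx$, and differentiating this in $y$ and using \eqref{poisson} gives $-\partial_\nu\psi(y)=\int_\Omega P_\Omega^s(x,y)\,\varphi(x)\,dx$ for $y\in\partial\Omega$. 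Substituting these into $\bar u$ and applying Fubini's theorem --- legitimate by \eqref{hypo}, the boundary bounds on $G_\Omega^s$ and $P_\Omega^s$ from Section \ref{green-sect}, and $\psi\in C^1_0(\super\Omega)$ (Lemma \ref{lemma-test}) --- one gets at once
\[
\int_\Omega\bar u\,\Ds\psi\ =\ \int_\Omega\bar u\,\varphi\ =\ \int_\Omega\psi\,d\mu-\int_{\partial\Omega}\frac{\partial\psi}{\partial\nu}\,d\zeta,
\]
so $\bar u$ is a weak solution of \eqref{prob}; the same bounds give $\bar u\in L^1(\Omega,\delta\,dx)$.

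The second step is uniqueness, which also settles the $L^1(\Omega,\delta\,dx)$ claim of item (1). If $u$ is any weak solution of \eqref{prob}, then $v:=u-\bar u\in L^1_{loc}(\Omega)$ satisfies $\int_\Omega v\,\Ds\psi=0$ for every $\psi\in\T(\Omega)$; since by definition \eqref{test} the operator $\Ds$ maps $\T(\Omega)$ onto $C^\infty_c(\Omega)$, this means $\int_\Omega v\,\varphi=0$ for all $\varphi\in C^\infty_c(\Omega)$, whence $v\equiv0$ and $u=\bar u\in L^1(\Omega,\delta\,dx)$. Knowing $u=\bar u$, item (1) is finished by (i) running the same Fubini computation with a test function $\psi\in C^\infty_c(\Omega)$ directly (so $\varphi:=\Ds\psi$, which by \eqref{As2} and the bound \eqref{Jbound} is bounded with $\Ds\psi/\delta$ bounded, $\Dms\Ds\psi=\psi$, and $\partial_\nu\psi\equiv0$), giving the distributional identity $\int_\Omega u\,\Ds\psi=\int_\Omega\psi\,d\mu$; and (ii) reading the boundary identity off the asymptotics $G_\Omega^s[\mu]/h_1\to0$ at $\partial\Omega$ and the approximate-identity property $\frac1t\int_{\{\delta<t\}}(P_\Omega^s[\zeta]/h_1)\,\phi\,d\sigma\to\int_{\partial\Omega}\phi\,d\zeta$ for $\zeta\in L^1(\partial\Omega)$ and $\phi\in C(\super\Omega)$, both established in Section \ref{green-sect}.

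For item (2), knowing $u=\bar u$, I would invoke the Schauder regularity of the spectral fractional Laplacian: for $\mu\in C^\alpha(\super\Omega)$ with $\alpha+2s\notin\N$ the potential $G_\Omega^s[\mu]$ lies in $C^{2s+\alpha}_{loc}(\Omega)$ and $\Ds(G_\Omega^s[\mu])=\mu$ pointwise, while $P_\Omega^s[\zeta]$ is smooth and $s$-harmonic inside $\Omega$; hence $u\in C^{2s+\eps}_{loc}(\Omega)$, formula \eqref{As2} is applicable and returns $\Ds u(x)=\mu(x)$ for every $x$, and continuity of $\zeta$ upgrades the averaged convergence of item (1) to the pointwise limit $u(x)/h_1(x)\to\zeta(x_0)$. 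Item (3) is the converse: given $u\in C^{2s+\eps}_{loc}(\Omega)$ with $u/h_1\in C(\super\Omega)$, put $\mu:=\Ds u$ (defined by \eqref{As2}) and $\zeta:=u/h_1|_{\partial\Omega}$; the cancellation $\Ds h_1=0$ shows that $\Ds u$ is no more singular than $\delta^{-(2-2s)+\eps}$ near $\partial\Omega$, so $\mu$ satisfies \eqref{hypo}, and then $w:=u-G_\Omega^s[\mu]$ is $s$-harmonic with $w/h_1\to\zeta$, so the classification of $s$-harmonic functions by their divergent boundary profile --- the core construction of Section \ref{green-sect} --- forces $w=P_\Omega^s[\zeta]$, i.e.\ $u=\bar u$, and the first computation shows that $u$ solves \eqref{byparts}.

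I expect the only non-routine points, all of them supplied by Section \ref{green-sect}, to be: (a) the differentiation under the integral sign giving $-\partial_\nu\psi(y)=\int_\Omega P_\Omega^s(x,y)\varphi(x)\,dx$ on $\partial\Omega$, which needs uniform estimates on $\nabla_y G_\Omega^s(x,y)$ up to the boundary together with $\T(\Omega)\subset C^1_0(\super\Omega)$; (b) the sharp boundary asymptotics yielding $G_\Omega^s[\mu]/h_1\to0$ and the approximate-identity behaviour of $P_\Omega^s$ relative to the weight $h_1$; and, for item (3), (c) the uniqueness of $s$-harmonic functions with a prescribed divergent profile at $\partial\Omega$. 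Once these are available the lemma reduces to the bookkeeping above.
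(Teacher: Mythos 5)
Your proposal follows the same overall architecture as the paper: everything hinges on the Green/Poisson representation $\bar u = G^s_\Omega[\mu]+P^s_\Omega[\zeta]$, its being a weak solution, uniqueness of weak solutions, and the boundary-behaviour results of Sections \ref{green-sect}--\ref{bb-sect}. You do, however, reorganize the argument: you first establish $u=\bar u$ and then read off all three items, whereas the paper proves the distributional identity in item (1) by a direct approximation of $\psi\in C^\infty_c(\Omega)$ by $\psi_k=\rest^{-s}(\eta_k\Ds\psi)\in\T(\Omega)$ (bump functions $\eta_k\uparrow1$), without first invoking the representation; the representation formula is brought in only for the boundary trace. Your route is logically equivalent but slightly cleaner, at the price of leaning entirely on Theorem \ref{point}, which the paper proves in the same section.

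Two points deserve tightening. First, for item (3) you appeal to ``the classification of $s$-harmonic functions by their divergent boundary profile'' to force $w=u-G^s_\Omega[\mu]=P^s_\Omega[\zeta]$, but the classification result (Lemma \ref{harm}) concerns only nonnegative $s$-harmonic functions and, moreover, represents them in terms of an a priori unknown measure rather than recovering them from a prescribed trace. What is actually used in the paper (and what is needed here) is the pointwise maximum principle Lemma \ref{maxprinc-clas}: $w-P^s_\Omega[\zeta]$ is $s$-harmonic, smooth in $\Omega$, with $(w-P^s_\Omega[\zeta])/h_1\to 0$, so $|w-P^s_\Omega[\zeta]|\le\eps h_1$ for all $\eps>0$, hence $w=P^s_\Omega[\zeta]$. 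Second, your one-line justification that ``the cancellation $\Ds h_1=0$ shows that $\mu$ is no more singular than $\delta^{-(2-2s)+\eps}$'' so that $\mu$ obeys \eqref{hypo} is not rigorous as stated, because $\Ds$ is nonlocal and $\Ds(\zeta h_1)\ne\zeta\Ds h_1$; the paper itself is not explicit on this point either (it only records $\mu\in C^\eps_{loc}(\Omega)$ from Lemma \ref{u-to-lapl} and takes the weak formulation for granted), so this is a gap shared with the source rather than introduced by you, but it should be flagged. Finally, a small terminological slip: the averaged boundary convergence for $G^s_\Omega[\mu]/h_1$ and the approximate-identity property of $P^s_\Omega$ are Theorems \ref{bound-g} and \ref{bound-l1} of Section \ref{bb-sect}, not of Section \ref{green-sect}; and the pointwise limit in item (2) for the Green part follows from the boundedness of $G^s_\Omega[\mu]$ for $\mu\in L^\infty$ (so that $G^s_\Omega[\mu]/h_1\to0$), not from an upgrade of the averaged trace.
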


We present some facts about harmonic functions in Section \ref{harm-sect} with an eye kept on their singular boundary trace in Section \ref{bb-sect}.

We prove the well-posedness of \eqref{prob} in Section \ref{dir-sect}, namely
\begin{theo}\label{point} Given two Radon measures $\mu\in\mathcal{M}(\Omega)$ and $\zeta\in\mathcal{M}(\partial\Omega)$
such that \eqref{hypo} holds, there exists a unique function $u\in L^1_{loc}(\Omega)$ 
which is a weak solution to \eqref{prob}. 
Moreover, for a.e. $x\in\Omega$,
\begin{equation}\label{repr}
u(x)\ =\ \int_\Omega G^s_\Omega(x,y)\;d\mu(y)+
\int_{\partial\Omega}P_\Omega^s(x,y)\;d\zeta(y)
\end{equation}
and
\begin{equation}\label{cont}
\|u\|_{L^1(\Omega,\delta(x) dx)}\ \leq\ C(\Omega,N,s)\left(\|\delta\mu\|_{\mathcal{M}(\Omega)}+\|\zeta\|_{\mathcal{M}(\partial\Omega)}\right).
\end{equation}
In addition, the following  estimates hold.
\begin{align}
\|u\|_{L^p(\Omega,\delta(x) dx)}\ &\leq\ C_1\|\delta\mu\|_{\mathcal{M}(\Omega)}&\qquad\text{if $\zeta=0$ and $p\in\left[1,\frac{N+1}{N+1-2s}\right)$}\label{lp}\\
\|u\|_{C^\alpha(\overline\omega)}\ &\leq\ C_2\left(\|\mu\|_{L^\infty(\omega)}+\|\zeta\|_{\mathcal M(\partial\Omega)}\right)&\qquad\text{if $\omega\subset\subset\Omega$ and $\alpha\in(0,2s)$}\label{calpha}\\
\|u\|_{C^{2s+\alpha}(\overline\omega)}\ &\leq\ C_3\left(\|\mu\|_{C^\alpha(\overline\omega)}+\|\zeta\|_{\mathcal M(\partial\Omega)}\right)&\qquad\text{if $\omega\subset\subset\Omega$ and $2s+\alpha\not\in\N$.}\label{ctwosplusalpha}
\end{align}
In the above $C_1=C_1(\Omega,N,s,p)$, $C_2=C_2(\Omega,\omega,N,s,\alpha)$, $C_3=C_3(\Omega,\omega,N,s,\alpha)$.
\end{theo}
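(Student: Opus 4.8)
The plan is to establish existence and the representation formula \eqref{repr} first, then derive uniqueness, and finally obtain the regularity estimates as a consequence. Define $u$ by the right-hand side of \eqref{repr}, i.e. $u = \mathbb{G}\mu + \mathbb{P}\zeta$ where $\mathbb{G}\mu(x) = \int_\Omega G^s_\Omega(x,y)\,d\mu(y)$ and $\mathbb{P}\zeta(x) = \int_{\partial\Omega}P^s_\Omega(x,y)\,d\zeta(y)$. The first task is to show $u \in L^1(\Omega,\delta\,dx)$ with the bound \eqref{cont}. By Tonelli this reduces to the kernel estimates $\int_\Omega G^s_\Omega(x,y)\,\delta(x)\,dx \le C\,\delta(y)$ and $\int_\Omega P^s_\Omega(x,y)\,\delta(x)\,dx \le C$, uniformly in $y$; these follow from the sharp heat-kernel bounds \eqref{hkb} together with the definitions \eqref{green}, \eqref{poisson}, and I expect them to be recorded in Section \ref{green-sect}. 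Next, to verify that $u$ is a weak solution, take $\psi \in \T(\Omega)$, write $\psi = \rest^{-s}\varphi$ with $\varphi \in C^\infty_c(\Omega)$, and compute $\int_\Omega u\,\Ds\psi = \int_\Omega u\,\varphi$. Substituting the definition of $u$ and using Fubini (justified by the integrability just proved, since $\varphi$ is compactly supported hence dominated by a multiple of $\delta$), one is left to check the two scalar identities
\[
\int_\Omega G^s_\Omega(x,y)\,\varphi(x)\,dx = \psi(y) \quad\text{and}\quad \int_\Omega P^s_\Omega(x,y)\,\varphi(x)\,dx = -\frac{\partial\psi}{\partial\nu}(y),
\]
for $y \in \Omega$ and $y \in \partial\Omega$ respectively; the first is the statement that $\mathbb{G}$ inverts $\Ds$, and the second is a Green-type identity relating the Poisson kernel to the normal derivative of the test function. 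Both should be available from the material announced for Section \ref{green-sect}, in particular Lemma \ref{lemma8}.

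For uniqueness, suppose $u \in L^1_{loc}(\Omega)$ is a weak solution with $\mu = 0$ and $\zeta = 0$, so $\int_\Omega u\,\Ds\psi = 0$ for all $\psi \in \T(\Omega)$. Since $\Ds : \T(\Omega) \to C^\infty_c(\Omega)$ is onto by definition of $\T(\Omega)$, this says $\int_\Omega u\,\varphi = 0$ for every $\varphi \in C^\infty_c(\Omega)$; by Part 1 of Lemma \ref{clas-weak} we already know $u \in L^1(\Omega,\delta\,dx)$, so $u$ is a genuine locally integrable function and the vanishing of all these integrals forces $u = 0$ a.e. Thus the weak solution is unique and must coincide with the $u$ constructed above, which also proves \eqref{repr}.

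It remains to prove the quantitative estimates \eqref{lp}, \eqref{calpha}, \eqref{ctwosplusalpha}. For \eqref{lp} (with $\zeta = 0$), the idea is a Marcinkiewicz/weak-$L^p$ argument: one shows the Green operator $\mu \mapsto \mathbb{G}\mu$ maps $\mathcal{M}(\Omega;\delta\,dx)$ boundedly into the weak Lebesgue space $L^{q,\infty}(\Omega,\delta\,dx)$ for $q = \frac{N+1}{N+1-2s}$, by estimating $|\{x : \delta(x)|\mathbb{G}\mu(x)| > \lambda\}|$ using the pointwise bound on $\int G^s_\Omega(x,y)\delta(x)\,dx$ and the layer-cake formula; then real interpolation gives the strong $L^p$ bound for every $p < q$. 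The exponent $\frac{N+1}{N+1-2s}$ is exactly what the scaling of $G^s_\Omega$ near the diagonal produces (the ``$N+1$'' reflecting that the relevant measure is $\delta\,dx$, effectively one extra dimension). For the interior estimates \eqref{calpha} and \eqref{ctwosplusalpha}, I would localize: fix $\omega \Subset \omega' \Subset \Omega$, split $\mu = \mu\mathbf{1}_{\omega'} + \mu\mathbf{1}_{\Omega\setminus\omega'}$ and similarly handle the boundary term; the far-field contributions $\mathbb{G}(\mu\mathbf{1}_{\Omega\setminus\omega'})$ and $\mathbb{P}\zeta$ are smooth on $\omega$ because the kernels $G^s_\Omega(x,\cdot)$, $P^s_\Omega(x,\cdot)$ are smooth away from the diagonal and the boundary, with derivatives controlled by $\|\delta\mu\|_{\mathcal{M}}$ and $\|\zeta\|_{\mathcal{M}}$, while the near-field piece is handled by interior Schauder/regularity estimates for $\Ds$ applied to the equation $\Ds u = \mu$ on $\omega'$, using the representation \eqref{As2} to make sense of the operator pointwise. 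The main obstacle, I expect, is \eqref{lp}: getting the \emph{sharp} range of $p$ requires the precise behaviour of $G^s_\Omega$ both near the diagonal and near the boundary, and carefully tracking how the weight $\delta\,dx$ interacts with the weak-type estimate; the $C^\alpha$ and $C^{2s+\alpha}$ bounds are more routine once the standard interior regularity theory for the spectral fractional Laplacian (equivalently, for the integro-differential form \eqref{As2}) is invoked.
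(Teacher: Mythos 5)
Your overall architecture matches the paper's: define $u$ by the representation formula, check $u\in L^1(\Omega,\delta\,dx)$ by Tonelli and kernel bounds, verify the weak formulation via Fubini together with Lemma \ref{inverse} and the Green-type identity \eqref{dertest}, and then invoke the earlier interior-regularity machinery (Lemma \ref{clas-harm}) for \eqref{calpha}--\eqref{ctwosplusalpha}. Where you deviate is at two points, and both deviations are legitimate. For uniqueness, the paper reduces to the maximum principle for weak solutions (Lemma \ref{maxprinc-weak}), which is itself proved by testing against nonnegative $f\in C^\infty_c(\Omega)$ and using the sign of $\psi=\Dms f$ and $-\partial_\nu\psi$; you instead go straight to the fundamental lemma of the calculus of variations: since $\Ds$ maps $\T(\Omega)$ onto $C^\infty_c(\Omega)$, vanishing of all $\int u\,\Ds\psi$ forces $\int u\,\varphi=0$ for every $\varphi\in C^\infty_c(\Omega)$, hence $u=0$ a.e.\ already from $u\in L^1_{loc}$. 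This is cleaner and does not require decomposing the data into positive and negative parts. One caveat: you invoke Part 1 of Lemma \ref{clas-weak} to conclude $u\in L^1(\Omega,\delta\,dx)$, but that lemma is proved in the paper \emph{using} Theorem \ref{point} (its proof cites \eqref{cont}), so the reference is circular as stated; fortunately it is also unnecessary — $u\in L^1_{loc}$ is all you need, since $\Ds\psi$ is compactly supported. For \eqref{lp}, the paper's Proposition \ref{Lp-reg} gives the strong $L^p$ bound directly by Jensen's inequality against the kernel and an explicit flat-boundary computation; your weak-$L^{q,\infty}$ estimate for the critical exponent $q=\frac{N+1}{N+1-2s}$ followed by the embedding $L^{q,\infty}(\Omega,\delta\,dx)\hookrightarrow L^p(\Omega,\delta\,dx)$ on the finite measure space (rather than genuine two-endpoint Marcinkiewicz interpolation, which is how you should phrase it) would also give every $p<q$, but you would still need essentially the same kernel analysis to establish the weak-type bound, so nothing is saved; the paper's route is the more economical one here. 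Your localization into near- and far-field pieces for the interior estimates is sound but superfluous: Lemma \ref{clas-harm} already controls $C^\alpha$ and $C^{2s+\alpha}$ norms on compacta in terms of local data and the global $L^1(\Omega,\delta\,dx)$-norm, so it can be applied to $u$ directly once \eqref{cont} is in hand.
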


In Section \ref{nonlin-sect} we solve nonlinear Dirichlet problems, by proving
\begin{theo}\label{nonhom-cor} Let $g(x,t):\Omega\times\R^+\longrightarrow\R^+$ be a Carath\'eodory function and $h:\R^+\to\R^+$ a nondecreasing function such that $g(x,0)=0$ and for a.e. $x\in\Omega$ and all $t>0$,
\[
0\le g(x,t)\le h(t) \qquad\text{where}\qquad  h(\delta^{-(2-2s)})\delta\in L^1(\Omega)
\]
Then, problem
\begin{equation}\label{nonhom-prob}
\left\lbrace\begin{aligned}
\Ds u &=\; -g(x,u) & \hbox{ in }\Omega \\
\frac{u}{h_1} &=\; \zeta & \hbox{ on }\partial\Omega
\end{aligned}\right.
\end{equation}
has a solution $u\in L^1(\Omega,\delta(x)dx)$
for any $\zeta\in C(\partial\Omega), \zeta\ge0$. In addition, if $t\mapsto g(x,t)$ is nondecreasing then the solution is unique.
\end{theo}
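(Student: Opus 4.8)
The plan is to run a sub- and supersolution scheme using the linear theory of Theorem~\ref{point} as the solution operator. First I would recast \eqref{nonhom-prob} as a fixed-point problem: for $v\in L^1(\Omega,\delta(x)dx)$ with $0\le v$, let $T v$ be the unique weak solution (given by Theorem~\ref{point}) of
\[
\Ds (Tv) = -g(x,v) \text{ in }\Omega,\qquad \frac{Tv}{h_1}=\zeta \text{ on }\partial\Omega.
\]
This is well-defined because $0\le g(x,v(x))\le h(v(x))$ and, once we know $0\le v\le C h_1\le C'\delta^{-(2-2s)}$ (the bound from the supersolution below), the monotonicity of $h$ gives $g(x,v)\le h(C'\delta^{-(2-2s)})$, which lies in $L^1(\Omega,\delta\,dx)$ by hypothesis; hence the measure $\mu=-g(x,v)$ satisfies \eqref{hypo}. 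The natural ordered pair of sub/supersolutions is $\underline u := \int_{\partial\Omega}P^s_\Omega(\cdot,y)\,d\zeta(y)$, the harmonic extension of $\zeta$ (which solves the problem with $g\equiv 0$, hence is a supersolution since $-g\le 0$), and $\underline{\underline u}:=0$ as a subsolution (here $\Ds 0 = 0 \ge -g(x,0)=0$ and $0/h_1 = 0\le \zeta$). So one expects $0\le u\le \underline u$ and in particular $u$ inherits the blow-up rate $\delta^{-(2-2s)}$ from $h_1$.

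The key steps, in order: (i) establish monotonicity of the solution operator on the relevant order interval — if $0\le v_1\le v_2\le C h_1$ then, \emph{provided} $t\mapsto g(x,t)$ is nondecreasing, $-g(x,v_1)\ge -g(x,v_2)$ as measures and the representation \eqref{repr} together with positivity of $G^s_\Omega$ yields $Tv_1\ge Tv_2$; (ii) check that $T$ maps the order interval $[0,\underline u]$ into itself: $Tv\ge 0$ because $-g(x,v)\le 0$... wait, one must be careful — $T v = \int G^s_\Omega(\cdot,y)(-g(x,v))\,dy + \underline u \le \underline u$ since the first term is $\le 0$, and $Tv \ge 0$ needs $\int G^s_\Omega(-g)\ge -\underline u$, i.e. $\int_\Omega G^s_\Omega(x,y) g(y,v(y))\,dy \le \underline u(x)$; this is exactly the statement that the subsolution/supersolution are ordered and can be obtained by noting that $0$ is a subsolution and invoking the comparison principle for the linear problem (monotonicity of $T$ composed with the ordering $0\le v$); (iii) construct the solution by monotone iteration $u_0=\underline u$, $u_{n+1}=Tu_n$, which by (i)–(ii) is monotone decreasing and bounded below by $0$, hence converges pointwise a.e. to some $u$; (iv) pass to the limit — dominated convergence in $L^1(\Omega,\delta\,dx)$ works because $0\le g(x,u_n)\le h(C'\delta^{-(2-2s)})\in L^1(\Omega,\delta\,dx)$ uniformly in $n$, and the Carath\'eodory property gives $g(x,u_n)\to g(x,u)$ a.e.; the continuity estimate \eqref{cont} then lets one pass to the limit in the weak formulation \eqref{byparts}. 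Finally (v), for uniqueness when $g(x,\cdot)$ is nondecreasing: if $u,\tilde u$ are two solutions, test the equation for $u-\tilde u$ against a suitable $\psi\in\T(\Omega)$ (or use Kato's inequality / the comparison principle for the linear operator) to get $\int_\Omega (u-\tilde u)\,\Ds\psi = -\int_\Omega (g(x,u)-g(x,\tilde u))\psi$; choosing $\psi\ge 0$ with $\Ds\psi$ of a sign and using monotonicity of $g$ forces $u=\tilde u$.

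The main obstacle I anticipate is step (ii), specifically verifying that the iterates stay in the order interval and, more delicately, that the subsolution $0$ and supersolution $\underline u$ genuinely \emph{bracket} a solution in the weak-$L^1$ framework — the subtlety being that the linear comparison principle must be applied to measures $\mu$ satisfying only \eqref{hypo}, and one must be sure that $g(x,v)\delta\in L^1(\Omega)$ is preserved along the iteration. This is where the hypothesis $h(\delta^{-(2-2s)})\delta\in L^1(\Omega)$ is used crucially, in combination with the sharp bound $h_1\asymp \delta^{-(2-2s)}$ (from the discussion following \eqref{h1}, and to be proved in Section~\ref{bb-sect}) and the monotonicity of $h$. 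A secondary technical point is that we do not assume $g(x,\cdot)$ monotone for existence (only for uniqueness), so for existence the monotone iteration in step (i) should instead be set up via the dominated pair $(0,\underline u)$ directly: define $u_{n+1}$ as the solution of the linear problem with right-hand side $-g(x,u_n)$ starting from $u_0 = 0$ — but then monotonicity of the sequence is not automatic without monotonicity of $g$, so one should instead use a Schauder-type fixed-point argument (compactness of $T$ on $L^1(\Omega,\delta\,dx)$, which follows from \eqref{calpha}–\eqref{ctwosplusalpha} giving interior compactness plus the uniform tail bound near $\partial\Omega$ from the domination) on the closed convex bounded set $\{v: 0\le v\le \underline u\}$, reserving the monotone-iteration argument for the case when $g$ is nondecreasing, which also delivers uniqueness.
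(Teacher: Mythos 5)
Your overall plan matches the paper's: exhibit the ordered pair $(0,\mathbb P_\Omega^s\zeta)$ as sub- and supersolutions, verify the $L^1(\Omega,\delta\,dx)$ integrability of the right-hand side via $h_1\asymp\delta^{-(2-2s)}$ and the hypothesis $h(\delta^{-(2-2s)})\delta\in L^1(\Omega)$, then invoke a Schauder argument plus Kato's inequality, with uniqueness following from Kato when $g(x,\cdot)$ is nondecreasing. (The paper first substitutes $v=\mathbb P_\Omega^s\zeta-u$ to reduce to homogeneous boundary conditions and then applies the already-proven Theorem \ref{exist}; you keep the boundary condition, which is a cosmetic difference. Your notational swap of $\underline u$ / $\overline u$ is also cosmetic.)

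However, there is a genuine gap in your step (ii), and you half-acknowledge it but propose a fix that does not work. You want to run Schauder on the closed convex set $\{v:0\le v\le\mathbb P_\Omega^s\zeta\}$, which requires that $T$ maps this set into itself. The inequality $Tv\le\mathbb P_\Omega^s\zeta$ is fine since $-g\le 0$; but $Tv\ge 0$ amounts to $\mathbb G_\Omega^s[g(\cdot,v)]\le\mathbb P_\Omega^s\zeta$, and there is no reason this should hold: the hypotheses give only $g(x,v)\le h(C\delta^{-(2-2s)})\in L^1(\Omega,\delta\,dx)$, not any pointwise domination by $h_1$. Your appeal to ``the comparison principle for the linear problem'' compares $Tv_1$ with $Tv_2$ from an ordering $g(\cdot,v_1)\ge g(\cdot,v_2)$, which in the non-monotone case you cannot deduce from $v_1\le v_2$; it does not yield order-interval invariance. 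The device the paper uses (following Montenegro--Ponce, in the proof of Theorem \ref{exist}) is the truncated nonlinearity $F(x,t)=f(x,[t\wedge\overline u(x)]\vee\underline u(x))$: one applies Schauder to $v\mapsto\mathbb G_\Omega^s F(\cdot,v)$ on all of $L^1(\Omega,\delta\,dx)$ (no invariance of the order interval needed), and only \emph{a posteriori}, via Kato's inequality applied to $(u-\overline u)^+$ and $(\underline u-u)^+$, does one prove that the fixed point actually lies in $[\underline u,\overline u]$, so that $F$ and $f$ coincide there. Without this truncation your fixed-point step does not close in the non-monotone case, which is precisely the case where you cannot fall back on monotone iteration.
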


Finally, with Section \ref{extra-sect} we prove

\begin{theo}\label{xlarge-theo} Let 
\[
p\in\left(1+s,\frac1{1-s}\right).
\]
Then, there exists a function $u\in L^1(\Omega,\delta(x)dx)\cap C^\infty(\Omega)$ solving
\begin{equation}\label{large-prob}
\left\lbrace\begin{aligned}
\Ds u &=\; -u^p & &\hbox{ in }\Omega, \\
\frac{u}{h_1} &=\; +\infty & & \hbox{ on }\partial\Omega
\end{aligned}\right.
\end{equation}
in the following sense:
the first equality holds pointwise and in the sense of distributions, the boundary condition is understood as a pointwise limit. In addition, there exists a constant $C=C(\Omega, N,s,p)$ such that
\[
0\le u\le C\delta^{-\frac{2s}{p-1}}.
\]
\end{theo}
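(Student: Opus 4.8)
The plan is to construct the large solution as the increasing limit of the solutions $u_k$ to the nonlinear problem \eqref{nonhom-prob} with boundary data $\zeta=k$. Concretely, for each $k\in\N$, Theorem~\ref{nonhom-cor} (applied with $g(x,t)=t^p$, which satisfies the growth hypothesis precisely because $p<1/(1-s)$ forces $\delta^{-(2-2s)p}\delta\in L^1(\Omega)$) produces a unique nonnegative weak solution $u_k$ of $\Ds u_k=-u_k^p$ in $\Omega$ with $u_k/h_1=k$ on $\partial\Omega$. By uniqueness and the sub-/supersolution comparison underlying Theorem~\ref{nonhom-cor}, the sequence $(u_k)$ is nondecreasing in $k$. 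So the candidate large solution is $u:=\lim_{k\to\infty}u_k=\sup_k u_k$, and everything reduces to showing this limit is finite, nontrivial, solves the equation, and blows up at the boundary.

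The heart of the argument is the a priori bound $u_k\le C\delta^{-2s/(p-1)}$ with $C$ independent of $k$. For this I would build an explicit supersolution of the form $\overline u=C\delta^{-2s/(p-1)}$ (suitably modified near the interior so that it is smooth and finite there — e.g. $\overline u = C\,w$ where $w$ behaves like $\delta^{-2s/(p-1)}$ near $\partial\Omega$ and is a fixed positive smooth function away from it). The key computation is that for the spectral fractional Laplacian, $\Ds(\delta^{-\beta})$ with $\beta=2s/(p-1)$ behaves near the boundary like a constant times $\delta^{-\beta-2s}=\delta^{-\beta p}$; the scale-invariance of the exponent $2s/(p-1)$ is exactly what makes $-\overline u^p=-C^p\delta^{-\beta p}$ match the leading boundary order of $\Ds\overline u$, so that choosing $C$ large (using $p>1$) yields $\Ds\overline u\le -\overline u^p$ near $\partial\Omega$. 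Here the lower restriction $p>1+s$ enters: it guarantees $\beta=2s/(p-1)<2$, which keeps $\delta^{-\beta}$ integrable against $\delta\,dx$ and, more importantly, ensures $\overline u$ dominates the harmonic profile $h_1\sim\delta^{-(2-2s)}$ is *not* required — rather $p>1+s$ is what makes the singular-boundary computation for $\Ds\delta^{-\beta}$ valid and the remainder terms lower order. I would then invoke the comparison principle (weak maximum principle for $\Ds$, available from the representation formula and positivity of $G_\Omega^s$, $P_\Omega^s$) to conclude $u_k\le\overline u$ for all $k$, hence $u\le C\delta^{-2s/(p-1)}$ pointwise. I expect this supersolution estimate — getting the near-boundary asymptotics of $\Ds$ applied to a power of $\delta$, with honest control of the error terms coming from the killing measure $\kappa$ and the curvature of $\partial\Omega$ — to be the main technical obstacle.

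With the uniform bound in hand, the passage to the limit is routine: monotone convergence gives $u\in L^1(\Omega,\delta\,dx)$ with $\|u\|_{L^1(\Omega,\delta dx)}\le C$, and the same convergence applied to the weak formulation \eqref{byparts}, together with $0\le u_k^p\le\overline u^p\in L^1(\Omega,\delta dx)$, shows $u$ solves $\Ds u=-u^p$ in the sense of distributions (using the representation \eqref{repr} to identify $u=\lim_k\big(\int_\Omega G_\Omega^s(\cdot,y)(-u_k^p)\,dy+k\,h_1\big)$); interior elliptic regularity \eqref{ctwosplusalpha} then bootstraps $u\in C^\infty(\Omega)$ and makes the equation hold pointwise. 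Finally, for the boundary blow-up I would use the lower bound: since $u\ge u_k$ and $u_k/h_1\to k$ at $\partial\Omega$ for each fixed $k$, we get $\liminf_{x\to x_0}u(x)/h_1(x)\ge k$ for every $k$, whence $u/h_1\to+\infty$ at every boundary point. A brief check that $u\not\equiv0$ — immediate since $u\ge u_1>0$ — completes the proof.
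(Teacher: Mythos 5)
Your proposal matches the paper's argument step for step: approximation by $u_j$ with boundary data $j$ via Theorem~\ref{nonhom-cor} (which needs $p<1/(1-s)$), monotonicity of the sequence, a uniform upper bound via an explicit supersolution behaving like $\delta^{-2s/(p-1)}$ and modified away from the boundary, passage to the limit using $p>1+s$ for integrability against $\delta\,dx$, interior elliptic regularity for smoothness and pointwise validity, and the lower bound $u\ge u_j$ for the boundary blow-up. One sign slip worth fixing: a supersolution of $\Ds u=-u^p$ should satisfy $\Ds\overline u\ge-\overline u^p$ (not $\le$); moreover the paper proves only the one-sided bound $\Ds\delta^{-\alpha}\ge-C\delta^{-\alpha p}$, which is exactly what the comparison argument requires, rather than the two-sided asymptotic your write-up suggests.
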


\begin{rmk}\rm
If one replaces the spectral fractional Laplacian by the classical fractional Laplacian ${(-\lapl)}^s$, then an analogous of Theorem \ref{xlarge-theo} holds for
\[
p\in\left(1+2s,\frac{1+s}{1-s}\right),
\]
see \cite[Theorem 3 and Example 22]{a2}. We suspect that both exponent ranges are optimal.
\end{rmk}

\section{Green function and Poisson kernel}\label{green-sect}
In the following three lemmas\footnote{which hold even if the domain $\Omega$ is not $C^{1,1}$.}, we establish some useful identities for the Green function defined by \eqref{green}.
\begin{lem}\label{inverse}{\rm (\cite[formula (17)]{grss})}
Let $f\in L^2(\Omega)$. For almost every $x\in\Omega$, $G_\Omega^s(x,\cdot)f\in L^1(\Omega)$ and
\[
\rest^{-s}f(x) = \int_\Omega G_\Omega^s(x,y)f(y)\;dy\qquad \text{for a.e. $x\in\Omega$}.
\]
%Moreover $\rest^{-s}f\in H(2s)$.
\end{lem}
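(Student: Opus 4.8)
The plan is to show directly that the function $v(x):=\int_\Omega G_\Omega^s(x,y)f(y)\,dy$ is well defined for a.e.\ $x\in\Omega$, belongs to $L^2(\Omega)$, and satisfies $\widehat v_j=\lambda_j^{-s}\widehat f_j$ for every $j\in\N$; since $\{\varphi_j\}$ is a Hilbert basis, this forces $v=\Dms f$ in $L^2(\Omega)$, hence a.e., which is precisely the asserted representation formula. The argument rests on two elementary heat-kernel estimates. From the spectral expansion $p_\Omega(t,x,y)=\sum_j e^{-\lambda_j t}\varphi_j(x)\varphi_j(y)$ and the domain monotonicity $0\le p_\Omega(t,x,y)\le (4\pi t)^{-N/2}$, the Cauchy--Schwarz inequality gives $p_\Omega(t,x,y)\le p_\Omega(t,x,x)^{1/2}p_\Omega(t,y,y)^{1/2}$, while $p_\Omega(t,x,x)=\sum_j e^{-\lambda_j t}\varphi_j(x)^2\le e^{-\lambda_1(t-1)}p_\Omega(1,x,x)\le e^{-\lambda_1(t-1)}(4\pi)^{-N/2}$ for $t\ge1$; hence $p_\Omega(t,x,y)\le C_0e^{-\lambda_1 t}$ for $t\ge1$, with $C_0=C_0(N,\Omega)$. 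Combined with $\int_\Omega p_\Omega(t,x,y)\,dy\le1$ and Tonelli, this yields the Schur-type bound
\[
\sup_{x\in\Omega}\int_\Omega G_\Omega^s(x,y)\,dy\ =\ \sup_{x\in\Omega}\frac1{\Gamma(s)}\int_0^\infty t^{s-1}\Big(\int_\Omega p_\Omega(t,x,y)\,dy\Big)\,dt\ \le\ C(\Omega,N,s)<\infty,
\]
and, by the symmetry $p_\Omega(t,x,y)=p_\Omega(t,y,x)$, the same bound with the roles of $x$ and $y$ interchanged. The same expansion also gives $\varphi_j\in L^\infty(\Omega)$, since $|\varphi_j(x)|=e^{\lambda_j}\big|\int_\Omega p_\Omega(1,x,y)\varphi_j(y)\,dy\big|\le e^{\lambda_j}\big(\int_\Omega p_\Omega(1,x,y)^2\,dy\big)^{1/2}=e^{\lambda_j}p_\Omega(2,x,x)^{1/2}\le e^{\lambda_j}(8\pi)^{-N/4}$.

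With these in hand, the two integrability statements are immediate. By Tonelli and the Schur bound,
\[
\int_\Omega\Big(\int_\Omega G_\Omega^s(x,y)|f(y)|\,dy\Big)dx=\int_\Omega|f(y)|\Big(\int_\Omega G_\Omega^s(x,y)\,dx\Big)dy\le C\,\|f\|_{L^1(\Omega)}<\infty,
\]
so $G_\Omega^s(x,\cdot)f\in L^1(\Omega)$ for a.e.\ $x$ --- the first assertion of the lemma --- and $v$ is defined a.e.\ with $v\in L^1(\Omega)$; moreover the two bounds from the first paragraph and the classical Schur test show that $g\mapsto\int_\Omega G_\Omega^s(\cdot,y)g(y)\,dy$ is bounded on $L^2(\Omega)$, so $v\in L^2(\Omega)$.

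Finally I would compute the Fourier coefficients of $v$. Fix $j$. Because $\varphi_j\in L^\infty$ and $\int_\Omega\int_\Omega G_\Omega^s(x,y)|f(y)|\,dx\,dy<\infty$, Fubini gives $\widehat v_j=\int_\Omega f(y)\big(\int_\Omega G_\Omega^s(x,y)\varphi_j(x)\,dx\big)\,dy$. In the inner integral I substitute the definition of $G_\Omega^s$ and apply Fubini once more --- legitimate since $\int_0^\infty t^{s-1}\big(\int_\Omega p_\Omega(t,x,y)|\varphi_j(x)|\,dx\big)\,dt<\infty$, the inner integral being $\le\|\varphi_j\|_{L^\infty}$ for every $t>0$ and $\le C_0e^{-\lambda_1 t}\|\varphi_j\|_{L^1(\Omega)}$ for $t\ge1$ --- and then use $\int_\Omega p_\Omega(t,x,y)\varphi_j(x)\,dx=e^{-\lambda_j t}\varphi_j(y)$ together with $\int_0^\infty t^{s-1}e^{-\lambda t}\,dt=\Gamma(s)\lambda^{-s}$ to obtain
\[
\int_\Omega G_\Omega^s(x,y)\varphi_j(x)\,dx=\frac1{\Gamma(s)}\int_0^\infty t^{s-1}e^{-\lambda_j t}\varphi_j(y)\,dt=\lambda_j^{-s}\varphi_j(y).
\]
Hence $\widehat v_j=\lambda_j^{-s}\int_\Omega f\,\varphi_j=\lambda_j^{-s}\widehat f_j$ for all $j$, and since $v\in L^2(\Omega)$ and $\{\varphi_j\}$ is a Hilbert basis, $v=\sum_j\lambda_j^{-s}\widehat f_j\varphi_j=\Dms f$ in $L^2(\Omega)$, hence a.e.\ in $\Omega$. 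I do not expect a genuine obstacle here: the only mildly technical ingredient is the repeated use of Fubini/Tonelli, and every instance is controlled by the single Schur-type estimate of the first paragraph together with the boundedness of the eigenfunctions.
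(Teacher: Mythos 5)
Your proof is correct, but it passes from eigenfunctions to general $f\in L^2(\Omega)$ by a genuinely different mechanism than the paper. Both arguments share the core computation $\int_\Omega G_\Omega^s(x,y)\varphi_j(y)\,dy=\lambda_j^{-s}\varphi_j(x)$ via $\int_0^\infty t^{s-1}e^{-\lambda_j t}\,dt=\Gamma(s)\lambda_j^{-s}$. The paper then records the isometry \eqref{riesz-id} on finite spectral sums, extends $\mathbb G_\Omega^s$ abstractly by density, and must afterwards verify that this abstract extension is still given by the pointwise integral: first for $\psi\in C^\infty_c(\Omega)$ via uniform convergence of the spectral partial sums (which requires the eigenfunction bounds \eqref{esti-ef} and the fast decay \eqref{spectral-coef}), and then for general $f$ by Fatou's lemma. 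You bypass this two-stage approximation entirely: your Schur-type bound $\sup_x\int_\Omega G_\Omega^s(x,y)\,dy<\infty$ (from $\int_\Omega p_\Omega(t,x,\cdot)\le1$ for small $t$ and the spectral-gap decay $p_\Omega(t,x,y)\le C_0e^{-\lambda_1 t}$ for $t\ge1$) shows at the outset that the integral operator is well defined a.e., maps $L^2$ into $L^2$, and dominates all the subsequent applications of Fubini, after which a single computation of the Fourier coefficients closes the argument. Your route is more self-contained and arguably cleaner for the statement at hand; the paper's route has the side benefit of establishing \eqref{riesz-id}, \eqref{esti-ef} and \eqref{spectral-coef} along the way, which are reused repeatedly later (e.g. in Lemma \ref{As-smooth} and in the proof of \eqref{adg}). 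The one ingredient you should make explicit is the justification of $p_\Omega(t,x,x)=\sum_j e^{-\lambda_j t}\varphi_j(x)^2$ (equivalently, the semigroup identity $p_\Omega(t,x,x)=\|p_\Omega(t/2,x,\cdot)\|_{L^2(\Omega)}^2$), which underlies your decay estimate for $t\ge1$; for a bounded $C^{1,1}$ domain this is standard.
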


\begin{proof} If $\varphi_j$ is an eigenfunction of $\left.-\lapl\right\vert_\Omega$, then
\begin{align*}
& \int_\Omega G_\Omega^s(x,y)\varphi_j(y)\; dy \ =\ \int_0^\infty\frac{t^{s-1}}{\Gamma(s)}\int_\Omega p_\Omega(t,x,y)\varphi_j(y)\;dy\;dt \\
& =\ \int_0^\infty\frac{t^{s-1}}{\Gamma(s)}\,e^{-\lambda_jt}\varphi_j(x)\;dt\ =\ \frac{\lambda_j^{-s}}{\Gamma(s)}\varphi_j(x)\int_0^\infty t^{s-1}e^{-t}\;dt
\ =\ \lambda_j^{-s}\varphi_j(x)=\Dms\varphi_j(x)
\end{align*}
By linearity, if $f\in L^2(\Omega)$ is a linear combination of eigenvectors  $f=\sum_{j=1}^M \widehat f_j\varphi_j$, then 
\[
\int_\Omega G_\Omega^s(x,y)\sum_{j=1}^M \widehat f_j\varphi_j(y)\; dy\ =\ \sum_{j=1}^M \widehat f_j\lambda_j^{-s}\varphi_j(x).
\]
and so, letting 
\begin{equation}\label{identity}\mathbb{G}^s_\Omega f\ :=\ \int_\Omega G_\Omega^s(\cdot,y)\,f(y)\;dy,
\end{equation}
we have
\begin{equation}\label{riesz-id}
\|\mathbb{G}^s_\Omega f\|_{H(2s)}^2=\sum_{j=1}^M\lambda_j^{2s}\cdot \vert\widehat f_j\vert^2\lambda_j^{-2s} = \|f\|_{L^2(\Omega)}^2.
\end{equation}
Thus the map $\mathbb{G}_\Omega^s: f\longmapsto \mathbb G^s_\Omega f$
uniquely extends to a linear isometry from $L^2(\Omega)$ to $H(2s)$, 
which coincides with $\Dms$. It remains to prove that the identity \eqref{identity} remains valid a.e. for $f\in L^2(\Omega)$.
By standard parabolic theory, the function $(t,x)\mapsto \int_\Omega p_\Omega(t,x,y)dy$ is bounded (by 1) and smooth in $[0,T]\times\omega$ for every $T>0$, $\omega\subset\subset \Omega$. Hence, for every $x\in\Omega$, $G_\Omega^s(x,\cdot)\in L^1(\Omega)$.
Assume first that $f=\psi\in C^\infty_c(\Omega)$ and take a sequence ${\{\psi_k\}}_{k\in\N}$
in the linear span of the eigenvectors
${\{\varphi_j\}}_{j\in\N}$ such that ${\{\psi_k\}}_{k\in\N}$ 
converges to $\psi$ in $L^2(\Omega)$. The convergence is in fact uniform and so \eqref{identity} holds for $f=\psi$. Indeed,
by standard elliptic regularity, there exist constants $C=C(N,\Omega), k=k(N)$ such that any eigenfunction satisfies 
\[
\|\grad\varphi_j\|_{L^\infty(\Omega)}\leq (C\lambda_j)^k \|\varphi_j\|_{L^2(\Omega)}=(C\lambda_j)^k.
\]
In particular, taking $C$ larger if needed,
\begin{equation}\label{esti-ef}
\left\|\frac{\varphi_j}{\delta}\right\|_{L^\infty(\Omega)}\leq(C\lambda_j)^k.
\end{equation}
Now write $\psi=\sum_{j=1}^{\infty}\widehat\psi_j\varphi_j$ and fix $m\in\N$. Integrating by parts $m$ times yields
\[
\widehat\psi_j=\int_\Omega\psi\varphi_j=-\frac1{\lambda_j}\int_\Omega\psi\,\lapl\varphi_j
=-\frac1{\lambda_j}\int_\Omega\lapl\psi\,\varphi_j=\ldots=\frac{(-1)^m}{\lambda_j^m}\int_\Omega\lapl^m\psi\,\varphi_j
\]
which implies that
\begin{equation}\label{spectral-coef}
|\widehat\psi_j|\leq \frac{\|\lapl^m\psi\|_{L^2(\Omega)}}{\lambda_j^m},
\end{equation}
i.e. the spectral coefficients of $\psi$ converge to $0$ faster than any polynomial. This and \eqref{esti-ef} imply that ${\{\psi_k\}}_{k\in\N}$ 
converges to $\psi$ uniformly, as claimed.

Take at last $f\in L^2$ and a sequence ${\{f_k\}}_{k\in\N}$ 
in $C^\infty_c(\Omega)$ of nonnegative functions such that ${\{f_k\}}_{k\in\N}$ 
converges to $\vert f\vert$ a.e. and in $L^2(\Omega)$. 
By \eqref{riesz-id}, 
$\|\mathbb{G}^s_\Omega f_k\|_{L^2}\le \|f_k\|_{L^2}$ 
and by  Fatou's lemma, we deduce that
$G_\Omega^s(x,\cdot)f\in L^1(\Omega)$ 
for a.e. $x\in\Omega$ and the desired identity follows.
\end{proof}

\begin{lem}{\rm (\cite[formula (8)]{grss})}
For a.e. $x,y\in\Omega$, %$x\neq y$,
\begin{equation}\label{compo}
\int_{\Omega} G^{1-s}_\Omega(x,\xi)G^s_\Omega(\xi,y)\:d\xi= G^1_\Omega(x,y)
\end{equation}
\end{lem}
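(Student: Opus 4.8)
The plan is to reduce everything to the semigroup property of the heat kernel via the subordination formula \eqref{green}. Writing out the integral representations
\[
G^s_\Omega(\xi,y)=\frac1{\Gamma(s)}\int_0^\infty p_\Omega(\tau,\xi,y)\,\tau^{s-1}\,d\tau,\qquad
G^{1-s}_\Omega(x,\xi)=\frac1{\Gamma(1-s)}\int_0^\infty p_\Omega(t,x,\xi)\,t^{-s}\,dt,
\]
and substituting them into the left-hand side of \eqref{compo}, I would obtain the triple integral
\[
\int_\Omega G^{1-s}_\Omega(x,\xi)G^s_\Omega(\xi,y)\,d\xi
=\frac1{\Gamma(1-s)\Gamma(s)}\int_\Omega\!\int_0^\infty\!\int_0^\infty p_\Omega(t,x,\xi)\,p_\Omega(\tau,\xi,y)\,t^{-s}\tau^{s-1}\,dt\,d\tau\,d\xi.
\]
Since every factor is nonnegative, Tonelli's theorem allows me to integrate in any order; carrying out the $\xi$-integration first and invoking the Chapman--Kolmogorov identity $\int_\Omega p_\Omega(t,x,\xi)p_\Omega(\tau,\xi,y)\,d\xi=p_\Omega(t+\tau,x,y)$ collapses the expression to
\[
\frac1{\Gamma(1-s)\Gamma(s)}\int_0^\infty\!\int_0^\infty p_\Omega(t+\tau,x,y)\,t^{-s}\tau^{s-1}\,dt\,d\tau.
\]

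Next I would change variables $(t,\tau)\mapsto(r,\tau)$ with $r=t+\tau$, so that for each fixed $r>0$ the variable $\tau$ ranges over $(0,r)$ and $t=r-\tau$. The inner integral is then a Beta integral,
\[
\int_0^r (r-\tau)^{-s}\tau^{s-1}\,d\tau=B(1-s,s)=\Gamma(1-s)\Gamma(s),
\]
which is independent of $r$ precisely because the two exponents sum to $-1$. Hence the double integral equals $\Gamma(1-s)\Gamma(s)\int_0^\infty p_\Omega(r,x,y)\,dr$, and after cancelling the Gamma prefactors what remains is $\int_0^\infty p_\Omega(r,x,y)\,dr$, which is exactly $G^1_\Omega(x,y)$ by \eqref{green} with $s=1$.

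The only points that need care are the measurability and finiteness required to apply Tonelli cleanly, and the a.e.\ finiteness of $G^1_\Omega(x,y)$. The latter follows from standard heat-kernel estimates: for large $t$ the eigenfunction expansion gives $p_\Omega(t,x,y)\le Ce^{-\lambda_1 t}$, while for small $t$ the Gaussian upper bound $p_\Omega(t,x,y)\le Ct^{-N/2}e^{-|x-y|^2/(ct)}$ keeps the integrand bounded for $x\ne y$; thus $G^1_\Omega(x,y)<\infty$ off the diagonal, which suffices since the claim is only asserted for a.e.\ $(x,y)$. I do not expect a genuine obstacle — the identity is just the composition law $\Dums\circ\Ds=\rest^{-1}$ read at the level of kernels, consistent with Lemma \ref{inverse} — but one should justify the interchange of the $\xi$-integral with the $t,\tau$-integrals rigorously, for instance by first truncating the time integrals to $[\eps,1/\eps]$, applying Fubini on the resulting bounded region, and then letting $\eps\downarrow0$ by monotone convergence.
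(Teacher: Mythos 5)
Your proof is correct, but it follows a genuinely different route from the paper's. The paper works at the operator level: it first notes that $\rest^{-s}\circ\rest^{s-1}=\rest^{-1}$ on $L^2(\Omega)$ (immediate on eigenfunctions), and then converts this into the kernel identity \eqref{compo} via Lemma~\ref{inverse} and Fubini's theorem, obtaining the equality only almost everywhere. You instead compute directly at the kernel level: you expand $G^s_\Omega$ and $G^{1-s}_\Omega$ by the subordination formula \eqref{green}, collapse the $\xi$-integral using the Chapman--Kolmogorov (semigroup) identity for $p_\Omega$, and then evaluate the resulting Beta integral $\int_0^r(r-\tau)^{-s}\tau^{s-1}\,d\tau=\Gamma(s)\Gamma(1-s)$ to recover $G^1_\Omega$. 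Your argument is more self-contained and elementary---it relies on neither Lemma~\ref{inverse} nor the spectral decomposition---and in fact gives the identity for every $x\neq y$, not just a.e.; the paper's approach is shorter given the machinery already in place and fits the operator-calculus style of the rest of Section~\ref{green-sect}. One small remark: the $\eps$-truncation you suggest is unnecessary, since Tonelli applies directly to nonnegative measurable integrands; the only residual concern, which you correctly dispatch via the heat kernel bounds, is to see that for $x\neq y$ both sides are finite so the identity is not vacuously $\infty=\infty$.
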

\begin{proof}
Clearly, given an eigenfunction $\varphi_j$,
\[
\rest^{-s}\rest^{s-1}\varphi_j = \lambda_j^{-s}\lambda_j^{s-1}\varphi_j = \rest^{-1}\varphi_j
\]
so $\rest^{-s}\circ\rest^{s-1}=\rest^{-1}$ in $L^2(\Omega)$. By the previous lemma and Fubini's theorem, we deduce that for $\varphi\in L^2(\Omega)$ and a.e. $x\in\Omega$,
\[
\int_{\Omega^2} G^{1-s}_\Omega(x,\xi)G^s_\Omega(\xi,y)\varphi(y)\;d\xi\:dy= \int_\Omega G^1_\Omega(x,y)\varphi(y)\; dy
\]
and so \eqref{compo} holds almost everywhere.
\end{proof}

\begin{lem} For any $\psi\in C^\infty_c(\Omega)$, 
\begin{equation}\label{adg}
\Ds\psi\ =\ (-\lapl)\circ \rest^{s-1} \psi
\ =\ \rest^{s-1}\circ(-\lapl)  \psi
\end{equation}
\end{lem}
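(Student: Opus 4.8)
The plan is to argue entirely through the spectral decomposition, reusing the elliptic estimates already established in the proof of Lemma~\ref{inverse}. Write $\psi=\sum_{j=1}^\infty\widehat\psi_j\varphi_j$; since $\psi\in C^\infty_c(\Omega)\subset H(2s)$, the bound \eqref{spectral-coef} shows that $|\widehat\psi_j|$ decays faster than any power of $\lambda_j$.

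\emph{The second identity} is the easy one. Because $\psi$ is compactly supported, integrating by parts twice produces no boundary terms, so $\widehat{(-\lapl\psi)}_j=\int_\Omega(-\lapl\psi)\,\varphi_j=\int_\Omega\psi\,(-\lapl\varphi_j)=\lambda_j\widehat\psi_j$. Moreover $-\lapl\psi\in C^\infty_c(\Omega)$, and $\Dsmu$ is a bounded operator on $L^2(\Omega)$ (its exponent $s-1$ being negative), so by \eqref{As},
\[
\Dsmu\circ(-\lapl)\,\psi=\sum_{j=1}^\infty\lambda_j^{s-1}\,\lambda_j\widehat\psi_j\,\varphi_j=\sum_{j=1}^\infty\lambda_j^{s}\widehat\psi_j\,\varphi_j=\Ds\psi,
\]
the equality holding in $L^2(\Omega)$.

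\emph{For the first identity}, set $v:=\Dsmu\psi=\sum_j\lambda_j^{s-1}\widehat\psi_j\varphi_j$ and $w:=\Ds\psi=\sum_j\lambda_j^{s}\widehat\psi_j\varphi_j$. By interior elliptic (Schauder) regularity, bootstrapping exactly as in the proof of Lemma~\ref{inverse}, for every $\omega\subset\subset\Omega$ and every $m\in\N$ there are constants $C,k$ (depending on $\omega,m,N$) with $\|\varphi_j\|_{C^m(\overline\omega)}\le(C\lambda_j)^k$. Combined with the super-polynomial decay of $\widehat\psi_j$, this shows that both series defining $v$ and $w$ converge in $C^m(\overline\omega)$ for every $m$ and every $\omega\subset\subset\Omega$; in particular $v,w\in C^\infty(\Omega)$. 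Each partial sum $v_M:=\sum_{j\le M}\lambda_j^{s-1}\widehat\psi_j\varphi_j$ is a finite linear combination of eigenfunctions, so $-\lapl v_M=\sum_{j\le M}\lambda_j^{s}\widehat\psi_j\varphi_j$. Letting $M\to\infty$ and using that $v_M\to v$ in $C^2_{loc}(\Omega)$ while $-\lapl v_M\to w$ in $C^0_{loc}(\Omega)$, we conclude $-\lapl v=w$ pointwise in $\Omega$, i.e. $(-\lapl)\circ\Dsmu\psi=\Ds\psi$.

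The only point requiring care is the locally uniform convergence of the differentiated eigenfunction series in the last paragraph; but this is a routine variant of the argument already carried out for Lemma~\ref{inverse}, so I do not anticipate a genuine obstacle.
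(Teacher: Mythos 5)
Your proposal is correct and takes essentially the same route as the paper: prove the identity term-by-term on eigenfunctions and pass to the limit using the super-polynomial decay of the spectral coefficients of a $C^\infty_c$ function, established via \eqref{spectral-coef} and the polynomial growth of eigenfunction norms. The paper's own proof is a one-line sketch saying precisely this; you have supplied the details, in particular the interior $C^m$ bounds $\|\varphi_j\|_{C^m(\overline\omega)}\le(C\lambda_j)^k$ needed to justify differentiating the series for $\rest^{s-1}\psi$ term by term, and the observation that $\rest^{s-1}$ is $L^2$-bounded so the second identity is immediate once $-\lapl\psi\in C^\infty_c(\Omega)$. No gaps.
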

\begin{proof} 
The identity clearly holds if $\psi$ is an eigenfunction. If $\psi\in C^\infty_c(\Omega)$, its spectral coefficients have fast decay and the result follows by writing the spectral decomposition of $\psi$. Indeed, 
thanks to \eqref{spectral-coef} and \eqref{esti-ef},  we may easily work by density to establish \eqref{adg}.
\end{proof}

Let us turn to the definition and properties of the Poisson kernel. 
Recall that, 
for $x\in\Omega, y\in\partial\Omega$, the Poisson kernel of the Dirichlet Laplacian is given by
\[
P_\Omega^1(x,y)\ =\ -\left.\frac{\partial}{\partial\nu_y}\right. G^1_\Omega(x,y).
\]

\begin{lem}\label{lemma8} %Assume that $\Omega$ has $C^{1,1}$ boundary. 
The function
\[
P_\Omega^s(x,y)\ :=\ -\frac{\partial}{\partial\nu_y} G^s_\Omega(x,y)
\]
is well-defined for $x\in\Omega, y\in\partial\Omega$ and 
$P_\Omega^s(x,\cdot)\in C(\partial\Omega)$ for any $x\in\Omega$.
Furthermore, there exists a constant $C>0$ depending on $N,s,\Omega$ only such that
\begin{equation}\label{poissbound}
\frac1C\frac{\delta(x)}{|x-y|^{N+2-2s}}\le P^s_\Omega(x,y)\le C\frac{\delta(x)}{|x-y|^{N+2-2s}}.
\end{equation}
and 
\begin{equation}\label{pois-id}
\int_{\Omega} G^{1-s}_\Omega(x,\xi)P^s_\Omega(\xi,y)\;d\xi\ =\ P_\Omega^1(x,y).
\end{equation}
\end{lem}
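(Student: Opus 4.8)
The plan is to prove Lemma~\ref{lemma8} in three stages: first establish that $P^s_\Omega(x,\cdot)$ is well-defined and continuous on $\partial\Omega$ with the two-sided bound \eqref{poissbound}, then derive the composition identity \eqref{pois-id}. For the first stage I would start from the representation \eqref{green} of $G^s_\Omega$ via the heat kernel and use the sharp Gaussian-type heat kernel bounds for $p_\Omega$ on a $C^{1,1}$ domain (referenced as \eqref{hkb} in the text), together with the matching bounds for the normal derivative $\partial_{\nu_y}p_\Omega(t,x,y)$ as $y\to\partial\Omega$. Concretely, since $p_\Omega(t,x,y)$ vanishes for $y\in\partial\Omega$ and is smooth up to the boundary away from $x$, one can differentiate under the integral sign in \eqref{green} for $y$ near $\partial\Omega$ (and $x$ fixed in the interior), provided the resulting integrand is dominated. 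The known estimate $|\partial_{\nu_y}p_\Omega(t,x,y)|\lesssim t^{-1}\,\frac{\delta(x)}{\sqrt t}\wedge 1 \cdot\frac1{t^{N/2}}e^{-c|x-y|^2/t}$-type bound (for $y\in\partial\Omega$, so $\delta(y)=0$) lets one check integrability of $t^{s-1}\partial_{\nu_y}p_\Omega(t,x,y)$ on $(0,\infty)$ and justifies the definition of $P^s_\Omega$; continuity in $y$ follows by dominated convergence with the same dominating function.

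For the two-sided bound \eqref{poissbound}, I would split the time integral $\int_0^\infty t^{s-1}|\partial_{\nu_y}p_\Omega(t,x,y)|\,dt$ at $t=|x-y|^2$. On $t\le|x-y|^2$ the Gaussian factor $e^{-c|x-y|^2/t}$ dominates and the contribution is controlled by $\delta(x)|x-y|^{-(N+2-2s)}$ up to constants; on $t\ge|x-y|^2$ the polynomial factors dominate and, after using that $\Omega$ is bounded (so the relevant $t$-range for the lower bound is effectively finite and the heat kernel is comparable to its short-time Gaussian behaviour there), one again gets the same order $\delta(x)|x-y|^{-(N+2-2s)}$. This is precisely the computation underlying the well-known estimate $G^s_\Omega(x,y)\simeq |x-y|^{-(N-2s)}\bigl(\frac{\delta(x)\delta(y)}{|x-y|^2}\wedge 1\bigr)$; differentiating the $\delta(y)$ factor at the boundary produces the stated Poisson bound. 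I expect the main obstacle to be making the differentiation-under-the-integral and the sharp boundary behaviour of $\partial_{\nu_y}p_\Omega$ rigorous — i.e. citing or deriving the correct sharp gradient estimate for the Dirichlet heat kernel on a $C^{1,1}$ domain and checking that the error terms from the non-flat boundary do not spoil the homogeneity count. Everything else is a matter of carefully tracking powers of $t$, $|x-y|$ and $\delta(x)$.

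For the identity \eqref{pois-id}, the strategy is to differentiate the already-established composition rule \eqref{compo}, namely $\int_\Omega G^{1-s}_\Omega(x,\xi)G^s_\Omega(\xi,y)\,d\xi=G^1_\Omega(x,y)$, with respect to $-\partial_{\nu_y}$ as $y\to\partial\Omega$. On the right this yields $P^1_\Omega(x,y)$ by definition; on the left, one wants to pass the normal derivative inside the integral in $\xi$, which gives $\int_\Omega G^{1-s}_\Omega(x,\xi)P^s_\Omega(\xi,y)\,d\xi$. Justifying this interchange requires a dominated-convergence argument: for $y'$ in an interior neighbourhood approaching $y\in\partial\Omega$ along the normal, $\frac{G^s_\Omega(\xi,y)-G^s_\Omega(\xi,y')}{\delta(y')}$ must be bounded by an $L^1_\xi$ function uniformly, which follows from the gradient estimate for $G^s_\Omega$ in the second variable near the boundary (again a consequence of the heat kernel bounds) combined with the local integrability of $G^{1-s}_\Omega(x,\cdot)$. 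Since \eqref{compo} holds only for a.e.\ $x$, I would fix such an $x$, carry out the limit, and then note that both sides are continuous in $y\in\partial\Omega$, so the identity extends to all admissible $y$; alternatively one can first smooth in $x$. The delicate point here, as before, is the uniform domination, so the heart of the proof is really the sharp boundary estimates for $G^s_\Omega$ and its normal derivative, after which \eqref{pois-id} is essentially bookkeeping on top of \eqref{compo}.
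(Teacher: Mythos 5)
Your treatment of the well-definedness of $P^s_\Omega(x,y)$, the two-sided bound \eqref{poissbound}, and the continuity in $y$ is essentially the paper's approach. The paper works directly with the difference quotient $G^s_\Omega(x,z)/\delta(z)$ after the substitution $t=|z-x|^2\tau$, which produces matching upper and lower majorants from \eqref{hkb} alone and lets them pass to the limit $z\to y$ by (generalized) dominated convergence, without invoking a separate gradient estimate for $p_\Omega$; your plan of differentiating under the integral and splitting the time integral at $t=|x-y|^2$ is the same mechanism. The idea of deriving \eqref{pois-id} by applying $-\partial/\partial\nu_y$ to the composition rule \eqref{compo} is also the paper's.

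However, the justification you propose for passing $-\partial/\partial\nu_y$ under the $\xi$-integral has a genuine gap. You claim that the difference quotient $G^s_\Omega(\xi,z)/\delta(z)$, multiplied by $G^{1-s}_\Omega(x,\xi)$, is dominated by a fixed $L^1_\xi$ function uniformly in $z$ near $y$. This is false when $s\le\frac12$. In the region where $\delta(\xi)\asymp\delta(z)$ and $|\xi-z|^2\lesssim\delta(\xi)\delta(z)$, the bound \eqref{green-behav} gives (with $x$ fixed in the interior) $G^{1-s}_\Omega(x,\xi)\asymp\delta(\xi)$ and $G^s_\Omega(\xi,z)/\delta(z)\asymp |\xi-z|^{2s-N}/\delta(z)$, so the product is comparable to $|\xi-z|^{2s-N}$. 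As $z$ slides along the inner normal towards $y$, the singular point of this expression travels with $z$; the envelope $\sup_z|\xi-z|^{2s-N}$ over the admissible $z$ (those with $\delta(z)\asymp\delta(\xi)$, $|\xi-z|\lesssim\delta(\xi)$) is of order $\mathrm{dist}(\xi,\ell)^{2s-N}$, where $\ell$ is the inner normal segment through $y$, and in local coordinates its integral reduces to $\int_0^1\int_0^{c\,\xi_N}\rho^{2s-2}\,d\rho\,d\xi_N$, which diverges for $s\le\frac12$. So no fixed majorant exists, and plain dominated convergence as you describe cannot close the proof. The paper instead reverts to the heat-kernel representation $\int_{(0,\infty)\times\Omega}f(t,\xi,z)\,dt\,d\xi$ and runs a Vitali-type argument: it shows that the contribution of the small region $(0,\eps)\times B(y,\eps)$ is $\le C\,\eps^{2+2s}$ uniformly in $z$ near $y$, and applies dominated convergence on the complement, where a fixed majorant does exist. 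You would need such a decomposition (equivalently, generalized dominated convergence with a $z$-dependent majorant whose excess mass vanishes) to complete the argument.
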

\begin{rmk}\rm
When $\Omega$ is merely Lipschitz, one must work with the Martin kernel in place of the Poisson kernel, see \cite{gprssv}.
\end{rmk}

\begin{proof} {\it of Lemma \ref{lemma8}. }
Take $x,z\in\Omega$, $y\in\partial\Omega$. Then,
\[\frac{G_\Omega^s(x,z)}{\delta(z)}=
\frac{1}{\Gamma(s)\,\delta(z)}\int_0^\infty p_\Omega(t,x,z)t^{s-1}\;dt=
\frac{\vert z-x\vert^{2s}}{\Gamma(s)\,\delta(z)} \int_0^\infty p_\Omega(\vert z-x\vert^{2}\tau,x,z)\tau^{s-1}\;d\tau
\]
Since $\Omega$ has $C^{1,1}$ boundary, 
given $x\in\Omega$, $p_\Omega(\cdot,x,\cdot)\in C^1((0,+\infty)\times\overline\Omega)$ 
and the following heat kernel bound holds (cf. Davies, Simon and Zhang \cite{davies1,davies2, zhang}):
\begin{equation}\label{hkb}
\left[\frac{\delta(x)\delta(y)}{t}\wedge 1\right]\frac{1}{c_1 t^{N/2}}e^{-|x-y|^2/(c_2t)}\le p_\Omega(t,x,y)\leq\left[\frac{\delta(x)\delta(y)}{t}\wedge 1\right]\frac{c_1}{t^{N/2}}e^{-c_2|x-y|^2/t},
\end{equation}
where $c_1,c_2$ are constants depending on $\Omega, N$ only.
So,
\begin{equation}\label{prepois}
\frac{\vert z-x\vert^{2s}}{\,\delta(z)} p_\Omega(\vert z-x\vert^{2}\tau,x,z)\tau^{s-1}\le C
\vert z-x\vert^{2s-N-2}\delta(x)\tau^{s-2-N/2}e^{-c_2/\tau}.
\end{equation}
and the reverse inequality 
\[
\frac 1C
\vert z-x\vert^{2s-N-2}\delta(x)\tau^{s-2-N/2}e^{-1/(c_2\tau)}\le
\frac{\vert z-x\vert^{2s}}{\,\delta(z)} p_\Omega(\vert z-x\vert^{2}\tau,x,z)\tau^{s-1}
\]
also holds for $\tau\ge \delta(x)\delta(z)\vert z-x\vert^{-2}$.
As $z\to y\in\partial\Omega$, 
the right-hand-side of \eqref{prepois} obviously converges 
in $L^1(0,+\infty,d\tau)$ so we may apply 
the generalized dominated convergence theorem to deduce that
$P_\Omega^s(x,y)$ is well-defined, satisfies \eqref{poissbound} and 
\[
P_\Omega^s(x,y)=-\frac{\partial}{\partial\nu_y} G^s_\Omega(x,y)=\lim_{z\to y}\frac{G^s_\Omega(x,z)}{\delta(z)}
=-\frac1{\Gamma(s)}\int_0^\infty\frac{\partial}{\partial\nu_y}p_\Omega(t,x,y)\,t^{s-1}\,dt.
\]
From this last formula we deduce also that, for any fixed $x\in\Omega$,
the function $P_\Omega^s(x,\cdot)\in C(\partial\Omega)$: indeed,
having chosen a sequence ${\{y_k\}}_{k\in\N}\subset\partial\Omega$
converging to some $y\in\partial\Omega$, we have
\[
\left|P_\Omega^s(x,y_k)-P_\Omega^s(x,y)\right|
\leq\frac1{\Gamma(s)}\int_0^\infty\left|\frac{\partial}{\partial\nu_y}p_\Omega(t,x,y_k)
-\frac{\partial}{\partial\nu_y}p_\Omega(t,x,y)\right|t^{s-1}\,dt
\]
where, by\eqref{hkb}
\[
\left|\frac{\partial}{\partial\nu_y}p_\Omega(t,x,y)\right|\leq
\frac{c_1\,\delta(x)}{t^{N/2+1}}\,e^{-c_2|x-y|^2/t}\leq
\frac{c_1\,\delta(x)}{t^{N/2+1}}\,e^{-c_2\delta(x)^2/t}
\qquad\hbox{for any }y\in\partial\Omega,
\]
so that $\left|P_\Omega^s(x,y_k)-P_\Omega^s(x,y)\right|\to 0$ as $k\uparrow\infty$
by dominated convergence.

By similar arguments, $G_\Omega^s$ is a continuous function on $\super\Omega^2\setminus\{(x,y):x= y\}$.
And so, by \eqref{compo}, we have
\[
-\frac{\partial}{\partial\nu_y}\int_{\Omega} G^{1-s}_\Omega(x,\xi)G^s_\Omega(\xi,y)\;d\xi =\ P_\Omega^1(x,y)
\]
Let us compute the derivative of the left-hand side alternatively. We have
\[
\int_\Omega G_\Omega^{1-s}(x,\xi)\frac{G_\Omega^s(\xi,z)}{\delta(z)}\;d\xi= 
 \int_{\R^+\times\Omega} f(t,\xi,z)\;dt d\xi,
\]
where, having fixed $x\in\Omega$,
\[
f(t,\xi,z) = \frac{G_\Omega^{1-s}(x,\xi)}{\Gamma(s)\,\delta(z)}  p_\Omega(t,\xi,z)t^{s-1} \le C \vert x-\xi\vert^{2s-N}\left[\frac{\delta(x)\delta(\xi)}{\vert x-\xi\vert^2}\wedge 1\right]t^{s-2-N/2}\delta(\xi)e^{-c_2\vert z-\xi\vert^2/t}
\]
For fixed $\eps>0$, and $\xi\in\Omega\setminus B(y,\eps)$, $z\in B(y,\eps/2)$, we deduce that
\[
f(t,\xi,z) \le C \vert x-\xi\vert^{2s-N}t^{s-2-N/2}e^{-c_\eps/t}\in L^1((0,+\infty)\times\Omega)
\]
Similarly, if $t>\eps$,
\[
f(t,\xi,z)\le C \vert x-\xi\vert^{2s-N}t^{s-2-N/2}\in L^1((\eps,+\infty)\times\Omega)
\]
Now,
\[
\int_0^\eps t^{s-1-N-2}e^{-c_2\frac{\vert\xi-z\vert^2}{t}}dt \le \vert\xi-z\vert^{2s-N} \int_0^{+\infty} \tau^{s-1-N/2}e^{-c_2/\tau}d\tau
\]
Hence, there exists a constant $C>0$ independent of $\eps$ such that 
\[
\int_{(0,\eps)\times B(y,\eps)}f(t,\xi,z) dt\;d\xi \le C \eps^{2+2s}
\]
It follows from the above estimates and dominated convergence that
\begin{equation}
P_\Omega^1(x,y)=\lim_{z\to y}\int_{(0,+\infty)\times\Omega} f(t,\xi,z)\;dt d\xi = \int_{\Omega} G^{1-s}_\Omega(x,\xi)P^s_\Omega(\xi,y)d\xi
\end{equation}
i.e. \eqref{pois-id} holds.
\end{proof}

\begin{rmk} \rm
Thanks to the heat kernel bound \eqref{hkb}, the following estimate also holds. 
\begin{equation}\label{green-behav}
\frac1C \frac1{{|x-y|}^{N-2s}}\left(1\wedge\frac{\delta(x)\,\delta(y)}{|x-y|^2}\right) \le G_\Omega^s(x,y)\le  \frac C{{|x-y|}^{N-2s}}\left(1\wedge\frac{\delta(x)\,\delta(y)}{|x-y|^2}\right).
\end{equation}
for some constant $C=C(\Omega,N,s)$.
Also observe for computational convenience that 
\[
\frac12\left(1\wedge\frac{\delta(x)\,\delta(y)}{|x-y|^2}\right)\le \frac{\delta(x)\,\delta(y)}{\delta(x)\,\delta(y)+|x-y|^2}\le \left(1\wedge\frac{\delta(x)\,\delta(y)}{|x-y|^2}\right).
\]
\end{rmk}

\section{Harmonic functions and interior regularity}
\label{harm-sect}

\begin{defi}\label{harm-def} %Assume that $\Omega$ has $C^{1,1}$ boundary. 
A function $h\in L^1(\Omega,\delta(x)dx)$ is $s$-harmonic in $\Omega$ 
if for any $\psi\in C^\infty_c(\Omega)$ there holds
\[
\int_\Omega h\:\Ds\psi\ =\ 0.
\]
\end{defi}
\noindent The above definition makes sense thanks to the following lemma.
\begin{lem}\label{As-smooth} %Assume that $\Omega$ has $C^{1,1}$ boundary. 
For any $\psi\in C^\infty_c(\Omega)$, $\Ds\psi\in C^1_0(\super\Omega)$ and there exists a constant $C=C(s,N,\Omega,\psi)>0$ such that
\begin{equation}\label{ineq-lemma1}
|\Ds \psi|\ \leq\ C\delta\qquad \text{in }\Omega.
\end{equation}
In addition, if $\psi\ge0$, $\psi\not\equiv0$, then
\begin{equation}
\Ds\psi\ \leq\ -C\delta \qquad \text{in } \Omega\setminus\hbox{supp}\,\psi.
\end{equation}
\end{lem}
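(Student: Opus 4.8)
The plan is to derive both inequalities from the pointwise formula \eqref{As2}, after first noting that $\Ds\psi$ is smooth away from $\partial\Omega$. Set $K:=\hbox{supp}\,\psi$, a compact subset of $\Omega$, and $d_0:=\dist(K,\partial\Omega)=\min_K\delta>0$. If $x\in\Omega\setminus K$, then $\psi\equiv0$ near $x$ and $\psi(x)=0$, so in \eqref{As2} the principal value is superfluous and the killing term vanishes, leaving
\[
\Ds\psi(x)\ =\ -\int_K\psi(y)\,J(x,y)\,dy ,\qquad x\in\Omega\setminus K .
\]
On the other hand, by the commutation identity \eqref{adg} and Lemma \ref{inverse} applied with $s$ replaced by $1-s$ (to $-\lapl\psi\in C^\infty_c(\Omega)\subset L^2(\Omega)$), one has $\Ds\psi=\rest^{s-1}(-\lapl\psi)=\int_\Omega G^{1-s}_\Omega(\cdot,y)(-\lapl\psi)(y)\,dy$ a.e.\ in $\Omega$; thus $\Ds\psi$ is the Green potential of a smooth, compactly supported function and hence lies in $C^\infty(\Omega)$ by standard interior regularity. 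So the whole issue is the behaviour of $\Ds\psi$ near $\partial\Omega$.

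For the upper bound I would restrict to $\{\delta<d_0/2\}\subset\Omega\setminus K$, where $|x-y|\ge d_0/2$ for all $y\in K$ (by $1$-Lipschitz continuity of $\delta$). Bounding $\left[\tfrac{\delta(x)\delta(y)}{t}\wedge 1\right]\le\tfrac{\delta(x)\delta(y)}{t}$ in the heat-kernel bound \eqref{hkb} and integrating in $t$ against $t^{-1-s}$ gives
\[
J(x,y)\ \le\ C\,\frac{\delta(x)\,\delta(y)}{|x-y|^{N+2+2s}}\ \le\ C_K\,\delta(x),\qquad y\in K,\ \delta(x)<d_0/2 ,
\]
so $|\Ds\psi(x)|\le C_K\|\psi\|_{L^1(\Omega)}\,\delta(x)$ there; in particular $\Ds\psi$ extends continuously by $0$ to $\partial\Omega$. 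For $C^1$-regularity up to the boundary, observe that on this range the $t$-integral defining $J(x,y)$ has no near-diagonal part, so — using the $C^1$-regularity of $p_\Omega$ up to $\super\Omega$ (in $x$, for $y\in K$ fixed) together with the matching first-derivative heat-kernel bounds, exactly as for the normal derivative in the proof of Lemma \ref{lemma8} — one may differentiate once under the integral sign in $x$, uniformly for $y\in K$. Hence $x\mapsto\Ds\psi(x)=-\int_K\psi(y)J(x,y)\,dy$ is $C^1$ up to $\partial\Omega$ on $\{\delta<d_0/2\}$, and, together with interior smoothness, $\Ds\psi\in C^1_0(\super\Omega)$. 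Then \eqref{ineq-lemma1} is immediate: for $\bar x\in\partial\Omega$ realising $\delta(x)$ the segment $[x,\bar x]$ lies in $\super\Omega$, so $|\Ds\psi(x)|=|\Ds\psi(x)-\Ds\psi(\bar x)|\le\|\grad(\Ds\psi)\|_{L^\infty(\Omega)}\,\delta(x)$.

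For the last assertion, with $\psi\ge0$, $\psi\not\equiv0$, the key is the lower bound in \eqref{hkb}. Restricting the $t$-integral in \eqref{jkappa} to $t\ge D^2$, $D:=\hbox{diam}\,\Omega$ — so that $\delta(x)\delta(y)\le D^2\le t$ and $|x-y|^2\le D^2\le t$ — one obtains
\[
J(x,y)\ \ge\ \frac{s}{\Gamma(1-s)}\int_{D^2}^\infty\frac{\delta(x)\delta(y)}{t}\cdot\frac{e^{-1/c_2}}{c_1\,t^{N/2}}\cdot\frac{dt}{t^{1+s}}\ =\ c\,\delta(x)\,\delta(y)\qquad\text{for all }x,y\in\Omega ,
\]
with $c=c(\Omega,N,s)>0$. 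Inserting this into $\Ds\psi(x)=-\int_K\psi(y)J(x,y)\,dy$ and using $\delta(y)\ge d_0$ on $K$ gives, for $x\in\Omega\setminus\hbox{supp}\,\psi$,
\[
\Ds\psi(x)\ \le\ -\,c\,d_0\Big(\int_K\psi\Big)\delta(x)\ =\ -\,C\,\delta(x) ,
\]
which is the required estimate.

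The main obstacle I foresee is the differentiation under the integral sign up to the boundary in the second step: it rests on $C^1$-regularity and matching first-derivative bounds for the Dirichlet heat kernel $p_\Omega(t,\cdot,y)$ up to the boundary of the $C^{1,1}$ domain $\Omega$ — precisely the ingredient already used in the proof of Lemma \ref{lemma8}. Everything else reduces to routine bookkeeping with the heat-kernel bounds \eqref{hkb}.
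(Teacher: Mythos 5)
Your proof is correct, but it proceeds along a genuinely different route than the paper's. The paper establishes $\Ds\psi\in C^1_0(\overline\Omega)$ and the bound $|\Ds\psi|\le C\delta$ purely by spectral decomposition: writing $\Ds\psi=\sum_j\lambda_j^s\widehat\psi_j\varphi_j$, the polynomial bound $\|\varphi_j/\delta\|_{L^\infty}\le(C\lambda_j)^k$ from \eqref{esti-ef} together with the superpolynomial decay $|\widehat\psi_j|\le\lambda_j^{-m}\|\lapl^m\psi\|_{L^2}$ from \eqref{spectral-coef} makes the series (and the corresponding series for $\nabla\Ds\psi$) converge absolutely in $C^0(\overline\Omega)$, from which both assertions drop out immediately. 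This avoids any boundary regularity of the heat kernel. You instead work with the pointwise jump-kernel representation \eqref{As2} on $\Omega\setminus\hbox{supp}\,\psi$ and push the heat-kernel bound \eqref{hkb} through the $t$-integral; this buys you a short, self-contained derivation of the estimate $|\Ds\psi|\le C\delta$, but the $C^1_0(\overline\Omega)$ regularity then rests on differentiation under the integral sign, which requires a gradient estimate for $p_\Omega(t,\cdot,y)$ up to $\partial\Omega$ that goes somewhat beyond the pointwise bound \eqref{hkb}; you are right that this is of the same nature as the ingredient used in Lemma \ref{lemma8}, but it is a heavier input than the spectral route. For the second inequality the two proofs are closer; still, your global lower bound $J(x,y)\ge c\,\delta(x)\delta(y)$, obtained by restricting the $t$-integral to $t\ge(\hbox{diam}\,\Omega)^2$, is cleaner than the paper's argument (which localizes to a ball around a maximizer of $\psi$ and applies the two-sided bound \eqref{Jbound} there), and it gives the conclusion $\Ds\psi\le -C\delta$ in one line. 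Both arguments are valid.
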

\begin{proof}
Thanks to \eqref{esti-ef} and \eqref{spectral-coef}, 
$\Ds\psi\in C^1_0(\overline\Omega)$ and 
\[
\left|\frac{\Ds \psi}{\delta}\right|\leq
\sum_{j=1}^{\infty}\lambda_j^s|\widehat\psi_j|\left\|\frac{\varphi_j}{\delta}\right\|_{L^\infty(\Omega)}
<\infty
\]
and \eqref{ineq-lemma1} follows. Let us turn to the case where $\psi\ge0$, $\psi\not\equiv0$.
By the heat kernel bound \eqref{hkb}, there exists $C=C(\Omega,N,s)>0$ such that
\begin{equation}\label{Jbound}
\frac{1}{C{|x-y|}^{N+2s}}
\left[\frac{\delta(x)\delta(y)}{{|x-y|}^2}\wedge 1\right]\leq J(x,y)\leq \frac{C}{{|x-y|}^{N+2s}}
\left[\frac{\delta(x)\delta(y)}{{|x-y|}^2}\wedge 1\right].
\end{equation}
Now, we apply formula \eqref{As2} and assume 
that $x\in\Omega\setminus\hbox{supp}\psi$. Denote by $x^*$ a point of maximum of $\psi$ and let $2r=\dist(x^*,\hbox{supp}\,\psi)$. Then for $y\in B_r(x^*)$, it holds $\psi(y),\delta(y)\ge c_1>0$, $\vert x-y\vert\le c_2$ and so
\begin{eqnarray*}
\Ds \psi(x) & = & -\int_\Omega\psi(y)\,J(x,y)\;dy \\
& \leq & -C\int_{B_r(x^*)}\frac{\psi(y)}{{|x-y|}^{N+2s}}
\left[\frac{\delta(x)\delta(y)}{{|x-y|}^2}\wedge 1\right]dy \\
& \leq & -C\,\delta(x).
\end{eqnarray*}
\end{proof}

\begin{lem}\label{pois-harm} 
The function $P_\Omega^s(\cdot,z)\in L^1(\Omega,\delta(x)dx)$ 
is $s$-harmonic in $\Omega$ for any fixed $z\in\partial\Omega$.
\end{lem}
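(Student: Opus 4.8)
The plan is to realize the Poisson kernel $P^s_\Omega(\cdot,z)$ as a boundary limit of the Green functions $G^s_\Omega(\cdot,w)$, $w\to z$, and to use that $\int_\Omega G^s_\Omega(\cdot,w)\Ds\psi=\psi(w)$ together with the fact that any $\psi\in C^\infty_c(\Omega)$ vanishes near $\partial\Omega$. First the integrability required by Definition \ref{harm-def}: by \eqref{poissbound} and the elementary bound $\delta(x)\le|x-z|$ (valid since $z\in\partial\Omega$),
\[
\int_\Omega P^s_\Omega(x,z)\,\delta(x)\,dx\ \le\ C\int_\Omega\frac{\delta(x)^2}{|x-z|^{N+2-2s}}\,dx\ \le\ C\int_\Omega|x-z|^{2s-N}\,dx\ <\ +\infty,
\]
so $P^s_\Omega(\cdot,z)\in L^1(\Omega,\delta(x)dx)$; moreover, since $|\Ds\psi|\le C\delta$ by Lemma \ref{As-smooth}, the integral $\int_\Omega P^s_\Omega(x,z)\Ds\psi(x)\,dx$ is absolutely convergent for every $\psi\in C^\infty_c(\Omega)$.

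Now fix $\psi\in C^\infty_c(\Omega)$. Since $\psi\in C^\infty_c(\Omega)\subset H(2s)$ we have $\Ds\psi\in L^2(\Omega)$ and $\rest^{-s}(\Ds\psi)=\psi$; applying Lemma \ref{inverse} to $f=\Ds\psi$ and using the symmetry of $G^s_\Omega$ (immediate from \eqref{green}), we obtain, for a.e. $w\in\Omega$,
\[
\int_\Omega G^s_\Omega(x,w)\,\Ds\psi(x)\,dx\ =\ \psi(w),\qquad\text{hence}\qquad\int_\Omega\frac{G^s_\Omega(x,w)}{\delta(w)}\,\Ds\psi(x)\,dx\ =\ \frac{\psi(w)}{\delta(w)}.
\]
Let $w\to z\in\partial\Omega$ along such points. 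Because $\mathrm{supp}\,\psi\subset\subset\Omega$ while $z\in\partial\Omega$, $\psi\equiv0$ on a neighbourhood of $z$, so the right-hand side is $0$ for all $w$ sufficiently close to $z$. On the left-hand side, $G^s_\Omega(x,w)/\delta(w)\to P^s_\Omega(x,z)$ pointwise in $x$ by Lemma \ref{lemma8}, so it only remains to pass to the limit under the integral sign.

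For the domination, combine the Green function bound \eqref{green-behav} with $|\Ds\psi(x)|\le C\delta(x)$:
\[
\frac{G^s_\Omega(x,w)}{\delta(w)}\,|\Ds\psi(x)|\ \le\ \frac{C\,\delta(x)}{\delta(w)\,|x-w|^{N-2s}}\Bigl(1\wedge\tfrac{\delta(x)\delta(w)}{|x-w|^2}\Bigr)\ \le\ \frac{C'}{|x-w|^{N-2s}},
\]
where the last inequality is obtained by distinguishing $|x-w|\ge\delta(w)$ (then $\delta(x)\le2|x-w|$) and $|x-w|<\delta(w)$ (then $\delta(x)\le2\delta(w)$), with $C'$ independent of $x,w$. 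Since $\int_\Omega|x-w|^{2s-N}dx$ is bounded uniformly in $w\in\Omega$ and tends to $\int_\Omega|x-z|^{2s-N}dx<+\infty$ as $w\to z$, the generalized dominated convergence theorem gives
\[
\int_\Omega P^s_\Omega(x,z)\,\Ds\psi(x)\,dx\ =\ \lim_{w\to z}\int_\Omega\frac{G^s_\Omega(x,w)}{\delta(w)}\,\Ds\psi(x)\,dx\ =\ \lim_{w\to z}\frac{\psi(w)}{\delta(w)}\ =\ 0,
\]
which is precisely the $s$-harmonicity condition of Definition \ref{harm-def}. The only genuinely delicate point is this interchange of limit and integral: no single integrable function dominates $G^s_\Omega(x,w)/\delta(w)\,|\Ds\psi(x)|$ as $w$ ranges up to the boundary point $z$, which forces the use of the generalized (rather than ordinary) dominated convergence theorem, the key observation being that multiplication by the factor $|\Ds\psi|\lesssim\delta$ collapses the $w$-dependent kernel $G^s_\Omega(x,w)/\delta(w)$ into the uniformly integrable kernel $|x-w|^{2s-N}$. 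Everything else is a direct application of Lemmas \ref{inverse}, \ref{lemma8} and \ref{As-smooth} and the heat-kernel bounds.
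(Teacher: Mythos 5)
Your proof is correct, and it is a genuinely different argument from the one in the paper. The paper's proof uses the adjoint identity \eqref{adg} to write $\Ds\psi=\rest^{s-1}(-\lapl\psi)$, then invokes Lemma \ref{inverse}, Fubini and the composition identity \eqref{pois-id} to reduce $\int_\Omega P^s_\Omega(\cdot,z)\,\Ds\psi$ to $\int_\Omega P^1_\Omega(\cdot,z)\,(-\lapl)\psi$, which vanishes because the classical Poisson kernel is harmonic. You instead start from the reproducing identity $\psi(w)=\int_\Omega G^s_\Omega(x,w)\,\Ds\psi(x)\,dx$, divide by $\delta(w)$, note that the right-hand side is identically zero for $w$ near $\partial\Omega$ (compact support of $\psi$), and pass to the boundary limit $w\to z$ on the left, realizing $P^s_\Omega(\cdot,z)$ as the limit of $G^s_\Omega(\cdot,w)/\delta(w)$ -- a fact established in Lemma \ref{lemma8}. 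The key technical step, correctly identified and handled, is that the factor $|\Ds\psi|\lesssim\delta$ from Lemma \ref{As-smooth} kills the boundary singularity of the normalized Green kernel and yields a uniformly integrable dominator $|x-w|^{2s-N}$, allowing the generalized dominated convergence theorem to close the argument. Your route is somewhat more elementary in that it bypasses the identity \eqref{pois-id} (itself requiring a nontrivial limiting argument in Lemma \ref{lemma8}), at the cost of having to check the domination by hand; the paper's route is more structural, transferring the harmonicity directly from the $s=1$ case. Both are complete.
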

\begin{proof}
Thanks to \eqref{poissbound}, 
$P_\Omega^s(\cdot,z)\in L^1(\Omega,\delta(x)dx)$.
Pick $\psi\in C^\infty_c(\Omega)$ and exploit \eqref{adg}:
\[
\int_\Omega P_\Omega^s(\cdot,z)\,\Ds\psi=
\int_\Omega P_\Omega^s(\cdot,z)\,\rest^{s-1}[-\lapl\psi].
\]
Applying Lemma \ref{inverse}, the Fubini's Theorem and \eqref{pois-id}, the above quantity is equal to
\[
\int_\Omega P^1_\Omega(\cdot,z)\,(-\lapl)\psi=0.
\]
\end{proof}

\begin{lem}\label{harm}  
For any finite Radon measure $\zeta\in\mathcal{M}(\partial\Omega)$, let
\begin{equation}\label{repres}
h(x)\ =\ \int_{\partial\Omega} P^s_\Omega(x,z)\;d\zeta(z),\qquad x\in\Omega.
\end{equation}
Then, %$h\in L^1(\Omega,\delta(x)dx)$ and 
$h$ is $s$-harmonic in $\Omega$. In addition, there exists a constant $C=C(N,s,\Omega)>0$ such that
\begin{equation}\label{blah}
\Vert h\Vert_{L^1(\Omega,\delta(x)dx)} \le C \Vert \zeta\Vert_{\mathcal M(\partial\Omega)}.
\end{equation}
Conversely, for any $s$-harmonic function $h\in L^1(\Omega,\delta(x)dx)$, $h\geq 0$,
there exists a finite Radon measure $\zeta\in\mathcal{M}(\partial\Omega)$, $\zeta\geq 0$,
such that \eqref{repres} holds.
\end{lem}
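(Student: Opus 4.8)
The forward direction is routine. By Tonelli's theorem, $\|h\|_{L^1(\Omega,\delta\,dx)}\le\int_{\partial\Omega}\left(\int_\Omega P^s_\Omega(x,z)\,\delta(x)\,dx\right)d|\zeta|(z)$, and the inner integral is bounded, uniformly in $z\in\partial\Omega$, by a constant $C(\Omega,N,s)$: this is immediate from the Poisson bound \eqref{poissbound} together with $\delta(x)\le|x-z|$. Hence \eqref{blah} holds and $h\in L^1(\Omega,\delta\,dx)\subset L^1_{loc}(\Omega)$, so Definition \ref{harm-def} applies to $h$. For $\psi\in C^\infty_c(\Omega)$ one has $|\Ds\psi|\le C\delta$ by Lemma \ref{As-smooth}, so the same uniform bound legitimizes Fubini's theorem, giving $\int_\Omega h\,\Ds\psi=\int_{\partial\Omega}\left(\int_\Omega P^s_\Omega(\cdot,z)\,\Ds\psi\right)d\zeta(z)=0$ by Lemma \ref{pois-harm}.

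The converse is the substantive direction. The plan is to \emph{lift} $h$ to a classical harmonic function by composing with the Green operator of exponent $1-s$, invoke the classical Herglotz--Riesz--Martin representation, and then \emph{descend} using the composition identities \eqref{compo} and \eqref{pois-id}. Set $v(x):=\int_\Omega G^{1-s}_\Omega(x,\xi)\,h(\xi)\,d\xi$. The Green bound \eqref{green-behav} --- which in particular gives $G^{1-s}_\Omega(x,\xi)\le C\,\delta(\xi)$ once $x$ ranges in a fixed compact subset of $\Omega$ and $\xi$ is near $\partial\Omega$ --- together with the local integrability of $G^{1-s}_\Omega(x,\cdot)$, shows $v\in L^1_{loc}(\Omega)$; and $v\ge0$. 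Then, for $\psi\in C^\infty_c(\Omega)$, Fubini's theorem, the symmetry of $G^{1-s}_\Omega$ and the analogue of Lemma \ref{inverse} for exponent $1-s$ give
\[
\int_\Omega v\,(-\lapl)\psi=\int_\Omega h\;\rest^{-(1-s)}\!\left[(-\lapl)\psi\right]=\int_\Omega h\,\Ds\psi=0,
\]
where we used $\rest^{-(1-s)}\circ\rest=\rest^{s}$ on $C^\infty_c(\Omega)$ (read off the spectral decomposition) and the $s$-harmonicity of $h$. Thus $v$ is a nonnegative, locally integrable distributional solution of $\lapl v=0$, hence (Weyl's lemma) a genuine nonnegative harmonic function on $\Omega$; the classical representation theorem for nonnegative harmonic functions on a bounded $C^{1,1}$ domain then yields a finite nonnegative Radon measure $\zeta\in\mathcal M(\partial\Omega)$ with $v(x)=\int_{\partial\Omega}P^1_\Omega(x,z)\,d\zeta(z)$.

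To remove the lift, apply \eqref{pois-id} and Tonelli's theorem:
\[
v(x)=\int_{\partial\Omega}\int_\Omega G^{1-s}_\Omega(x,\xi)\,P^s_\Omega(\xi,z)\,d\xi\,d\zeta(z)=\int_\Omega G^{1-s}_\Omega(x,\xi)\,w(\xi)\,d\xi,\qquad w(\xi):=\int_{\partial\Omega}P^s_\Omega(\xi,z)\,d\zeta(z),
\]
where $w\in L^1(\Omega,\delta\,dx)$ is $s$-harmonic by the forward direction. Hence $\int_\Omega G^{1-s}_\Omega(x,\xi)\,(h-w)(\xi)\,d\xi=0$ for a.e.\ $x$, and it remains to check that the operator $f\mapsto\int_\Omega G^{1-s}_\Omega(\cdot,\xi)\,f(\xi)\,d\xi$ is injective on $L^1(\Omega,\delta\,dx)$. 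For this we test against $\rest^{1-s}\psi$, $\psi\in C^\infty_c(\Omega)$: by the analogue of Lemma \ref{As-smooth} for exponent $1-s$ one has $|\rest^{1-s}\psi|\le C\delta$, and $\int_\Omega G^{1-s}_\Omega(\xi,x)\,\rest^{1-s}\psi(x)\,dx=\rest^{-(1-s)}\rest^{1-s}\psi(\xi)=\psi(\xi)$ a.e.\ by Lemma \ref{inverse}; a Fubini swap then gives $\int_\Omega(h-w)\,\psi=0$ for every such $\psi$, so $h=w$ a.e., which is \eqref{repres}. The step requiring the most care is the weighted Green estimate $\int_\Omega G^{1-s}_\Omega(x,\xi)\,\delta(x)\,dx\le C(\Omega,N,s)\,\delta(\xi)$ --- equivalently $\rest^{-(1-s)}\delta\asymp\delta$ --- which underlies every Fubini interchange above and the memberships $v,w\in L^1(\Omega,\delta\,dx)$; it is a routine but not entirely trivial consequence of \eqref{green-behav}, obtained by splitting the integral according to the size of $|x-\xi|$ relative to $\delta(\xi)$. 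A secondary point is to quote the classical representation theorem in exactly the form ``nonnegative harmonic in $\Omega$ $\Longleftrightarrow$ Poisson integral of a finite nonnegative boundary measure'' valid on $C^{1,1}$ domains.
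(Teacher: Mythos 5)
Your proof is correct and follows the same overall strategy as the paper: lift $h$ to $v=\mathbb{G}_\Omega^{1-s}h$, show $v$ is nonnegative and classically harmonic (via Weyl's lemma after integrating against $-\lapl\psi$), invoke the Herglotz--Riesz representation, and then descend using \eqref{pois-id} to reduce to showing that $\mathbb{G}_\Omega^{1-s}(h-w)\equiv0$ forces $h=w$.

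The one place you genuinely diverge is the final injectivity step. The paper first applies $\mathbb{G}_\Omega^s$ to pass from $\mathbb{G}_\Omega^{1-s}(h-w)=0$ to $\mathbb{G}_\Omega^{1}(h-w)=0$ via \eqref{compo} (which requires the observation that $\mathbb{G}_\Omega^{1-s}h\in L^1(\Omega,\delta\,dx)$ so that $\mathbb{G}_\Omega^s$ can legitimately act), and then tests against $-\lapl\psi$ with $\psi\in C^\infty_c(\Omega)$. You skip the intermediate composition entirely and test $\mathbb{G}_\Omega^{1-s}(h-w)=0$ directly against $\rest^{1-s}\psi$. That is cleaner, at the modest price of needing the $(1-s)$-analogue of Lemma \ref{As-smooth} (i.e.\ $|\rest^{1-s}\psi|\le C\delta$ for $\psi\in C^\infty_c$), which the paper does not state but whose proof is verbatim the same, resting only on \eqref{esti-ef} and \eqref{spectral-coef}. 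One small improvement you could make: the weighted bound $\int_\Omega G^{1-s}_\Omega(x,\xi)\,\delta(x)\,dx\le C\delta(\xi)$, which you describe as a not-entirely-trivial consequence of \eqref{green-behav}, is obtained immediately from Lemma~\ref{inverse} applied to the principal eigenfunction $\varphi_1\asymp\delta$, giving $\mathbb{G}_\Omega^{1-s}\varphi_1=\lambda_1^{s-1}\varphi_1$; this is exactly the device the paper uses both here and in the proof of \eqref{blah}.
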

\begin{proof} Since $P(x,\cdot)$ is continuous, $h$ is well-defined. By \eqref{poissbound},
\[
\delta(x)|h(x)|\leq
C\int_{\partial\Omega}\frac{d|\zeta|(z)}{|x-z|^{N-2s}}
\]
so that $h\in L^1(\Omega,\delta(x)dx)$ and \eqref{blah} holds.
Pick now $\psi\in C^\infty_c(\Omega)$:
\[
\int_\Omega h(x)\,\Ds\psi(x)\;dx=
\int_{\partial\Omega}\left(\int_\Omega P_\Omega^s(x,z)\,\Ds\psi(x)\;dx\right)d\zeta(z)=0
\]
in view of Lemma \ref{pois-harm}. 
Conversely, let $h$ denote a nonnegative $s$-harmonic function.
By Definition \ref{harm-def} and by equation \eqref{adg}, 
we have for any $\psi\in C^\infty_c(\Omega)$
\begin{multline*}
0=\int_\Omega h(x)\Ds\psi(x)\:dx=
\int_\Omega h(x)\,\rest^{s-1}\circ(-\lapl)\,\psi(x)\:dx\ =\\
=\ \int_\Omega \left(\int_\Omega G_\Omega^{1-s}(x,\xi)h(\xi)\,d\xi\right)(-\lapl\psi(x)\:dx,
\end{multline*}
so that $\int_\Omega G_\Omega^{1-s}(x,\xi)h(\xi)d\xi$ is a (standard) nonnegative harmonic function.
In particular (cf. e.g. \cite[Corollary 6.15]{axler}),
there exists a finite Radon measure $\zeta\in\mathcal{M}(\partial\Omega)$ 
such that $\int_\Omega G_\Omega^{1-s}(x,\xi)h(\xi)d\xi=\int_{\partial\Omega} P_\Omega^1(x,y)d\zeta(y)$. 
We now exploit equation \eqref{pois-id} to deduce that 
\[
\int_\Omega G^{1-s}_\Omega(x,\xi)\left[h(\xi)-\int_{\partial\Omega}P^s_\Omega(\xi,y)d\zeta(y)\right]d\xi =0.
\]
Since 
\[
\int_\Omega \varphi_1(x)\ \left(\int_\Omega G_\Omega^{1-s}(x,\xi) h(\xi)\:d\xi\right)dx=\int_\Omega h\, \rest^{s-1}\varphi_1=\frac1{\lambda_1^{1-s}}\int_\Omega h\,\varphi_1 <\infty,
\]
it holds $\int_\Omega G_\Omega^{1-s}(x,\xi) h(\xi)d\xi\in C^\infty(\Omega)\cap L^1(\Omega,\delta(x)dx)$. Thanks to \eqref{green-behav}, we are allowed to let $G_\Omega^{s}$ act on it. 
By \eqref{compo}, this leads to 
\[
\int_\Omega G^{1}_\Omega(x,\xi)\left[h(\xi)-\int_{\partial\Omega}P^s_\Omega(\xi,y)\;d\zeta(y)\right]d\xi =0.
\]
Take at last $\psi\in C^\infty_c(\Omega)$ and $\varphi= (\left.-\lapl\right\vert_\Omega)^{-1}\psi$. Then,
\[
0=
\int_\Omega \varphi(x)\left[\int_\Omega G^{1}_\Omega(x,\xi)\left[h(\xi)-\int_{\partial\Omega}P^s_\Omega(\xi,y)d\zeta(y)\right]\:d\xi\right]dx=
\int_\Omega \psi(\xi)\left[h(\xi)-\int_{\partial\Omega}P^s_\Omega(\xi,y)\;d\zeta(y)\right]d\xi
\]
and so \eqref{repres} holds a.e. and in fact everywhere thanks to Lemma \ref{clas-harm} below.
\end{proof}

\begin{lem}\label{clas-harm}
Take $\alpha>0$ such that $2s+\alpha\not\in\N$ and $f\in C^{\alpha}_{loc}(\Omega)$. If $u\in L^1(\Omega,\delta(x)dx)$ solves 
\[
\Ds u = f \qquad\text{in $\mathcal D'(\Omega)$,}
\]
then $u\in C^{2s+\alpha}_{loc}(\Omega)$, the above equation holds pointwise, and given any compact sets $K\subset\subset K'\subset\subset\Omega$, there exists a constant $C=C(s,N,\alpha,K,K',\Omega)$ such that
\[
\Vert u \Vert_{C^{2s+\alpha}(K)} \le C \left(\Vert f \Vert_{C^{\alpha}(K')}+\Vert u \Vert_{L^1(\Omega,\delta(x)dx)}\right).
\]
Similarly, if $f\in L^\infty_{loc}(\Omega)$ and $\alpha\in(0,2s)$,
\[
\Vert u \Vert_{C^\alpha(K)} \le C \left(\Vert f \Vert_{L^\infty(K')}+\Vert u \Vert_{L^1(\Omega,\delta(x)dx)}\right).
\]
In particular, if $h$ is $s$-harmonic, then $h\in C^\infty(\Omega)$ and the equality
$\Ds h(x)=0$ holds at every point $x\in\Omega$.
\end{lem}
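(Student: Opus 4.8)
The plan is to transfer the equation to the classical Laplacian, invoke its interior theory, and transfer back. Set $w:=\Dsmu u=\mathbb{G}^{1-s}_\Omega u$, i.e. $w(x)=\int_\Omega G^{1-s}_\Omega(x,y)\,u(y)\,dy$; the Green bound \eqref{green-behav} yields the sharp estimate $\int_\Omega\delta(x)\,G^{1-s}_\Omega(x,y)\,dx\le C\,\delta(y)$ for all $y\in\Omega$, so $w$ is well defined, $w\in L^1(\Omega,\delta(x)dx)$ with $\|w\|_{L^1(\Omega,\delta dx)}\le C\|u\|_{L^1(\Omega,\delta dx)}$, and $\|w\|_{L^1(K')}\le C\|u\|_{L^1(\Omega,\delta dx)}$ for $K'\subset\subset\Omega$. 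Moreover $-\lapl w=f$ in $\mathcal D'(\Omega)$: for $\psi\in C^\infty_c(\Omega)$, Fubini (licit by that weighted bound) and Lemma \ref{inverse} with exponent $1-s$ give $\int_\Omega w(-\lapl\psi)=\int_\Omega u\,\Dsmu(-\lapl\psi)=\int_\Omega u\,\Ds\psi=\int_\Omega f\psi$, the second step being \eqref{adg} and the third the hypothesis. Classical interior Schauder estimates then give $w\in C^{2+\alpha}_{loc}(\Omega)$ with $\|w\|_{C^{2+\alpha}(K)}\le C(\|f\|_{C^\alpha(K')}+\|w\|_{L^1(K')})\le C(\|f\|_{C^\alpha(K')}+\|u\|_{L^1(\Omega,\delta dx)})$; and when $f\in L^\infty_{loc}$, $\alpha\in(0,2s)$, the interior $W^{2,p}$ estimates plus Sobolev embedding give $w\in C^{1,\beta}_{loc}$ for all $\beta<1$, in particular $w\in C^{\alpha+2(1-s)}_{loc}(\Omega)$ (here $\alpha+2(1-s)<2$), with the analogous bound in terms of $\|f\|_{L^\infty(K')}$.

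To transfer back I would first establish that $u=\Dums w$, interpreted via the pointwise formula for $\Dums$ analogous to \eqref{As2}, with jumping kernel and killing measure defined by \eqref{jkappa} with $s$ replaced by $1-s$ (legitimate since $w\in C^{2+\alpha}_{loc}\cap L^1(\Omega,\delta dx)$); this identity comes from testing against $\psi\in C^\infty_c(\Omega)$ and using the symmetry of the kernel together with $\mathbb{G}^s_\Omega\mathbb{G}^{1-s}_\Omega=\mathbb{G}^1_\Omega$ (from \eqref{compo}), $\Dums\psi=\mathbb{G}^s_\Omega(-\lapl\psi)$ (analogue of \eqref{adg}), and $\mathbb{G}^1_\Omega(-\lapl\psi)=\psi$. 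Fixing an interior ball $B\subset\subset B'\subset\subset\Omega$, the crucial observation is that on $B$ one has $p_\Omega(t,x,y)=p_{\R^N}(t,x,y)-r(t,x,y)$ where, by \eqref{hkb} and interior parabolic estimates, $r$ is smooth on $(0,\infty)\times B\times B$ and exponentially small as $t\downarrow0$; hence the jumping kernel of $\Dums$ equals $c_{N,s}|x-y|^{-(N+2(1-s))}$ plus a $C^\infty$ remainder on $B\times B$, and its killing measure is $C^\infty$ on $B$. Splitting the defining integral of $\Dums w$ into its part over $B$ and its part over $\Omega\setminus B$ (the latter finite and $C^\infty$ on $\tfrac12 B$, since $J^{1-s}(x,y)\le C_B\,\delta(y)$ there), one obtains on $\tfrac12 B$ that $u=c_{N,s}(-\lapl)^{1-s}(\chi w)+(\text{lower-order and }C^\infty\text{ terms})$ for a cutoff $\chi\in C^\infty_c(B)$ equal to $1$ near $\tfrac12 B$; since the $\R^N$ fractional Laplacian $(-\lapl)^{1-s}$ loses exactly $2(1-s)$ interior derivatives and $\chi w\in C^{2+\alpha}(\R^N)$, this gives $u\in C^{2s+\alpha}(\tfrac14 B)$ (resp. $C^\alpha$), with the stated estimates; a finite covering of $K$ concludes. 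The pointwise identity $\Ds u=f$ then follows since $u\in C^{2s+\eps}_{loc}\cap L^1(\Omega,\delta dx)$ makes \eqref{As2} meaningful and its (locally continuous) value must coincide with the distribution $f\in C^\alpha_{loc}$ at every point; finally, if $f\equiv0$ then $w$ is harmonic, hence $C^\infty$, so the argument yields $h\in C^\infty(\Omega)$ with $\Ds h\equiv 0$.

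The hard part is the transfer-back step: making rigorous the identity $u=\Dums w$ for $u$ merely in $L^1(\Omega,\delta(x)dx)$ — which requires controlling the nonlocal tails through the weighted integrability of $w$, itself relying on the sharp Green estimate above — and, more substantially, justifying that in the interior $\Dums$ is a smoothing perturbation of the translation-invariant operator $(-\lapl)^{1-s}$, a fact resting on the Gaussian heat-kernel bounds \eqref{hkb} and on interior estimates for the difference $p_\Omega-p_{\R^N}$. The remaining ingredients are standard elliptic and potential theory.
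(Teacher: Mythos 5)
Your plan reaches the same pivot as the paper's proof: set $v=\Dsmu u=\mathbb G^{1-s}_\Omega u$, verify $v\in L^1(\Omega,\delta\,dx)$ via the estimate $\int_\Omega\delta(x)G^{1-s}_\Omega(x,y)\,dx\le C\delta(y)$, deduce $-\lapl v=f$ in $\mathcal D'(\Omega)$ by duality through \eqref{adg} and Lemma \ref{inverse}, and apply interior Schauder (resp.\ Calder\'on--Zygmund) to get $v\in C^{2+\alpha}_{loc}$ (resp.\ $C^{1,\beta}_{loc}$ for all $\beta<1$). The two proofs part ways at the transfer-back step. The paper writes the Balakrishnan formula $u=\frac{s}{\Gamma(1-s)}\int_0^\infty t^{s-1}\frac{v-e^{\trest}v}{t}\,dt$, constructs a whole-space reference pair $(\super u,\super v)$ by convolving an extension $\super f$ of $f$ with Riesz kernels, and controls $u-\super u$ by splitting off the term in $v-\super v$ (harmonic in $K'$, hence $C^\infty$) and the semigroup difference $e^{\trest}v-e^{-t\lapl}\super v$ (a caloric function with harmonic initial trace, handled by interior parabolic regularity). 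You instead represent $u=\Dums v$ by the pointwise integro-differential formula of \eqref{As2}-type, decompose $J^{1-s}(x,y)=c_{N,1-s}|x-y|^{-(N+2-2s)}+R(x,y)$ on interior balls using $p_\Omega=p_{\R^N}-r$ with $r$ smooth and exponentially small as $t\downarrow0$ away from $\partial\Omega$, and after localizing with a cutoff invoke the mapping $(-\lapl)^{1-s}:C^{2+\alpha}(\R^N)\to C^{2s+\alpha}(\R^N)$. Both routes rest on the same fact --- the killed heat kernel agrees with the free one to infinite order as $t\to0$ inside $\Omega$ --- but the paper exploits it through parabolic estimates on the semigroup difference, while you exploit it through the singular integral kernel. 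Your route is cleaner if one is willing to cite the H\"older mapping theory for the full-space fractional Laplacian; the paper's route is more self-contained and avoids both that import and the integer-exponent bookkeeping you would have to handle (you need $2s+\alpha\notin\N$, and in the $L^\infty$ case you implicitly need $\alpha+2(1-s)\neq1$, i.e.\ $\alpha\neq 2s-1$, which is not part of the hypotheses and would require a small interpolation detour). As you note yourself, your write-up is a plan rather than a complete argument --- the kernel decomposition, the well-definedness and self-adjointness of the pointwise $\Dums$ used to establish $u=\Dums v$, and the smoothness of the killing measure $\kappa^{1-s}$ in the interior would all need to be carried out in detail --- but the strategy is sound and would yield the stated estimates.
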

\begin{proof} We only prove the former inequality, the proof of the latter follows mutatis mutandis.
Given $x\in\Omega$, let
\[
v(x)=\int_\Omega G_\Omega^{1-s}(x,y)u(y)\;dy.
\]
Observe that $v$ is well-defined and 
\begin{equation}\label{esti-l1d}
\Vert v\Vert_{L^1(\Omega,\delta(x)dx)} \le C(\Omega,N,s)\Vert u\Vert_{L^1(\Omega,\delta(x)dx)}
\end{equation}
Indeed, letting $\varphi_1>0$ denote an eigenvector associated to the principal eigenvalue of the Laplace operator, it follows from the Fubini's theorem and Lemma \ref{inverse} that
\[
\int_\Omega \varphi_1(x)\int_\Omega G_\Omega^{1-s}(x,y)\left\vert u(y)\right\vert dy\, dx = \lambda_1^{s-1}\int_\Omega \left\vert u(y)\right\vert\varphi_1(y)\, dy. 
\]
In addition, $-\lapl v= f$ in $\mathcal D'(\Omega)$, since for $\varphi\in C^\infty_c(\Omega)$,
\[
\int_\Omega v\:(-\lapl)\varphi = 
\int_{\Omega}u\:\Ds\varphi,
\]
thanks to the Fubini's theorem, equation \eqref{adg}, Lemma \ref{inverse} and Definition \ref{harm-def}. 
%By standard elliptic regularity and \eqref{esti-l1d}, we have
%\begin{equation}\label{esti-v}
%\Vert v \Vert_{C^{2+\alpha}(K)}\le C \left(\Vert f \Vert_{C^{\alpha}(K)}+\Vert v \Vert_{L^{1}_\delta(\Omega)}\right)\le C \left(\Vert f \Vert_{C^{\alpha}(K)}+\Vert u \Vert_{L^{1}_\delta(\Omega)}\right),
%\end{equation}
%if $\alpha\not\in\N$ (if not, a similar inequality holds for $\Vert v \Vert_{C^{2+\beta}(K)}$, $\beta<\alpha$).
Observe now that if $\varphi\in C^\infty_c(\Omega)$, then %and $P_t^\Omega$ denotes the semi-group generated by the Dirichlet Laplacian, then
\[
\Dums\varphi=\frac{s}{\Gamma(1-s)}\int_0^{+\infty}t^{s-1}
\left(\frac{\varphi-e^{\trest}\varphi}{t}\right)dt.
\]
The above identity is straightforward if $\varphi$ is an eigenfunction and remains true for $\varphi\in C^\infty_c(\Omega)$ by density, using the fast decay of spectral coefficients, see \eqref{spectral-coef}. 
So,
\begin{multline*}
\int_\Omega u\varphi\:dx= 
\int_\Omega v\Dums\varphi\:dx\ =\\
=\ \frac{s}{\Gamma(1-s)}\int_{\Omega}\int_0^\infty
v\,t^{s-1}\left(\frac{\varphi-e^{\trest}\varphi}{t}\right)dt\:dx\ =\\
=\ \frac{s}{\Gamma(1-s)}\int_{\Omega}\int_0^\infty 
\varphi t^{s-1}\left(\frac{v-e^{\trest}v}{t}\right)dt\:dx
\end{multline*}
and 
\[
u = \frac{s}{\Gamma(1-s)}\int_0^{+\infty}t^{s-1}
\left(\frac{v-e^{\trest}v}{t}\right)dt.
\]
Choose $\super f\in C^\alpha_c(\R^N)$ such that $\super f=f$ in $K'$, $\Vert \super f\Vert_{C^{\alpha}(\R^N)}\le C \Vert f\Vert_{C^{\alpha}(K')}$ and let
\[
\super u(x) = c_{N,s}\int_{\R^N}\vert x-y\vert^{-(N-2s)}\super f(y)\;dy
\]
solve $(-\lapl)^s \super u = \super f$ in $\R^N$. It is well-known (see e.g. \cite{silvestre}) that $\Vert \super u\Vert_{C^{2s+\alpha}(\R^N)}\le C \Vert \super f\Vert_{C^{\alpha}(\R^N)}$, for a constant $C$ depending only on $s,\alpha,N$ and the measure of the support of $\super f$. It remains to estimate $u-\super u$. Letting 
\[
\super v(x) = c_{N,1-s}\int_{\R^N} \vert x-y\vert^{-(N-2(1-s))}\super u(y)\;dy,
\]
we have as previously that $-\lapl\super v=\super f$ and
\[
\super u = \frac{s}{\Gamma(1-s)}\int_0^{+\infty}t^{s-1}
\left(\frac{\super v-e^{-t\lapl}\super v}{t}\right)dt,
\]
where this time $e^{-t\lapl}\super v(x)= \frac1{(4\pi t)^{N/2}}\int_{\R^N}e^{-\frac{\vert x-y\vert^2}{4t}}\super v(y)\;dy$.
Hence,
\[
\frac{\Gamma(1-s)}s(u-\super u) = \int_0^{+\infty}t^{s-1}
\left(\frac{(v-\super v)-e^{-t\lapl}(v-\super v)}{t}\right)dt + \int_0^{+\infty}t^{s-1}
\left(\frac{e^{\trest}v-e^{-t\lapl}\super v}{t}\right)dt.
\]
Fix a compact set $K''$ such that $K\subset\subset K''\subset\subset K'$.
Since $v-\super v$ is harmonic in $K'$,
\[\Vert v-\super v\Vert_{C^{2s+\alpha+2}(K'')}\le C\Vert v-\super v\Vert_{L^1(K')}\le C\Vert u\Vert_{L^1(\Omega,\delta(x)dx)}
\]
By parabolic regularity,
\[
\left\Vert\int_0^{+\infty}t^{s-1}
\left(\frac{(v-\super v)-e^{-t\lapl}(v-\super v)}{t}\right)dt
\right\Vert_{C^{2s+\alpha}(K)}\le C\Vert u\Vert_{L^1(\Omega,\delta(x)dx)}.
\]
In addition, the function $w=e^{\trest}v-e^{-t\lapl}\super v$ solves the heat equation inside $\Omega$ with initial condition $w(0,\cdot)=v-\super v$. Since $\lapl (v-\super v)=0$ in $K'$, it follows from parabolic regularity again that $w(t,x)/t$ remains bounded in $C^{2s+\alpha}(K)$ as $t\to0^+$, so that again
\[
\left\Vert\int_0^{+\infty}t^{s-1}
\left(\frac{e^{\trest}v-e^{-t\lapl}\super v}{t}\right)dt\right\Vert_{C^{2s+\alpha}(K)}\le C\Vert u\Vert_{L^1(\Omega,\delta(x)dx)}.
\]

\end{proof}

\begin{lem}\label{u-to-lapl} Take $\alpha>0$, $\alpha\not\in\N$,
and 
$u\in C^{2s+\alpha}_{loc}(\Omega)\cap L^1(\Omega,\delta(x)dx)$.
Given any compact set $K\subset\subset K'\subset\subset\Omega$, there exists a constant $C=C(s,N,\alpha,K, K',\Omega)$ such that
\[
\|\Ds u\|_{C^\alpha(K)}\leq
C\left(\|u\|_{C^{2s+\alpha}(K')} +
\|u\|_{L^1(\Omega,\delta(x)dx)}\right).
\]
\end{lem}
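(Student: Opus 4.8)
The plan is to mirror the structure of the proof of Lemma~\ref{clas-harm}, but in the reverse direction: there one recovered interior regularity of $u$ from that of $f=\Ds u$; here we want to bound $\Ds u$ in $C^\alpha(K)$ in terms of $u$ in $C^{2s+\alpha}(K')$ and the weighted $L^1$ norm. The key object is again the auxiliary function $v(x)=\int_\Omega G_\Omega^{1-s}(x,y)u(y)\,dy$, which by the Fubini/Lemma~\ref{inverse} computation in Lemma~\ref{clas-harm} satisfies $\|v\|_{L^1(\Omega,\delta\,dx)}\le C\|u\|_{L^1(\Omega,\delta\,dx)}$, solves $-\lapl v=\Ds u$ in $\mathcal D'(\Omega)$, and admits the representation
\[
u=\frac{s}{\Gamma(1-s)}\int_0^\infty t^{s-1}\left(\frac{v-e^{\trest}v}{t}\right)dt .
\]

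First I would reduce to $\R^N$ by the same freezing trick: extend $u|_{K'}$ to $\super u\in C^{2s+\alpha}_c(\R^N)$ with $\|\super u\|_{C^{2s+\alpha}(\R^N)}\le C\|u\|_{C^{2s+\alpha}(K')}$, and set $\super f=(-\lapl)^s\super u$, so that $\|\super f\|_{C^\alpha(\R^N)}\le C\|\super u\|_{C^{2s+\alpha}(\R^N)}$ by the standard full-space estimate (Silvestre \cite{silvestre}); this already controls $\Ds u=\super f$ "up to a harmonic error". Define $\super v=c_{N,1-s}\,|x|^{-(N-2(1-s))}*\super u$, so that $-\lapl\super v=\super f$ and $\super u=\frac{s}{\Gamma(1-s)}\int_0^\infty t^{s-1}\frac{\super v-e^{-t\lapl}\super v}{t}\,dt$. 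Subtracting the two representations gives, exactly as in Lemma~\ref{clas-harm},
\[
\frac{\Gamma(1-s)}{s}(\Ds u-\super f)=-\lapl\!\left[\int_0^\infty t^{s-1}\frac{(v-\super v)-e^{-t\lapl}(v-\super v)}{t}\,dt+\int_0^\infty t^{s-1}\frac{e^{\trest}v-e^{-t\lapl}\super v}{t}\,dt\right],
\]
up to identifying $-\lapl$ of each summand with $\Ds$ acting through $G^{1-s}_\Omega$; the point is that $v-\super v$ is harmonic in $K'$ (since $-\lapl v=-\lapl\super v=\Ds u$ there) and $w(t,\cdot)=e^{\trest}v-e^{-t\lapl}\super v$ solves the heat equation in $\Omega$ with initial datum $v-\super v$. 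Applying interior elliptic estimates to get $\|v-\super v\|_{C^{2s+\alpha+2}(K'')}\le C\|v-\super v\|_{L^1(K')}\le C\|u\|_{L^1(\Omega,\delta\,dx)}$ on an intermediate $K\subset\subset K''\subset\subset K'$, and then parabolic regularity to see that both time-integrals are bounded in $C^\alpha(K)$ by $C\|u\|_{L^1(\Omega,\delta\,dx)}$ (the integrand near $t=0^+$ being controlled because $\lapl(v-\super v)=0$ on $K'$, near $t=\infty$ by the weighted-$L^1$ decay of the heat semigroup), one collects
\[
\|\Ds u\|_{C^\alpha(K)}\le \|\super f\|_{C^\alpha(K)}+C\|u\|_{L^1(\Omega,\delta\,dx)}\le C\left(\|u\|_{C^{2s+\alpha}(K')}+\|u\|_{L^1(\Omega,\delta\,dx)}\right),
\]
which is the claim.

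The main obstacle I expect is the bookkeeping at $t\to0^+$ in the second integral: one must justify that $w(t,x)/t$ stays bounded in $C^{2s+\alpha}(K)$, which uses that the initial datum $v-\super v$ is smooth (indeed harmonic) on a neighbourhood $K'$ of $K$, so that the heat flow does not feel the boundary $\partial\Omega$ on the relevant time scale — this is the place where the $C^{1,1}$ regularity of $\Omega$ and the heat-kernel bound \eqref{hkb} enter, exactly as in Lemma~\ref{clas-harm}. A secondary technical point is making rigorous the interchange of $-\lapl$ with the $t$-integral and the identification of $-\lapl\int_\Omega G^{1-s}_\Omega(x,y)(\cdot)\,dy$ with $\Ds(\cdot)$, which is handled by testing against $C^\infty_c(\Omega)$ functions and invoking \eqref{adg}, Lemma~\ref{inverse} and \eqref{green-behav} precisely as was done for the harmonic case. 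Everything else is a routine repetition of the estimates already carried out in the proof of Lemma~\ref{clas-harm}, run in the opposite direction.
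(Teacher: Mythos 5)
You've identified the right auxiliary function $v=\Dsmu u$ and the right facts about it, but the mirror-image re-run of the proof of Lemma~\ref{clas-harm} breaks at its crucial step: the claimed harmonicity of $v-\super v$. You extend $u|_{K'}$ to $\super u$ and then set $\super f=(-\lapl)^s\super u$ and $\super v=(-\lapl)^{-1}\super f$, and you assert that $v-\super v$ is harmonic in $K'$ ``since $-\lapl v=-\lapl\super v=\Ds u$ there.'' That last equality is false. On $K'$, $-\lapl\super v=\super f=(-\lapl)^s\super u$ depends nonlocally on $\super u$ throughout $\R^N$, while $\Ds u|_{K'}$ depends nonlocally on $u$ throughout $\Omega$; nothing forces these to agree just because $u=\super u$ on $K'$. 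In Lemma~\ref{clas-harm} the extension is taken of $f$, not of $u$, and this is exactly what makes $-\lapl v=f=\super f=-\lapl\super v$ a pointwise identity on $K'$; extending $u$ instead destroys it. Since $v-\super v$ is not harmonic in $K'$, the interior elliptic estimate $\|v-\super v\|_{C^{2s+\alpha+2}(K'')}\lesssim\|v-\super v\|_{L^1(K')}$ and the short-time control of $w(t,\cdot)/t$ both fail, and the argument does not close. (There is also a secondary confusion: your displayed identity would require $\Ds u-\super f=-\lapl(u-\super u)$, whereas in fact $\Ds u-\super f=-\lapl(v-\super v)$, since $\Ds u=-\lapl v\ne-\lapl u$.)

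The paper's own proof avoids all of this and is much shorter. With $w:=\Dsmu u$, one has $\Dums w=u\in C^{2s+\alpha}_{loc}(\Omega)$ and $\|w\|_{L^1(\Omega,\delta(x)dx)}\le C\|u\|_{L^1(\Omega,\delta(x)dx)}$, so Lemma~\ref{clas-harm}, applied with $s$ replaced by $1-s$ and the H\"older exponent $2s+\alpha$ (note $2(1-s)+(2s+\alpha)=2+\alpha\notin\N$ since $\alpha\notin\N$), gives $\|w\|_{C^{2+\alpha}(K)}\le C(\|u\|_{C^{2s+\alpha}(K')}+\|u\|_{L^1(\Omega,\delta(x)dx)})$; then $\Ds u=(-\lapl)w$ in $\mathcal D'(\Omega)$ immediately yields $\|\Ds u\|_{C^\alpha(K)}\le\|w\|_{C^{2+\alpha}(K)}$. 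If you want to salvage your plan, the object to extend to $\R^N$ is $v|_{K'}$ (equivalently, $\Ds u|_{K'}$), not $u|_{K'}$; but to control that extension you first need $v\in C^{2+\alpha}_{loc}$, which is precisely the intermediate output of the paper's argument, so the fix collapses back to the paper's route.
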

\begin{proof} With a slight abuse of notation, we write
\[
\Dsmu u(x) = \int_\Omega G_\Omega^{1-s}(x,y)\,u(y)\:dy.
\]
By Lemma \ref{clas-harm} we have
\begin{multline*}
\|\Dsmu u\|_{C^{2+\alpha}(K)}\leq C\left(
\|u\|_{C^{2s+\alpha}(K')}+\|\Dsmu u\|_{L^1(\Omega,\delta(x)dx)}\right)\ \leq \\
\leq\ C\left(
\|u\|_{C^{2s+\alpha}(K')}+\|u\|_{L^1(\Omega,\delta(x)dx)}\right).
\end{multline*}
%where we have used that
%\[
%\|\Dsmu u\|_{L^1(\Omega,\delta(x)dx)}\leq
%c\int_\Omega \left|\Dsmu u\right|\varphi_1\leq
%c\int_\Omega |u|\,\Dsmu\varphi_1=
%\frac{c}{\lambda_1^{1-s}}\int_\Omega|u|\varphi_1\leq
%c\|u\|_{L^1(\Omega,\delta(x)dx)}.
%\]
Obviously it holds also
\[
\|(-\lapl)\circ\Dsmu u\|_{C^\alpha(K)}\leq\|\rest^{s-1}u\|_{C^{2+\alpha}(K)}.
\]
By \eqref{adg},
\[
(-\lapl)\circ\Dsmu u
=\Ds u \qquad \hbox{in }\mathcal{D}'(\Omega),
\]
which concludes the proof.
\end{proof}

\begin{prop}\label{Lp-reg} Let $f\in L^1(\Omega,\delta(x)dx)$ and $u\in L^1_{loc}(\Omega)$. %Suppose that $\Omega$ is $C^{1,1}$.
The function
\[
u(x)=\int_\Omega G_\Omega^s(x,y)\,f(y)\;dy
\]
belongs to $L^p(\Omega,\delta(x)dx)$ for any $p\in\left[1,\dfrac{N+1}{N+1-2s}\right)$.
\end{prop}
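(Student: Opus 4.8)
The plan is to use the sharp two-sided bound \eqref{green-behav} for $G_\Omega^s$ to reduce the problem to a weak-type (Marcinkiewicz) estimate for the integral operator with kernel $|x-y|^{-(N-2s)}\bigl(1\wedge\delta(x)\delta(y)|x-y|^{-2}\bigr)$ against the measure $\delta(y)\,dy$. Since it suffices to treat $f\ge0$ and, by splitting, we may assume $\|\delta f\|_{L^1(\Omega)}=1$, the target is to show that $u$ lies in the Lorentz space $L^{p_0,\infty}(\Omega,\delta(x)dx)$ with $p_0=\frac{N+1}{N+1-2s}$, from which membership in $L^p(\Omega,\delta(x)dx)$ for every $p<p_0$ follows by the standard interpolation/embedding $L^{p_0,\infty}\cap L^1\hookrightarrow L^p$ on a finite measure space (note $\delta(x)dx$ is a finite measure since $\Omega$ is bounded).

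First I would record the pointwise bound, valid for $x\in\Omega$,
\[
0\le u(x)\le C\int_\Omega \frac{1}{|x-y|^{N-2s}}\left(1\wedge\frac{\delta(x)\delta(y)}{|x-y|^2}\right)f(y)\,dy
\le C\,\delta(x)\int_\Omega\frac{\delta(y)f(y)}{|x-y|^{N+2-2s}\vee \delta(x)^{?}}\cdots
\]
— more precisely, using the elementary inequality $1\wedge\frac{ab}{r^2}\le \frac{a}{r}$ (with $a=\delta(x)$, $b=\delta(y)\le \mathrm{diam}\,\Omega$, $r=|x-y|$) one gets
\[
\frac{u(x)}{\delta(x)}\le C\int_\Omega \frac{\delta(y)f(y)}{|x-y|^{N+1-2s}}\,dy =: C\,(I_\beta \nu)(x),\qquad \beta:=N+1-2s,
\]
where $\nu=\delta f\,dx$ is a probability measure on $\Omega\subset\R^N$. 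Thus $u/\delta$ is controlled by the Riesz-type potential $I_\beta\nu$ of order $N-\beta=2s-1$ (which may be negative, but that is irrelevant for the weak-type bound). The key step is then the classical fact that for a finite measure $\nu$ on $\R^N$ and $0<\beta<N+1$ (here $\beta<N+1$ since $s>0$), the potential $x\mapsto\int|x-y|^{-\beta}d\nu(y)$ belongs to $L^{q,\infty}(\Omega)$ with respect to Lebesgue measure, where $q=\frac{N}{\beta}$ when $\beta<N$, and more robustly: $|\{x\in\Omega: I_\beta\nu(x)>\lambda\}|\le C\lambda^{-N/\beta}\|\nu\|^{N/\beta}$ — this is a direct consequence of estimating, for the super-level set, the contribution of $\{|x-y|<\rho\}$ and $\{|x-y|\ge\rho\}$ and optimizing in $\rho$, exactly as in the proof of the weak-type bound for the Riesz potential; I would carry this out by hand in one short paragraph since $\beta$ here can exceed $N$ and one must be a little careful (when $\beta\ge N$ the "near" part is handled using that $\nu$ is finite and $\Omega$ bounded, and the exponent that comes out is $N/\beta$ only if $\beta\le N$, otherwise the potential is in $L^{q,\infty}$ for all $q$).

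The arithmetic that closes the argument: we need $u\in L^p(\delta\,dx)$, i.e. $\int_\Omega (u/\delta)^p\,\delta^{p+1}\,dx<\infty$; bounding $\delta^{p+1}\le C$ this reduces to $u/\delta\in L^p(\Omega,dx)$, and $u/\delta\le C\,I_\beta\nu\in L^{q,\infty}(\Omega,dx)$ with $q=\frac{N}{N+1-2s}$. Hmm — one checks $q<1$ here, which is too weak; the honest exponent must come from keeping the factor $\delta(x)$ rather than discarding it. So instead I would weight: writing $u(x)\le C\,\delta(x)\,(I_\beta\nu)(x)$ and integrating $\int_\Omega u^p\,\delta\,dx\le C\int_\Omega \delta(x)^{p+1}(I_\beta\nu)^p\,dx$, one uses Hölder with the finite measure $dx$ on $\Omega$ to trade a power of $\delta$ (bounded) for room, and then the correct statement is that $\delta^{1+1/p}I_\beta\nu\in L^p$; matching scaling $N-\beta+1=2s$ gives precisely the Serrin-type exponent $p<\frac{N+1}{N+1-2s}$. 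The \emph{main obstacle} is getting this bookkeeping of the two $\delta$-weights exactly right so that the endpoint $\frac{N+1}{N+1-2s}$ appears and is excluded — i.e. proving the sharp weak-type bound for the weighted potential $\delta(x)\int_\Omega|x-y|^{-(N+1-2s)}\delta(y)\,d|f|(y)$ on the measure space $(\Omega,\delta\,dx)$; everything else (reduction to $f\ge0$, finiteness of $\delta\,dx$, interpolation $L^{p_0,\infty}\cap L^1\subset L^p$) is routine. I expect the cleanest route is in fact to invoke a known weighted weak-type estimate for $G_\Omega^s$ itself (of the same type used for the Green function of $(-\lapl)$ and of the restricted fractional Laplacian in the measure-data literature), applied with the weight $\delta$, and then conclude by interpolation.
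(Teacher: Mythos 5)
Your overall plan (reduce to a two--weight weak--type estimate for $\mathbb G^s_\Omega$ from $L^1(\Omega,\delta\,dx)$ to $L^{p_0,\infty}(\Omega,\delta\,dx)$, then interpolate using that $\delta\,dx$ is finite) is a legitimate alternative strategy to what the paper does. But as written there are two genuine problems, and the proposal does not close.

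First, the ``elementary inequality'' $1\wedge\frac{ab}{r^2}\le\frac{a}{r}$ is false: take $a=0.01$, $b=1$, $r=0.5$, so $\frac{ab}{r^2}=0.04\le 1$ is the left side while $\frac{a}{r}=0.02$. The correct elementary bound $1\wedge t\le t$ gives
\[
G^s_\Omega(x,y)\ \lesssim\ \frac{\delta(x)\,\delta(y)}{|x-y|^{N+2-2s}},
\]
i.e. $\beta=N+2-2s$, not $N+1-2s$. In fact the pointwise bound you use, $G^s_\Omega(x,y)\lesssim \delta(x)\delta(y)/|x-y|^{N+1-2s}$, is simply false near the diagonal: on the region $\delta(x)\delta(y)\ge|x-y|^2$ the kernel is $\asymp |x-y|^{-(N-2s)}$ while your bound is $\asymp |x-y|^{-(N-1-2s)}$, which is strictly smaller when $|x-y|$ is small. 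The exponent $N+1-2s$ appears to have been chosen to make the Riesz--potential arithmetic match the target $\frac{N+1}{N+1-2s}$, but it is not supported by the kernel estimate.

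Second, and independently of this, the Riesz--potential route you set up does not yield the sharp exponent: as you yourself observe, discarding the $\delta(x)$ weight produces $L^{q,\infty}(\Omega,dx)$ with $q=N/\beta<1$ (for $s<1/2$ even with your wrong $\beta$, and always with the correct $\beta=N+2-2s$). You then note that one must keep both $\delta$--weights and conjecture that the right two--weight weak--type estimate for $G^s_\Omega$ ``should be known.'' That may well be true, but as stated this is the entire content of the proposition; invoking it as a black box is not a proof, and the weighted bound is precisely where the exponent $\frac{N+1}{N+1-2s}$ is generated. The paper, by contrast, avoids any weak--type machinery: it applies Jensen's inequality with respect to the probability measure proportional to $\delta(y)f(y)\,dy$, reduces to estimating $\sup_y\int_\Omega \bigl(G^s_\Omega(x,y)/\delta(y)\bigr)^p\,\delta(x)\,dx$, substitutes the sharp two--sided Green function estimate \eqref{green-behav}, flattens the boundary via Lemma~\ref{lemma39}, and computes the resulting integral directly after a three--piece decomposition in the tangential variable. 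That computation is where the constraint $p<\frac{N+1}{N+1-2s}$ emerges, and it is exactly the step your sketch defers to unnamed literature. To make your approach rigorous you would need to either prove the two--weight weak--type estimate (which amounts to a level--set version of the same kernel computation) or cite a precise reference for it; as it stands the argument has a gap at its decisive step, compounded by the incorrect pointwise kernel bound.
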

\begin{proof}
We start by applying the Jensen's Inequality, %which is nontrivial for $p<N/(N-2s)$,
\[
\left|\int_\Omega G_\Omega^s(x,y)\,f(y)\;dy\right|^p
\leq \|f\|^{p-1}_{L^1(\Omega,\delta(x)dx)}
\int_\Omega\left|\frac{G_\Omega^s(x,y)}{\delta(y)}\right|^p\delta(y)\,f(y)\;dy,
\]
so that
\[
\int_\Omega|u(x)|^p\,\delta(x)\;dx\leq \|f\|^p_{L^1(\Omega,\delta(x)dx)}
\sup_{y\in\Omega}\int_\Omega\left|\frac{G_\Omega^s(x,y)}{\delta(y)}\right|^p\delta(x)\;dx
\]
and by \eqref{green-behav} we have to estimate
\[
\sup_{y\in\Omega}\int_\Omega\frac1{\left|x-y\right|^{(N-2s)p}}\cdot
\frac{\delta(x)^{p+1}}{\left[|x-y|^2+\delta(x)\delta(y)\right]^p}\;dx
\]
Pick $\eps>0$. Clearly,
\[
\sup_{\{y\;:\;\delta(y)\ge\eps\}}\int_\Omega\frac1{\left|x-y\right|^{(N-2s)p}}\cdot
\frac{\delta(x)^{p+1}}{\left[|x-y|^2+\delta(x)\delta(y)\right]^p}\;dx\le C_\eps.
\]
%when the above $\sup$ is taken on $\{\delta(y)>\eps\}$
%then it is finite and controlled by a constant depending on $\eps$.
Thanks to Lemma \ref{lemma39}, we may now reduce to the case where the boundary is flat, i.e. when in a neighbourhood $A$ of a given point
$y\in\Omega$ such that $\delta(y)<\eps$, there holds $A\cap\partial\Omega\subseteq\{y_N=0\}$ and $A\cap\Omega\subseteq\{y_N>0\}$. Without loss of generality, we assume that $y=(0,y_N)$ and 
$x=(x',x_N)\in B\times(0,1)\subseteq\R^{N-1}\times\R$.
We are left with proving that
\[
\int_B dx'\int_0^1 dx_N \frac1{\left[|x'|^2+|x_N-y_N|^2\right]^{(N-2s)p/2}}\cdot
\frac{x_N^{p+1}}{\left[|x'|^2+|x_N-y_N|^2+x_Ny_N\right]^p}
\]
is a bounded quantity. Make the change of variables $x_N=y_N t$ and pass to
polar coordinates in $x'$, with $|x'|=y_N\rho$. Then, the above integral becomes
\begin{equation}\label{4576}
y_N^{-(N+1-2s)p+N+1}\int_0^{1/y_N}d\rho\int_0^{1/y_N}dt\;\frac{\rho^{N-2}}{\left[\rho^2+|t-1|^2\right]^{(N-2s)p/2}}
\cdot\frac{t^{p+1}}{\left[\rho^2+|t-1|^2+t\right]^p}.
\end{equation}
Now, we split the integral in the $t$ variable into $\int_0^{1/2}+\int_{1/2}^{3/2}+\int_{3/2}^{1/y_N}$.
Note that the exponent $-(N+1-2s)p+N+1$ is positive for $p<(N+1)/(N+1-2s)$.
We drop multiplicative constants in the computations that follow.
The first integral is bounded above by a constant multiple of
\[
\int_0^{1/y_N}d\rho\int_0^{1/2}dt\;\frac{\rho^{N-2}}{\left[\rho^2+1\right]^{(N-2s)p/2}}
\cdot\frac{t^{p+1}}{\left[\rho^2+1+t\right]^p}\lesssim
\int_0^{1/y_N}d\rho\;\frac{\rho^{N-2}}{\left[\rho^2+1\right]^{(N+2-2s)p/2}}
\]
which remains bounded as $y_N\downarrow 0$ since
\[
p\geq 1>\frac{N-1}{N+2-2s}\qquad\hbox{implies}\qquad (N+2-2s)p-N+2>1.
\]
The second integral is of the order of
\begin{align*}
& \int_0^{1/y_N}d\rho\int_{1/2}^{3/2}dt\;\frac{\rho^{N-2}}{\left[\rho^2+|t-1|^2\right]^{(N-2s)p/2}}\cdot\frac1{\left[\rho^2+1\right]^p} \ =\\
& =\ \int_0^{1/y_N}d\rho\int_0^{1/2}dt\;\frac{\rho^{N-2}}{\left[\rho^2+t^2\right]^{(N-2s)p/2}}\cdot\frac1{\left[\rho^2+1\right]^p} \\
& =\ \int_0^{1/y_N}d\rho\;\frac{\rho^{N-1-(N-2s)p}}{\left[\rho^2+1\right]^p}\int_0^{1/(2\rho)}dt\;\frac1{\left[1+t^2\right]^{(N-2s)p/2}} \\
& \lesssim\ \int_0^\infty d\rho\;\frac{\rho^{N-1-(N-2s)p}}{\left[\rho^2+1\right]^p}
\end{align*}
which is finite since $p<(N+1)/(N+1-2s)<N/(N-2s)$ implies $N-1-(N-2s)p>-1$ 
and $p\geq 1>N/(N+2-2s)$ implies $2p-N+1+(N-2s)p>1$.

We are left with the third integral which is controlled by
\begin{align*}
& \int_0^{1/y_N}d\rho\;\rho^{N-2}\int_{3/2}^{1/y_N}dt\;\frac{t^{p+1}}{\left[\rho^2+t^2\right]^{(N+2-2s)p/2}}\ \leq\\
& \leq\ \int_0^{1/y_N}d\rho\;\rho^{N-(N+1-2s)p}\int_{3/(2\rho)}^{1/(y_N\rho)}dt\;\frac{t^{p+1}}{\left[1+t^2\right]^{(N+2-2s)p/2}}\\
& \lesssim\ \int_0^{1/y_N}d\rho\;\rho^{N-(N+1-2s)p}.
\end{align*}
The exponent $N-(N+1-2s)p>-1$ since $p<(N+1)/(N+1-2s)$, so this third integral is bounded above by a constant multiple of
$y_N^{-1-N+(N+1-2s)p}$ which simplifies with the factor in front of \eqref{4576}.
\end{proof}

\section{Boundary behaviour}\label{bb-sect}

We first provide the boundary behaviour of the reference function $h_1$. Afterwards, in Proposition \ref{bound-cont} below, we will deal with the weighted trace left on the boundary by harmonic functions induced by continuous boundary data.

\begin{lem} Let $h_1$ be given by \eqref{h1}. There exists a constant $C=C(N,\Omega,s)>0$ such that
\begin{equation}\label{h1-behav}
\frac1C \delta^{-(2-2s)}\le h_1\le C \delta^{-(2-2s)}.
\end{equation}
\end{lem}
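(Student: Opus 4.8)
The plan is to estimate $h_1(x) = \int_{\partial\Omega} P^s_\Omega(x,y)\,d\sigma(y)$ directly using the pointwise bounds on the Poisson kernel from Lemma~\ref{lemma8}, namely
\[
\frac1C\,\frac{\delta(x)}{|x-y|^{N+2-2s}}\le P^s_\Omega(x,y)\le C\,\frac{\delta(x)}{|x-y|^{N+2-2s}},
\qquad x\in\Omega,\ y\in\partial\Omega.
\]
Integrating these over $y\in\partial\Omega$ against surface measure, matters reduce to showing that
\[
\delta(x)\int_{\partial\Omega}\frac{d\sigma(y)}{|x-y|^{N+2-2s}}\ \asymp\ \delta(x)^{-(2-2s)},
\]
with constants depending only on $N,s,\Omega$. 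Equivalently, $\int_{\partial\Omega}|x-y|^{-(N+2-2s)}\,d\sigma(y)\asymp\delta(x)^{-(3-2s)}$.

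First I would fix $x\in\Omega$, let $y_0\in\partial\Omega$ be a (nearest) boundary point with $|x-y_0|=\delta(x)$, and split $\partial\Omega$ into the near part $\Sigma_{\mathrm{near}}=\{y\in\partial\Omega:|y-y_0|\le r_0\}$ and the far part, where $r_0$ is a fixed small constant depending only on $\Omega$ (chosen so that $\partial\Omega$ is, after rotation, a $C^{1,1}$ graph over its tangent plane at $y_0$ on scale $r_0$). On the far part, $|x-y|$ is bounded below by a constant, so that piece of the integral is $O(1)$, which is negligible compared with $\delta(x)^{-(3-2s)}$ as $\delta(x)\to0$; for $\delta(x)$ bounded away from zero the whole statement \eqref{h1-behav} is trivial by continuity and positivity of $h_1$ (which follows from the lower Poisson bound), so the content is entirely in the regime $\delta(x)$ small. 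On the near part, I would use the graph coordinates to compare $d\sigma(y)$ with Lebesgue measure $dy'$ on the tangent hyperplane $\R^{N-1}$, and compare $|x-y|^2$ with $\delta(x)^2+|y'|^2$ up to multiplicative constants (the $C^{1,1}$ regularity controls the normal deviation of the graph by $C|y'|^2$, which is absorbed). This yields
\[
\int_{\Sigma_{\mathrm{near}}}\frac{d\sigma(y)}{|x-y|^{N+2-2s}}
\ \asymp\ \int_{|y'|\le r_0}\frac{dy'}{\bigl(\delta(x)^2+|y'|^2\bigr)^{(N+2-2s)/2}},
\]
and the right-hand side is computed by passing to polar coordinates in $\R^{N-1}$ and the substitution $|y'|=\delta(x)\rho$:
\begin{equation*}
\int_0^{r_0}\frac{\rho^{N-2}\,d\rho}{\bigl(\delta(x)^2+\rho^2\bigr)^{(N+2-2s)/2}}
=\delta(x)^{-(3-2s)}\int_0^{r_0/\delta(x)}\frac{\rho^{N-2}\,d\rho}{(1+\rho^2)^{(N+2-2s)/2}},
\end{equation*}
and the last integral converges as $\delta(x)\to0$ (the integrand decays like $\rho^{N-2-(N+2-2s)}=\rho^{-(4-2s)}$, integrable at infinity since $4-2s>1$, and is harmless at $0$), hence is bounded above and below by positive constants once $\delta(x)\le r_0/2$, say. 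Multiplying back by $\delta(x)$ gives $h_1(x)\asymp\delta(x)^{-(2-2s)}$.

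The only mildly delicate point — the main obstacle, such as it is — is the comparison $|x-y|^2\asymp\delta(x)^2+|y'|^2$ on the near part: one must check that the lower bound $|x-y|^2\ge c(\delta(x)^2+|y'|^2)$ holds uniformly, which uses that the segment from $x$ to its nearest boundary point is normal to $\partial\Omega$ together with the interior/exterior ball condition coming from $C^{1,1}$ regularity (this prevents $x$ from being close to a far-away piece of the boundary while $|y'|$ is large). Everything else is a routine change of variables and a convergent integral, so I would state these comparisons, invoke the $C^{1,1}$ structure of $\Omega$, and carry out the polar-coordinate computation above.
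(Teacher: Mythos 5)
Your proposal is correct and follows essentially the same route as the paper: reduce via the Poisson kernel bounds \eqref{poissbound} to estimating $\delta(x)\int_{\partial\Omega}|x-y|^{-(N+2-2s)}\,d\sigma(y)$, flatten the boundary near the nearest point (the paper invokes its Lemma \ref{lemma39}; you unfold the same $C^{1,1}$ graph argument inline), then pass to polar coordinates with the scaling $|y'|=\delta(x)\rho$ to extract the factor $\delta(x)^{-(3-2s)}$. You are somewhat more explicit than the paper about discarding the far part of $\partial\Omega$ and about the regime $\delta(x)$ bounded away from zero, but these are presentational, not substantive, differences.
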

\begin{proof}
Restrict without loss of generality to the case where $x$ lies in a neighbourhood of $\partial\Omega$.
Take $x^*\in\partial\Omega$ such that $|x-x^*|=\delta(x)$, 
which exists by compactness of $\partial\Omega$.
Take $\Gamma\subset\partial\Omega$ a neighbourhood of $x^*$ in the topology of $\partial\Omega$. By Lemma \ref{lemma39} in the Appendix,
we can think of $\Gamma\subset\{x_N=0\}$, $x^*=0$ and $x=(0,\delta(x))\in\R^{N-1}\times\R$ without loss of generality.
in such a way that it is possible to compute
\[
\int_\Gamma \frac{\delta(x)}{|x-z|^{N+2-2s}}\;d\sigma(z)
%=\int_\Gamma\frac{d\sigma(z)}{|x^*+\delta(x)\grad\delta(x^*)-z|^{N+2-2s}}\ \asymp\\
%\asymp\ \int_\Gamma\frac{d\sigma(z)}{[|\gamma(0)-\gamma(z')|^2+\delta(x)^2|]^{N/2+1-s}}
\asymp\int_{B_r}\frac{\delta(x)}{[|z'|^2+\delta(x)^2]^{N/2+1-s}}\;dz'.
\]
Recalling \eqref{poissbound}, we have reduced the estimate to 
\begin{align*}
& \int_{\partial\Omega}P^s_\Omega(x,z)\;d\sigma(z)\ \asymp \\
& \asymp\ \int_{B_r}\frac{\delta(x)}{[|z'|^2+\delta(x)^2]^{N/2+1-s}}\;dz'
\ \asymp\ \int_0^r\frac{\delta(x)\,t^{N-2}}{[t^2+\delta(x)^2]^{N/2+1-s}}\;dt \\
& =\ \int_0^{r/\delta(x)}\frac{\delta(x)^{N}t^{N-2}}{[\delta(x)^2t^2+\delta(x)^2]^{N/2+1-s}}\;dt\ \asymp\ \delta(x)^{2s-2}\int_0^{r/\delta(x)}\frac{t^{N-2}\;dt}{[t^2+1]^{N/2+1-s}}
\end{align*}
and this concludes the proof, since
\[
\int_0^{r/\delta(x)}\frac{t^{N-2}\;dt}{[t^2+1]^{N/2+1-s}}\asymp 1.
\]
\end{proof}

In the following we will use the notation
\[
\mathbb P_\Omega^sg\ :=\ \int_{\partial\Omega}P_\Omega^s(\cdot,\theta)\,g(\theta)\;d\sigma(\theta)
\]
where $\sigma$ denotes the Hausdorff measure on $\partial\Omega$,
whenever $g\in L^1(\Omega)$.

\begin{prop}\label{bound-cont} Let $\zeta\in C(\partial\Omega)$. Then, for any $z\in\partial\Omega$,
\begin{equation}
\frac{\mathbb P_\Omega^s \zeta(x)}{h_1(x)}\ \xrightarrow[x\to z]{x\in\Omega} \ \zeta(z)\qquad\hbox{uniformly on }\partial\Omega.
\end{equation}
\end{prop}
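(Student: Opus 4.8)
The plan is to reduce the claim to a local computation near a boundary point $z$, using the two-sided bounds \eqref{poissbound} on $P^s_\Omega$ and \eqref{h1-behav} on $h_1$, together with the continuity of $\zeta$. First I would fix $\eta>0$ and, by uniform continuity of $\zeta$, choose $r>0$ so that $|\zeta(\theta)-\zeta(z)|<\eta$ whenever $\theta,z\in\partial\Omega$ with $|\theta-z|<r$; crucially $r$ can be taken independent of $z$, which is what will eventually yield uniformity. Then I would split
\[
\mathbb P^s_\Omega\zeta(x)-\zeta(z)\,h_1(x)\ =\ \int_{\partial\Omega}P^s_\Omega(x,\theta)\bigl(\zeta(\theta)-\zeta(z)\bigr)\,d\sigma(\theta)
\]
into the contribution of $\{\theta:|\theta-z|<r\}$ and that of $\{\theta:|\theta-z|\ge r\}$. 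On the near part, the integrand is bounded in absolute value by $\eta\,P^s_\Omega(x,\theta)$, so after dividing by $h_1(x)$ this piece is at most $\eta\,\mathbb P^s_\Omega\mathbbm 1(x)/h_1(x)\le\eta$ (in fact $\mathbb P^s_\Omega\mathbbm 1=h_1$ exactly, by \eqref{h1}). On the far part, for $x$ close to $z$ we have $|x-\theta|\gtrsim r$, so by \eqref{poissbound} the integrand is bounded by $C\,\|\zeta\|_\infty\,\delta(x)\,r^{-(N+2-2s)}$, and integrating over $\partial\Omega$ gives a bound $C\,\|\zeta\|_\infty\,\delta(x)\,r^{-(N+2-2s)}$; dividing by $h_1(x)\asymp\delta(x)^{-(2-2s)}$ via \eqref{h1-behav} leaves a term of order $\delta(x)^{3-2s}\,r^{-(N+2-2s)}\to0$ as $x\to z$, uniformly in $z$ once $r$ is fixed.

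Combining, for any $\eta>0$ there is $r>0$ (depending only on $\eta$ and the modulus of continuity of $\zeta$) and then $\rho>0$ such that $|x-z|<\rho$ implies $\bigl|\mathbb P^s_\Omega\zeta(x)/h_1(x)-\zeta(z)\bigr|\le\eta+C\eta$; since $\rho$ can be chosen uniformly in $z\in\partial\Omega$ (the far-part estimate only used $\delta(x)$ small and $r$ fixed), this is precisely the asserted uniform convergence.

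The only delicate point is making the ``far part'' estimate genuinely uniform in $z$: one must ensure that when $|x-z|$ is small and $|\theta-z|\ge r$, the distance $|x-\theta|$ is bounded below by a constant multiple of $r$ independently of $z$ — this is immediate from the triangle inequality once $|x-z|<r/2$. A minor bookkeeping issue is that the near-part integral is over $\{\theta\in\partial\Omega:|\theta-z|<r\}$ rather than all of $\partial\Omega$, so bounding it by $\eta\,h_1(x)$ uses positivity of $P^s_\Omega$ and monotonicity of the integral; no compactness or covering argument beyond uniform continuity of $\zeta$ on the compact set $\partial\Omega$ is needed. I do not expect any real obstacle here; the proposition is essentially the standard approximate-identity argument adapted to the singular weight $h_1$.
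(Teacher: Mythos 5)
Your argument is correct and follows essentially the same route as the paper: the paper rewrites the quantity in the identical form $\frac1{h_1(x)}\int_{\partial\Omega}P^s_\Omega(x,\theta)\,|\zeta(\theta)-\zeta(z)|\,d\sigma(\theta)$, applies \eqref{poissbound} and \eqref{h1-behav} to reduce it to a classical Poisson-type integral $\delta(x)\int_{\partial\Omega}\frac{|\zeta(\theta)-\zeta(z)|}{|x-\theta|^N}\,d\sigma(\theta)$, and then refers to \cite[Lemma~3.1.5]{a1} for the approximate-identity step that you carry out explicitly. Your near/far decomposition, and the small refinement of keeping $P^s_\Omega$ exact on the near part via $\mathbb P^s_\Omega\mathbbm 1=h_1$, supply precisely the missing computation and are handled correctly, including the uniformity in $z$.
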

\begin{proof}
Let us write
\begin{multline*}
\left|\frac{\mathbb P_\Omega^s\zeta(x)}{h_1(x)}-\zeta(z)\right|=
\left|\frac1{h_1(x)}\int_{\partial\Omega}P^s_\Omega(x,\theta)\,\zeta(\theta)\;d\sigma(\theta)-\frac{h_1(x)\,\zeta(z)}{h_1(x)}\right|\ \leq\\
\leq\ \frac1{h_1(x)}\int_{\partial\Omega}P^s_\Omega(x,\theta)|\zeta(\theta)-\zeta(z)|\;d\sigma(\theta)\leq
C\delta(x)^{3-2s}\int_{\partial\Omega}\frac{|\zeta(\theta)-\zeta(z)|}{|x-\theta|^{N+2-2s}}\;d\sigma(\theta)\ \leq \\
\leq\ C\delta(x)\int_{\partial\Omega}\frac{|\zeta(\theta)-\zeta(z)|}{|x-\theta|^N}\;d\sigma(\theta).
\end{multline*}
It suffices now to repeat the computations in 
\cite[Lemma 3.1.5]{a1} 
to show that the obtained quantity 
converges to $0$ as $x\to z$.
\end{proof}

With an approximation argument started from the last Proposition,
we can deal with a $\zeta\in L^1(\partial\Omega)$ datum.

\begin{theo}\label{bound-l1} For any $\zeta\in L^1(\partial\Omega)$
and any $\phi\in C^0(\super\Omega)$ it holds
\[
\frac1t\int_{\{\delta(x)\leq t\}}
\frac{\mathbb{P}_\Omega^s\zeta(x)}{h_1(x)}\,\phi(x)\;dx
\xrightarrow[t\downarrow 0]{} \int_{\partial\Omega}\phi(y)\,\zeta(y)\;d\sigma(y).
\]
\end{theo}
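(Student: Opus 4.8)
\textbf{Proof plan for Theorem \ref{bound-l1}.}
The plan is to establish the limit first for continuous data, where it follows from Proposition \ref{bound-cont}, and then to upgrade to $L^1(\partial\Omega)$ by a density argument controlled by a uniform weak-type estimate. First I would fix $\zeta\in C(\partial\Omega)$ and $\phi\in C^0(\super\Omega)$: by Proposition \ref{bound-cont}, $\mathbb P_\Omega^s\zeta(x)/h_1(x)\to\zeta(z)$ uniformly as $x\to z\in\partial\Omega$, so on the collar $\{\delta(x)\le t\}$ the integrand $\tfrac{\mathbb P_\Omega^s\zeta(x)}{h_1(x)}\phi(x)$ is uniformly close to $(\zeta\phi)$ evaluated at the nearest boundary point. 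Using the coarea formula (or the $C^{1,1}$ tubular-neighbourhood change of variables $x\mapsto(\pi(x),\delta(x))$, with Jacobian $1+O(\delta)$), one gets
\[
\frac1t\int_{\{\delta(x)\le t\}}\frac{\mathbb P_\Omega^s\zeta(x)}{h_1(x)}\,\phi(x)\;dx
=\frac1t\int_0^t\!\!\int_{\partial\Omega}\bigl(\zeta\phi\bigr)(\pi(x))\,(1+O(r))\,d\sigma\,dr+o(1)
\xrightarrow[t\downarrow0]{}\int_{\partial\Omega}\phi\,\zeta\,d\sigma,
\]
which proves the statement for continuous $\zeta$.

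Next I would prove a uniform bound: there is $C=C(\Omega,N,s)$ such that for all $t$ small and all $\eta\in L^1(\partial\Omega)$,
\[
\frac1t\int_{\{\delta(x)\le t\}}\frac{|\mathbb P_\Omega^s\eta(x)|}{h_1(x)}\;dx\ \le\ C\,\|\eta\|_{L^1(\partial\Omega)}.
\]
To see this, bound $|\mathbb P_\Omega^s\eta(x)|\le\mathbb P_\Omega^s|\eta|(x)$, use \eqref{poissbound} and \eqref{h1-behav} to get $\tfrac{|\mathbb P_\Omega^s\eta(x)|}{h_1(x)}\le C\delta(x)^{3-2s}\int_{\partial\Omega}\tfrac{|\eta(\theta)|}{|x-\theta|^{N+2-2s}}d\sigma(\theta)$, integrate in $x$ over the collar (again flattening the boundary via Lemma \ref{lemma39}), and apply Fubini; the inner integral $\int_{\{\delta(x)\le t\}}\tfrac{\delta(x)^{3-2s}}{|x-\theta|^{N+2-2s}}dx$ is $O(t)$ uniformly in $\theta\in\partial\Omega$ by the same scaling computation that appears in the proof of \eqref{h1-behav}. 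Dividing by $t$ gives the claimed uniform bound.

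Finally I would combine the two. Given $\zeta\in L^1(\partial\Omega)$ and $\eps>0$, pick $\zeta_\eps\in C(\partial\Omega)$ with $\|\zeta-\zeta_\eps\|_{L^1(\partial\Omega)}<\eps$. Writing $\zeta=\zeta_\eps+(\zeta-\zeta_\eps)$ and using linearity of $\mathbb P_\Omega^s$, the $\zeta_\eps$-part converges to $\int_{\partial\Omega}\phi\,\zeta_\eps\,d\sigma$ by the first step, while the remainder is controlled, uniformly in $t$, by $C\|\phi\|_{L^\infty}\|\zeta-\zeta_\eps\|_{L^1(\partial\Omega)}\le C\|\phi\|_{L^\infty}\eps$ by the second step; likewise $|\int_{\partial\Omega}\phi(\zeta-\zeta_\eps)d\sigma|\le\|\phi\|_{L^\infty}\eps$. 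Letting $t\downarrow0$ and then $\eps\downarrow0$ yields the result. The main obstacle is the uniform weak-type estimate of the second step: one must check that the collar integral of the Poisson-type kernel against $h_1^{-1}$ really is $O(t)$ with a constant independent of the boundary point $\theta$, which requires the sharp two-sided bounds \eqref{poissbound}, \eqref{h1-behav} together with a careful boundary-flattening argument as in the proof of \eqref{h1-behav}; the passage from continuous to $L^1$ data is then routine.
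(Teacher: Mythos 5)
Your proposal is correct and follows essentially the same route as the paper: the paper's three-term splitting with the uniform bound on $\Phi(t,y):=\tfrac1t\int_{\{\delta(x)<t\}}P_\Omega^s(x,y)\phi(x)/h_1(x)\,dx$ is exactly your uniform weak-type estimate (proved identically from \eqref{poissbound}, \eqref{h1-behav}, Fubini and boundary flattening), and the paper's treatment of the continuous-data term via Proposition \ref{bound-cont} is your step one. The only stylistic difference is that you make explicit the tubular-neighbourhood/coarea step showing that uniform convergence of the integrand implies convergence of the normalized collar integral, a point the paper leaves implicit.
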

\begin{proof}
For a general $\zeta\in L^1(\partial\Omega)$, consider a sequence
${\{\zeta_k\}}_{k\in\N}\subset C(\partial\Omega)$ such that 
\begin{equation}\label{topos}
\int_{\partial\Omega}\left|\zeta_k(y)-\zeta(y)\right|d\sigma(y)\xrightarrow[k\uparrow\infty]{} 0.
\end{equation}
For any fixed $k\in\N$, we have
\begin{align}
& \left| \frac1t\int_{\{\delta(x)<t\}}\frac{\mathbb P_\Omega^s\zeta(x)}{h_1(x)}\,\phi(x)\:dx 
- \int_{\partial\Omega}\phi(x)\,\zeta(x)\:d\sigma(x) \right| \leq \nonumber \\
& \left|\frac1t\int_{\{\delta(x)<t\}}\frac{\mathbb P_\Omega^s\zeta(x)-\mathbb P_\Omega^s\zeta_k(x)}{h_1(x)}\,\phi(x)\:dx \right| \label{1first}\\
& +\ \left| \frac1t\int_{\{\delta(x)<t\}}\frac{\mathbb P_\Omega^s\zeta_k(x)}{h_1(x)}\,\phi(x)\:dx 
- \int_{\partial\Omega}\phi(x)\,\zeta_k(x)\:d\sigma(x) \right| \label{2second} \\
& +\ \left|\int_{\partial\Omega}\phi(x)\,\zeta_k(x)\:d\sigma(x)-\int_{\partial\Omega}\phi(x)\,\zeta(x)\:d\sigma(x) \right|. \label{3third}
\end{align}
Call $\lambda_k:=\zeta_k-\zeta$: 
the term \eqref{1first} equals
\[
\frac1t\int_{\{\delta(x)<t\}}\frac{\mathbb P_\Omega^s\lambda_k(x)}{h_1(x)}\,\phi(x)\:d\sigma(x) =
\int_{\partial\Omega}\left(\frac1t\int_{\{\delta(x)<t\}}\frac{P_\Omega^s(x,y)}{h_1(x)}\,\phi(x)\:dx\right)\lambda_k(y)\:d\sigma(y)
\]
Call
\[
\Phi(t,y):=\frac1t\int_{\{\delta(x)<t\}}\frac{P_\Omega^s(x,y)}{h_1(x)}\,\phi(x)\:dx.
\]
Combining equations \eqref{poissbound}, \eqref{h1-behav} and the boundedness of $\phi$,
we can prove that $\Phi$ is uniformly bounded in $t$ and $y$. Indeed,
\[
|\Phi(t,y)|\leq\frac{\|\phi\|_{L^\infty(\Omega)}}{t}
\int_{\{\delta(x)<t\}}\frac{\delta(x)^{3-2s}}{|x-y|^{N+2-2s}}\:dx
\leq
\frac{\|\phi\|_{L^\infty(\Omega)}}{t}
\int_{\{\delta(x)<t\}}\frac{\delta(x)}{|x-y|^N}\:dx
\]
and reducing our attention to the flat case (see Lemma \ref{lemma39} in the Appendix for the complete justification)
we estimate (the ' superscript denotes an object living in $\R^{N-1}$)
\[
\frac1t\int_0^t\int_{B'} \frac{x_N}{\left[|x'|^2+x_N^2\right]^{N/2}}\:dx'\:dx_N
\leq \frac1t\int_0^t\int_{B'_{1/x_N}}\frac{d\xi}{\left[|\xi|^2+1\right]^{N/2}}\:dx_N
\leq \int_{\R^{N-1}}\frac{d\xi}{\left[|\xi|^2+1\right]^{N/2}}.
\]
Thus $\int_{\partial\Omega}\Phi(t,y)\lambda_k(y)d\sigma(y)$ 
is arbitrarily small in $k$ in view of \eqref{topos}.

The term \eqref{2second} converges to $0$ as $t\downarrow0$ because the convergence
\[
\frac{\mathbb P_\Omega^s\zeta_k(x)}{h_1(x)}\,\phi(x)\xrightarrow[x\to z]{x\in\Omega} \zeta_k(z)\,\phi(z)
\]
is uniform in $z\in\partial\Omega$ in view of Proposition \ref{bound-cont}.

Finally, the term \eqref{3third} is arbitrarily small with $k\uparrow+\infty$, because of \eqref{topos}.
This concludes the proof of the theorem, because 
\begin{multline*}
\lim_{t\downarrow 0}\left| \frac1t\int_{\{\delta(x)<t\}}\frac{\mathbb P_\Omega^s\zeta(x)}{h_1(x)}\,\phi(x)\:dx 
- \int_{\partial\Omega}\phi(x)\,\zeta(x)\:d\sigma(x) \right|
\ \leq \\
\leq\ \|\Phi\|_{L^\infty((0,t_0)\times\partial\Omega)}\int_{\partial\Omega}|\zeta_k(y)-\zeta(y)|\;d\sigma(y)
+\|\phi\|_{L^\infty(\partial\Omega)}\int_{\partial\Omega}|\zeta_k(y)-\zeta(y)|\;d\sigma(y)
\end{multline*}
and letting $k\uparrow+\infty$ we deduce the thesis as a consequence of \ref{topos}.
\end{proof}

Moreover we have also
\begin{theo}\label{bound-g} 
For any $\mu\in\mathcal{M}(\Omega)$, such that 
\begin{equation}\label{mu1}
\int_\Omega\delta\,d|\mu|<\infty,
\end{equation}
and any $\phi\in C^0(\super\Omega)$ it holds
\begin{equation}\label{333}
\frac1t\int_{\{\delta(x)\leq t\}}\frac{\mathbb{G}_\Omega^s\mu(x)}{h_1(x)}\,\phi(x)
\:dx
\ \xrightarrow[t\downarrow 0]{}\ 0.
\end{equation}
\end{theo}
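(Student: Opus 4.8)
The plan is to estimate the quantity in \eqref{333} in absolute value, reduce it by Tonelli to a single scalar inequality, and then invoke dominated convergence. Since $|\mathbb{G}_\Omega^s\mu(x)|\le\int_\Omega G_\Omega^s(x,y)\,d|\mu|(y)$, since $\phi$ is bounded, and since $1/h_1\le C\delta^{2-2s}$ by \eqref{h1-behav}, Tonelli's theorem (all integrands below being nonnegative) gives
\[
\left|\frac1t\int_{\{\delta(x)\le t\}}\frac{\mathbb{G}_\Omega^s\mu(x)}{h_1(x)}\,\phi(x)\;dx\right|\ \le\ C\,\|\phi\|_{L^\infty(\Omega)}\int_\Omega F(t,y)\,d|\mu|(y),\qquad F(t,y):=\frac1t\int_{\{\delta(x)\le t\}}\delta(x)^{2-2s}\,G_\Omega^s(x,y)\;dx.
\]
Hence it is enough to show $\int_\Omega F(t,y)\,d|\mu|(y)\to0$ as $t\downarrow0$, which I would obtain from dominated convergence against $d|\mu|$, the admissible dominating function being furnished by the hypothesis \eqref{mu1}.

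For this I need two facts about $F$. First, \emph{$F(t,y)\to0$ as $t\downarrow0$ for every fixed $y\in\Omega$}: if $t<\delta(y)/2$ then, since $\delta$ is $1$-Lipschitz, every $x$ with $\delta(x)\le t$ satisfies $|x-y|\ge\delta(y)-\delta(x)\ge\delta(y)/2$, so \eqref{green-behav} yields $G_\Omega^s(x,y)\le C\,\delta(x)\,\delta(y)^{2s-1-N}$ and therefore
\[
F(t,y)\ \le\ C\,\delta(y)^{2s-1-N}\,\frac1t\int_{\{\delta(x)\le t\}}\delta(x)^{3-2s}\,dx\ \le\ C_y\,t^{3-2s}\ \xrightarrow[t\downarrow0]{}\ 0,
\]
where we used $|\{\delta\le t\}|\le Ct$ for a $C^{1,1}$ domain and small $t$, and $3-2s>0$. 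Second, the \emph{uniform bound}
\[
F(t,y)\ \le\ C\,\delta(y)\qquad\text{for all }y\in\Omega\text{ and all }t\in(0,t_0),
\]
with $C=C(\Omega,N,s)$ and $t_0=t_0(\Omega)>0$. Granting this, $y\mapsto C\delta(y)$ dominates $F(t,\cdot)$ and belongs to $L^1(\Omega,d|\mu|)$ exactly by \eqref{mu1}; combined with the pointwise vanishing just proved, dominated convergence gives $\int_\Omega F(t,y)\,d|\mu|(y)\to0$, which is the theorem.

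The main obstacle is the uniform bound. For $y$ with $\delta(y)\ge\delta_0>0$ it follows at once from the previous display: $F(t,y)\le C_{\delta_0}t_0^{3-2s}\le(C_{\delta_0}t_0^{3-2s}/\delta_0)\,\delta(y)$. So one may assume $y$ is close to $\partial\Omega$ and flatten the boundary near its foot point by Lemma \ref{lemma39}, writing $y=(0,b)$ with $b=\delta(y)$ and $x=(x',x_N)$ with $\delta(x)=x_N=:a\in(0,t)$. Using \eqref{green-behav}, the equivalence $1\wedge\tfrac{\delta(x)\delta(y)}{|x-y|^2}\asymp\tfrac{\delta(x)\delta(y)}{\delta(x)\delta(y)+|x-y|^2}$ from the Remark after Lemma \ref{lemma8}, and $|x-y|^2=|x'|^2+(a-b)^2$, the inequality $F(t,y)\le C\delta(y)$ reduces --- after cancelling one power of $b$ --- to the claim that
\[
J(t,b)\ :=\ \frac1t\int_{B'_r}dx'\int_0^t da\;\frac{a^{3-2s}}{\bigl[ab+|x'|^2+(a-b)^2\bigr]\,\bigl[|x'|^2+(a-b)^2\bigr]^{(N-2s)/2}}
\]
is bounded by a constant depending only on $N$, $s$ and the chart, uniformly in $t\in(0,t_0)$ and $b\in(0,\eta)$. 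I would prove this by passing to polar coordinates in $x'$, using $ab+(a-b)^2=a^2-ab+b^2\asymp a^2+b^2$, and splitting the $x_N$-integral into the ranges $\{a<b/2\}$, $\{b/2\le a\le 2b\}$, $\{a>2b\}$, each intersected with $(0,t)$; on each piece the $x'$- and $a$-integrals decouple into elementary one-dimensional integrals, whose exponents are admissible because $0<s<1$ and $N\ge2$ (the degenerate case $N=1$ being treated directly, without flattening). The genuinely delicate points are the near-diagonal regime $a\approx b$, where $[|x'|^2+(a-b)^2]^{-(N-2s)/2}$ is singular (and produces a harmless logarithm in the borderline case $N-2s=1$), and the bookkeeping of the two competing scales: whether $t\lessgtr b$ decides which portion of the $a$-integral is active, but in every configuration one ends up with a bound free of both $t$ and $b$, which is what is needed.
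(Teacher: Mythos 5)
Your proposal is correct and the overall scaffold --- reduce via Tonelli to $\int_\Omega F(t,y)\,d|\mu|(y)$, show $F(t,\cdot)\to0$ pointwise, dominate uniformly by $C\delta$, and invoke dominated convergence --- is the same as the paper's. Where you part ways is in how the pointwise decay is obtained. The paper fixes $s'\in(0,s\wedge\frac12)$, writes $\delta(x)^{2-2s}\le t^{2-2s'}\delta(x)^{-2s+2s'}$ on $\{\delta\le t\}$, and cites \cite[Proposition 7]{dhifli} to get $\int_\Omega G_\Omega^s(\cdot,y)\delta^{-2s+2s'}\le C\delta(y)^{2s'}$; this yields $F(t,y)\le C\,t^{1-2s'}\delta(y)^{2s'}$ for $\delta(y)\ge t$, which both vanishes pointwise and is dominated by $\delta(y)$ on that set. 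Your observation that once $t<\delta(y)/2$ one has $|x-y|\ge\delta(y)/2$ on the whole boundary layer --- so $G_\Omega^s(x,y)\lesssim\delta(x)\delta(y)^{2s-1-N}$ and hence $F(t,y)\lesssim_y t^{3-2s}$ --- achieves the same end more elementarily and self-containedly, without the auxiliary exponent $s'$ or the external reference. The price is that the whole weight of the uniform bound $F(t,y)\le C\delta(y)$ now rests on your flattened-boundary computation for $J(t,b)$, and you only sketch that. In fact what you claim there is precisely the content of the paper's claim \eqref{cla} restated without the $s'$-refinement: the paper carries out the computation explicitly (only) in the regime $\delta(y)<t$ and covers $\delta(y)\ge t$ by the dhifli bound, while you would have to push the polar-coordinate/case-splitting argument through both regimes. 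Your sketch is sound --- the split at $a\lessgtr b$, the equivalence $ab+(a-b)^2\asymp a^2+b^2$, and the decoupling of the $\rho$- and $a$-integrals do produce a $t$- and $b$-free constant in every configuration, and the $N-2s=1$ borderline logarithm is indeed harmless --- but as written it is a plan, not a proof, so you should either execute it or, more economically, note that for $\delta(y)\ge t$ the domination $F(t,y)\le C\delta(y)$ can be extracted at once from your own pointwise estimate (it gives $F\lesssim\delta(y)^{2s-1-N}t^{3-2s}\lesssim\delta(y)$ provided $t\le\delta(y)$ and $N\ge 2$), leaving only the regime $\delta(y)<t$, which is exactly the computation the paper actually writes out.
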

\begin{proof}
By using the Jordan decomposition of $\mu=\mu^+-\mu^-$ into its positive and negative part, we can suppose without loss of generality that $\mu\geq 0$. 
Fix some $s'\in(0,s\wedge 1/2)$. Exchanging the order of integration we claim that
\begin{equation}\label{cla}
\int_{\{\delta(x)\leq t\}}G_\Omega^s(x,y)\,\delta(x)^{2-2s}dx\leq
\left\lbrace\begin{aligned}
& C\,t^{2-2s'}\,\delta(y)^{2s'} & \delta(y)\geq t \\
& C\,t\,\delta(y) & \delta(y)<t
\end{aligned}\right.
\end{equation}
where $C=C(N,\Omega,s)$ and does not depend on $t$, which yields
\[
\frac1t\int_\Omega\left(\int_{\{\delta(x)<t\}}G_\Omega^s(x,y)\,\frac{dx}{h_1(x)}\right)d\mu(y)
\ \leq\ C\,t^{1-2s'}\int_{\{\delta(y)\geq t\}}\delta(y)^{2s'}d\mu(y)
+C\int_{\{\delta(x)<t\}}\delta(y)\:d\mu(y).
\]
The second addend converges to $0$ as $t\downarrow0$ by \eqref{mu1}.
Since $t^{1-2s'}\delta(y)^{2s'}$ converges pointwisely to $0$ in $\Omega$ as $t\downarrow0$ and
$t^{1-2s'}\delta(y)^{2s'}\leq\delta(y)$ in $\{\delta(y)\geq t\}$, 
then the first addend converges to $0$ by dominated convergence.
This suffices to deduce our thesis \eqref{333}.

Let us turn now to the proof of the claimed estimate \eqref{cla}. 
For the first part we refer to \cite[Proposition 7]{dhifli} to say
\[
\int_{\{\delta(x)\leq t\}}G_\Omega^s(x,y)\,\delta(x)^{2-2s}dx\leq
t^{2-2s'}\int_{\{\delta(x)\leq t\}}G_\Omega^s(x,y)\,\delta(x)^{-2s+2s'}dx\leq C\,t^{2-2s'}\,\delta^{2s'}.
\]
We focus our attention on the case where $\partial\Omega$ is locally flat, 
i.e. we suppose that in a neighbourhood $A$ of $y$ it holds $A\cap\partial\Omega\subseteq\{x_N=0\}$
(see Lemma \ref{lemma39} in the Appendix to reduce the general case to this one).
So, since $\delta(x)=x_N$ and retrieving estimate \eqref{green-behav} on the Green function, 
we are dealing with
(the $'$ superscript denotes objects that live in $\R^{N-1}$)
\[
\int_0^t\int_{B'(y')}\frac{y_N\,x_N^{3-2s}}{|x'-y'|^2+(x_N-y_N)^2+x_N\,y_N}
\cdot\frac{dx'}{\left[|x'-y'|^2+(x_N-y_N)^2\right]^{(N-2s)/2}}\;dx_N.
\]
From now on we drop multiplicative constants depending only on $N$ and $s$.
Suppose without loss of generality $y'=0$. Set $x_N=y_N\eta$ and switch to polar coordinates in the $x'$ variable:
\[
\int_0^{t/y_N}\int_0^1\frac{y_N^{5-2s}\,\eta^{3-2s}}{r^2+y_N^2(\eta-1)^2+\eta\,y_N^2}\cdot\frac{r^{N-2}dr}{\left[r^2+y_N^2(\eta-1)^2\right]^{(N-2s)/2}}\;d\eta
\]
and then set $r=y_N\rho$ to get
\begin{multline*}
y_N^2\int_0^{t/y_N}\eta^{3-2s}\int_0^{1/y_N}\frac{\rho^{N-2}}{\left[\rho^2+(\eta-1)^2\right]^{(N-2s)/2}}
\cdot\frac{d\rho}{\rho^2+(\eta-1)^2+\eta}\;d\eta\ \leq\\
\leq\ y_N^2\int_0^{t/y_N}\eta^{3-2s}\int_0^{1/y_N}\frac{\rho}{\left[\rho^2+(\eta-1)^2\right]^{(3-2s)/2}}
\cdot\frac{d\rho}{\rho^2+(\eta-1)^2+\eta}\;d\eta.
\end{multline*}
Consider now $s\in(1/2,1)$. 
The integral in the $\rho$ variable is less than
\[
\int_0^{1/y_N}\frac{\rho}{\left[\rho^2+(\eta-1)^2\right]^{(5-2s)/2}}\;d\rho\leq|\eta-1|^{-3+2s}
\]
so that, integrating in the $\eta$ variable,
\[
y_N^2\int_0^{t/y_N}\eta^{3-2s}\,|\eta-1|^{-3+2s}\:d\eta
\leq t\,y_N
\]
and we prove \eqref{cla} in the case $s\in(1/2,1)$.
Now we study the case $s\in(0,1/2]$. Split the integration in the $\eta$ variable into $\int_0^2$ and $\int_2^{1/y_N}$: the latter can be treated in the same way as above. For the other one we exploit the inequality $\rho^2+(\eta-1)^2+\eta\geq \frac34$ to deduce\footnote{In the computation that follows, in the particular case $s=\frac12$ the term $|1-\eta|^{2s-1}$ must be replaced by $-\ln|1-\eta|$, but this is harmless.}:
\begin{multline*}
y_N^2\int_0^2\eta^{3-2s}\left(\int_0^{1/y_N}\frac{\rho}{\left[\rho^2+(\eta-1)^2+\eta\right]^{(3-2s)/2}}\cdot\frac{d\rho}{\rho^2+(\eta-1)^2}\right)d\eta\ \leq\\
\leq\ y_N^2\int_0^2\eta^{3-2s}\left(\int_0^{1/y_N}\frac{\rho}{\left[\rho^2+(\eta-1)^2\right]^{(3-2s)/2}}\:d\rho\right)d\eta\leq y_N^2\int_0^2\frac{\eta^{3-2s}}{|\eta-1|^{1-2s}}\:d\eta
\leq y_N^2.
\end{multline*}
Note now that, in our set of assumptions, $y_N=\delta(y)<t$. So $y_N^2\leq ty_N$ and we get to the desired conclusion \eqref{cla} also in the case $s\in(0,1/2]$.
\end{proof}

\section{The Dirichlet problem}\label{dir-sect}
Recall the definition of test functions \eqref{test}.

\begin{lem}\label{lemma-test} $\T(\Omega)\subseteq C^1_0(\overline{\Omega})\cap C^\infty(\Omega)$. 
Moreover, for any $\psi\in\T(\Omega)$ and $z\in\partial\Omega$,
\begin{equation}\label{dertest}
-\frac{\partial\psi}{\partial\nu}(z)\ =\ \int_\Omega P_\Omega^s(y,z)\,\Ds\psi(y)\;dy.
\end{equation}
\end{lem}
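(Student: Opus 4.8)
The plan is to unwind the definition $\psi = \rest^{-s}\phi$ for some $\phi\in C^\infty_c(\Omega)$ and use the representation of $\rest^{-s}$ via the Green function $G_\Omega^s$ from Lemma \ref{inverse}. First I would write, for $x\in\Omega$,
\[
\psi(x) = \rest^{-s}\phi(x) = \int_\Omega G_\Omega^s(x,y)\,\phi(y)\;dy.
\]
Since $\phi$ has compact support $K\subset\subset\Omega$, the kernel $G_\Omega^s(\cdot,y)$ is, for $y\in K$, smooth up to $\super\Omega$ away from the diagonal; in particular $x\mapsto G_\Omega^s(x,y)$ is $C^1$ on a neighbourhood of $\partial\Omega$ uniformly in $y\in K$, with $G_\Omega^s(x,y)\to 0$ as $x\to\partial\Omega$ and normal derivative equal to $-P_\Omega^s(x,y)$ when $x\in\partial\Omega$ — this is exactly the content of Lemma \ref{lemma8} and the continuity of $G_\Omega^s$ on $\super\Omega^2$ minus the diagonal established in its proof. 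Differentiating under the integral sign (justified by the uniform bounds \eqref{green-behav} and \eqref{poissbound} on $K$) gives $\psi\in C^1(\super\Omega)$ with $\psi|_{\partial\Omega}=0$, i.e. $\psi\in C^1_0(\super\Omega)$, and for $z\in\partial\Omega$,
\[
-\frac{\partial\psi}{\partial\nu}(z) = \int_\Omega \left(-\frac{\partial}{\partial\nu_x}G_\Omega^s(x,y)\Big|_{x=z}\right)\phi(y)\;dy = \int_\Omega P_\Omega^s(z,y)\,\phi(y)\;dy,
\]
using the symmetry $G_\Omega^s(x,y)=G_\Omega^s(y,x)$ (clear from \eqref{green} and symmetry of the heat kernel). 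The interior smoothness $\psi\in C^\infty(\Omega)$ follows from Lemma \ref{clas-harm} (or directly from fast decay of spectral coefficients), since $\Ds\psi=\phi\in C^\infty_c(\Omega)$.

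It remains to replace $\phi$ by $\Ds\psi$ in the formula. By construction $\Ds\psi = \Ds\rest^{-s}\phi = \phi$ in $L^2(\Omega)$, hence pointwise a.e., and in fact everywhere since both sides are continuous; so $\phi = \Ds\psi$ and the identity \eqref{dertest} reads
\[
-\frac{\partial\psi}{\partial\nu}(z) = \int_\Omega P_\Omega^s(z,y)\,\Ds\psi(y)\;dy = \int_\Omega P_\Omega^s(y,z)\,\Ds\psi(y)\;dy,
\]
where the last equality uses $P_\Omega^s(z,y)=-\partial_{\nu_y}G_\Omega^s(z,y)$ versus $P_\Omega^s(y,z)=-\partial_{\nu_z}G_\Omega^s(y,z)$ together with the symmetry of $G_\Omega^s$; one must be slightly careful because in \eqref{dertest} the second argument $z$ is the boundary point, matching the definition \eqref{poisson} of $P_\Omega^s$, so this is just a bookkeeping check rather than a new estimate.

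The main obstacle is the rigorous justification of differentiating $\int_\Omega G_\Omega^s(x,y)\phi(y)\,dy$ in $x$ up to the boundary and identifying the boundary normal derivative with $-\int_\Omega P_\Omega^s\phi$. This requires a dominated-convergence / uniform-continuity argument for $\partial_{\nu_x}G_\Omega^s(x,y)$ as $x\to\partial\Omega$, uniformly for $y$ in the compact support of $\phi$; the needed estimates are already encoded in the heat kernel bound \eqref{hkb} and its consequences \eqref{green-behav}, \eqref{poissbound}, essentially as in the proof of Lemma \ref{lemma8}, so no genuinely new ingredient is needed — only care with the order of the limit $x\to\partial\Omega$ and the integration in $y$ on the compact set $K$ where $G_\Omega^s(\cdot,y)$ stays bounded away from its singularity.
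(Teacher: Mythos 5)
Your proof is correct and essentially follows the paper's own argument. The one small divergence is the route to $\psi\in C^1_0(\overline\Omega)$: the paper gets this first from fast decay of the spectral coefficients of $f=\Ds\psi$ (via \eqref{spectral-coef} and \eqref{esti-ef}), and only then invokes the Green representation plus the estimate \eqref{prepois} and dominated convergence to prove \eqref{dertest}; you instead derive both $C^1$-regularity up to the boundary and the normal-derivative formula in one pass from the Green representation, differentiating under the integral sign. Both arguments lean on exactly the same bound \eqref{prepois} (equivalently, on the proof of Lemma \ref{lemma8}), so they are really the same mechanism packaged slightly differently. Your bookkeeping on $P_\Omega^s(y,z)$ versus $-\partial_{\nu_x}G_\Omega^s(x,y)\vert_{x=z}$, using the symmetry of $G_\Omega^s$ (inherited from symmetry of the heat kernel), is carried out correctly; and both you and the paper obtain interior smoothness from Lemma \ref{clas-harm}.
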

\begin{proof}
Take $\psi\in\T(\Omega)$ and let $f=\Ds\psi$. Since $f\in C^\infty_c(\Omega)$, the spectral coefficients of $f$ have fast decay (see \eqref{spectral-coef}) and so the same holds true for $\psi$. It follows that $\psi\in C^1_0(\overline\Omega)$ and $\T(\Omega)\subseteq C^1_0(\overline{\Omega})$.
By Lemma \ref{inverse}, for all $x\in\overline\Omega$,
\[
\psi(x) = \int_\Omega G_\Omega^s(x,y)\,f(y)\:dy
\]
Using Lemma \ref{lemma8}, \eqref{prepois} and the dominated convergence theorem, \eqref{dertest} follows.

Since $\Ds$ is self-adjoint in $H(2s)$, we know that the equality $\Ds\psi=f$ holds in $\mathcal{D}'(\Omega)$ and the interior regularity follows from Lemma \ref{clas-harm}.
\end{proof}

\begin{lem}[Maximum principle for classical solutions]\label{maxprinc-clas}
Let $u\in C^{2s+\eps}_{loc}(\Omega)\cap L^1(\Omega,\delta(x)\,dx)$ such that
\[
\Ds u \geq 0\ \hbox{ in }\Omega,\qquad \liminf_{x\to\partial\Omega}u(x)\geq 0.
\]
Then $u\geq 0$ in $\Omega$. In particular this holds when $u\in\T(\Omega)$.
\end{lem}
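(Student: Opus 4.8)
The plan is to prove the maximum principle by testing the equation against a carefully chosen family of test functions in $\T(\Omega)$ that are (approximately) nonnegative, and then pass to the limit. The key observation is that, for $\psi\in\T(\Omega)$, we have the representation $\psi = \mathbb{G}^s_\Omega f$ with $f = \Ds\psi \in C^\infty_c(\Omega)$; so if we can choose $f \geq 0$, $f\not\equiv 0$, then by positivity of the Green function $G^s_\Omega$ (which is manifest from \eqref{green}, since the heat kernel is positive) we get $\psi \geq 0$, and in fact by Lemma \ref{As-smooth} we have the one-sided bound $\Ds\psi \leq -C\delta$ on $\Omega\setminus\mathrm{supp}\,\psi$.

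First I would set up the weak pairing: since $u\in C^{2s+\eps}_{loc}(\Omega)\cap L^1(\Omega,\delta\,dx)$ and $\Ds u \geq 0$ pointwise, and $\Ds u$ is well-defined by \eqref{As2}, one needs the integration-by-parts identity
\[
\int_\Omega u\,\Ds\psi = \int_\Omega (\Ds u)\,\psi - \int_{\partial\Omega}\frac{\partial\psi}{\partial\nu}\,d\zeta_u,
\]
where $\zeta_u = \lim (u/h_1)$ on $\partial\Omega$ is the boundary trace; this is essentially part (3) of Lemma \ref{clas-weak} (classical solutions are weak solutions), applied here with the caveat that $u$ need not have a continuous trace. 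To avoid this technical issue, the cleaner route is: fix $\psi\in\T(\Omega)$ with $\Ds\psi = f \leq 0$, $f\not\equiv 0$, so that $\psi \geq 0$ and $-\partial\psi/\partial\nu \geq 0$ on $\partial\Omega$ (the latter by Hopf-type positivity, or directly since $\psi\geq 0$, $\psi=0$ on $\partial\Omega$, $\psi\in C^1_0(\super\Omega)$). Then I would exhaust $\Omega$ by $\Omega_n \uparrow \Omega$ with smooth boundary, integrate $\int_{\Omega_n} u\,\Ds\psi$, and use that on $\Omega_n$ the operator $\Ds$ acts classically via \eqref{As2}; Green's-type manipulations plus the sign conditions $\Ds u \geq 0$, $f\leq 0$, $\liminf_{\partial\Omega} u \geq 0$ would force $\int_\Omega u\,f \leq 0$, hence (as $f\leq 0$ arbitrary) $u\geq 0$ a.e., and then everywhere by the interior continuity of $u$.

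Alternatively, and perhaps most economically, one localizes: it suffices to show $u(x_0)\geq 0$ for each fixed $x_0\in\Omega$. Approximating the Dirac mass at $x_0$ from inside by $f_n \geq 0$ smooth and compactly supported, set $\psi_n = \mathbb{G}^s_\Omega f_n \in \T(\Omega)$, so $\psi_n \geq 0$ and $\psi_n \to G^s_\Omega(\cdot,x_0)$ in a suitable sense. Testing and using the two sign facts — $\Ds u \geq 0$ inside and the nonnegativity of the boundary term $-\int_{\partial\Omega}(\partial\psi_n/\partial\nu)\,d\zeta_u$, which holds because $-\partial\psi_n/\partial\nu\geq 0$ and $\zeta_u = \liminf u/h_1 \geq 0$ by hypothesis combined with $h_1>0$ — yields $\int_\Omega u\,f_n \geq 0$, i.e.\ $u(x_0)\geq 0$ in the limit.

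The main obstacle I anticipate is the justification of the boundary term: one needs to know that $u$ actually possesses a nonnegative weighted boundary trace (at least in the $\liminf$ sense) and that this trace pairs correctly against $-\partial\psi/\partial\nu$ via formula \eqref{dertest} of Lemma \ref{lemma-test}. Since $u$ is only assumed $L^1(\Omega,\delta\,dx)$ with $\Ds u\geq 0$, controlling its behaviour near $\partial\Omega$ requires an argument: one can bound $u$ below near the boundary by subtracting a small multiple of $h_1$ (using $\liminf u/h_1 \geq 0$) and an interior correction solving $\Ds v = \Ds u$ with zero trace, reducing to the case where $u\geq -\eps h_1 + (\text{nonnegative})$, then let $\eps\downarrow 0$. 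Handling the interplay of the $\delta^{-(2-2s)}$ blow-up rate of $h_1$ with the $C^1_0(\super\Omega)$ decay of test functions — so that $\int_\Omega u\,\Ds\psi$ converges absolutely and the limiting identity is legitimate — is the delicate point, but it is controlled by estimates \eqref{poissbound}, \eqref{h1-behav} and Lemma \ref{As-smooth} (specifically $|\Ds\psi|\leq C\delta$).
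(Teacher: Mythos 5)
Your proposal takes a duality/weak-formulation route, but the paper's proof is a one-line pointwise argument that you should not pass up: if $u$ attained a negative minimum at some interior point $x^*$ (such a minimum exists because $\liminf_{x\to\partial\Omega}u\ge 0$ and $u\in C^{2s+\eps}_{loc}(\Omega)$, so the sublevel set $\{u\le u(x_0)/2\}$ is compact for any $x_0$ with $u(x_0)<0$), then \eqref{As2} gives
\[
\Ds u(x^*)=\int_\Omega[u(x^*)-u(y)]\,J(x^*,y)\,dy+\kappa(x^*)u(x^*),
\]
where the integral term is $\le 0$ since $u(x^*)\le u(y)$ and $J>0$, while $\kappa(x^*)u(x^*)<0$ since $\kappa>0$; this yields $\Ds u(x^*)<0$, contradicting the hypothesis. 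No test functions, no boundary traces, no Green's identity.

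Your approach also has a genuine gap, not just extra length. The core of your argument is to feed $u$ into the integration-by-parts identity $\int_\Omega u\,\Ds\psi=\int_\Omega\psi\,d\mu-\int_{\partial\Omega}\frac{\partial\psi}{\partial\nu}\,d\zeta$ for $\psi\in\T(\Omega)$. But that identity for classical solutions is precisely part (3) of Lemma \ref{clas-weak}, whose proof in the paper invokes Lemma \ref{maxprinc-clas} — the very statement you are proving — so the argument is circular unless you re-derive the identity from scratch. The sketch of doing that (exhausting $\Omega$ by $\Omega_n\uparrow\Omega$ and performing ``Green's-type manipulations'') is not straightforward here: $\Ds$ is a genuinely nonlocal, domain-dependent operator, and there is no clean way to localize it to $\Omega_n$ and produce boundary terms on $\partial\Omega_n$. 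Two further issues: in your second paragraph you take $\Ds\psi=f\le 0$ and claim $\psi\ge 0$, but positivity of $G^s_\Omega$ gives $\psi=\mathbb G^s_\Omega f\le 0$ when $f\le 0$ (the sign is reversed from what you wrote); and the hypothesis $\liminf_{x\to\partial\Omega}u(x)\ge 0$ does not provide a well-defined weighted trace $\zeta_u$ of $u/h_1$ to pair against $-\partial\psi/\partial\nu$, so the boundary term in your pairing is not actually legitimate without a separate and nontrivial argument.
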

\begin{proof} Suppose $x^*\in\Omega$ such that 
$\displaystyle u(x^*)=\min_\Omega u<0$. Then
\[
\Ds u(x^*)=\int_\Omega [u(x^*)-u(y)]\,J(x,y)\;dy + \kappa(x^*) u(x^*) < 0,
\]
a contradiction.
\end{proof}

\begin{lem}[Maximum principle for weak solutions] \label{maxprinc-weak}
Let $\mu\in\mathcal{M}(\Omega),\zeta\in\mathcal{M}(\partial\Omega)$ be two Radon measures satisfying \eqref{hypo}
with $\mu\geq 0$ and $\zeta\geq 0$.
Consider $u\in L^1_{loc}(\Omega)$ 
a weak solution to the Dirichlet problem \eqref{prob}. Then $u\geq 0$ a.e. in $\Omega$. 
\end{lem}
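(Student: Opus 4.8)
The plan is to argue by duality, using as test function $\psi := \Dms\phi\in\T(\Omega)$ for an arbitrary nonnegative $\phi\in C^\infty_c(\Omega)$, and to read off the sign of each term in the weak formulation \eqref{byparts} from the positivity of the Green function and of the Poisson kernel.

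First, fix $\phi\in C^\infty_c(\Omega)$ with $\phi\ge 0$ and set $\psi := \Dms\phi$. By the definition \eqref{test} of the test space, $\psi\in\T(\Omega)$ and $\Ds\psi = \phi$. By Lemma \ref{lemma-test}, $\psi\in C^1_0(\super\Omega)\cap C^\infty(\Omega)$; in particular $|\psi|\le C\delta$ on $\Omega$ and $\partial\psi/\partial\nu$ is well-defined and continuous on $\partial\Omega$. Moreover, by Lemma \ref{inverse}, $\psi(x) = \int_\Omega G^s_\Omega(x,y)\,\phi(y)\,dy$ for a.e. $x$, hence everywhere by continuity; since $G^s_\Omega\ge 0$ (it is built from the positive heat kernel in \eqref{green}) and $\phi\ge 0$, we get $\psi\ge 0$ in $\Omega$. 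Similarly, formula \eqref{dertest} gives, for every $z\in\partial\Omega$,
\[
-\frac{\partial\psi}{\partial\nu}(z)\ =\ \int_\Omega P^s_\Omega(y,z)\,\Ds\psi(y)\;dy\ =\ \int_\Omega P^s_\Omega(y,z)\,\phi(y)\;dy\ \ge\ 0,
\]
because $P^s_\Omega\ge 0$ by \eqref{poissbound}.

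Next I plug $\psi$ into the weak formulation \eqref{byparts}. Since $\Ds\psi = \phi\in C^\infty_c(\Omega)$ and $u\in L^1_{loc}(\Omega)$, the left-hand side equals $\int_\Omega u\,\phi$; the right-hand side is finite because $\int_\Omega\delta\,d|\mu|<\infty$ together with $|\psi|\le C\delta$, while $\zeta$ is a finite measure and $\partial\psi/\partial\nu\in C(\partial\Omega)$. Therefore
\[
\int_\Omega u\,\phi\ =\ \int_\Omega \psi\,d\mu\ -\ \int_{\partial\Omega}\frac{\partial\psi}{\partial\nu}\,d\zeta\ \ge\ 0,
\]
since $\mu\ge 0$, $\psi\ge 0$, $\zeta\ge 0$ and $-\partial\psi/\partial\nu\ge 0$. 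As $\phi\ge 0$ in $C^\infty_c(\Omega)$ was arbitrary, this forces $u\ge 0$ a.e. in $\Omega$.

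The argument is essentially immediate once the positivity of the kernels $G^s_\Omega$ and $P^s_\Omega$ and the representation \eqref{dertest} of the normal derivative of a test function are available, so there is no substantial obstacle. The only points that need a word of care are the well-definedness and finiteness of the integrals in \eqref{byparts} for this particular $\psi$ (handled by $\psi\in C^1_0(\super\Omega)$ and the hypothesis \eqref{hypo}) and the passage from ``$\int_\Omega u\,\phi\ge 0$ for all nonnegative $\phi\in C^\infty_c(\Omega)$'' to ``$u\ge 0$ a.e.'', which is standard. Note that this maximum principle is exactly what is needed to obtain uniqueness in Theorem \ref{point}, so we deliberately do not invoke the representation formula \eqref{repr} here.
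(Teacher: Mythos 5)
Your proof is correct and follows essentially the same duality argument as the paper: take $\psi=\Dms\phi$ for $\phi\in C^\infty_c(\Omega)$ nonnegative, show $\psi\ge 0$ and $-\partial\psi/\partial\nu\ge 0$, and read off $\int_\Omega u\phi\ge 0$ from the weak formulation \eqref{byparts}. The only cosmetic difference is that you derive $\psi\ge 0$ from the positivity of the Green function via Lemma \ref{inverse}, whereas the paper invokes Lemma \ref{maxprinc-clas} (the maximum principle for classical solutions); both are equally valid.
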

\begin{proof}
Take $f\in C^\infty_c(\Omega)$, $f\ge0$ and $\psi=\Dms f\in\T(\Omega)$. By Lemma \ref{maxprinc-clas}, $\psi\geq 0$ in $\Omega$ and by Lemma \ref{lemma-test} $-\frac{\partial\psi}{\partial\nu}\geq 0$ on $\partial\Omega$. Thus, by \eqref{byparts}, $\int_\Omega uf\geq 0$. Since this is true for every $f\in C^\infty_c(\Omega)$, the result follows.
\end{proof}

\subsection{Proof of Theorem \ref{point}}
Uniqueness is a direct consequence of the comparison principle, Lemma \ref{maxprinc-weak}.
Let us prove that formula \eqref{repr} defines the desired weak solution. Observe that if $u$ is given by \eqref{repr}, then $u\in L^1(\Omega,\delta(x)dx)$. Indeed, 

\begin{multline}\label{L1delta}
\int_\Omega\left|\varphi_1(x) \int_\Omega G^s_\Omega(x,y)d\mu(y)\right|dx
\leq\int_\Omega \int_\Omega G_\Omega^s(x,y)\varphi_1(x)dx\;d|\mu|(y)
\\=\frac1{\lambda_1^s}\int_\Omega\varphi_1(y)\;d|\mu|(y)\le C \Vert \delta\mu\Vert_{\mathcal M(\Omega)}
\end{multline}
This, along with Lemma \ref{harm}, proves that $u\in L^1(\Omega,\delta(x)dx)$ and \eqref{cont}.
Now, pick $\psi\in\T(\Omega)$ and compute, via the Fubini's Theorem, Lemma \ref{inverse} and Lemma \ref{lemma-test},
\begin{multline*}
\int_\Omega u(x)\Ds\psi(x)\;dx = \\
=\ \int_\Omega\left(\int_\Omega G_\Omega^s(x,y)d\mu(y)\right)
\Ds\psi(x)\;dx 
+\int_\Omega\left(\int_{\partial\Omega}P_\Omega^s(x,z)\:d\zeta(z)\right)\Ds\psi(x)\;dx\ = \\
=\ \int_\Omega\left(\int_\Omega G_\Omega^s(x,y)\Ds\psi(x)\:dx\right)d\mu(y) + \int_{\partial\Omega}\left(\int_\Omega P^s_\Omega(x,z)\Ds\psi(x)\;dx\right)d\zeta(z)\ =\\
=\ \int_\Omega\psi(y)\;d\mu(y)-\int_{\partial\Omega}\frac{\partial\psi}{\partial\nu}(z)\;d\zeta(z).
\end{multline*}
\hfill$\square$

\subsection{Proof of Lemma \ref{clas-weak}}

\noindent {\it Proof of 1.}
Consider a sequence $\{\eta_k\}_{k\in\N}\subset C^\infty_c(\Omega)$ of bump functions such that
$0\leq\eta_1\leq\ldots\leq\eta_k\leq\eta_{k+1}\leq\ldots\leq 1$ and 
$\eta_k(x)\uparrow\chi_\Omega(x)$ as $k\uparrow\infty$. 
Consider $\psi\in C^\infty_c(\Omega)$ and define
$f_k:=\eta_k\Ds\psi\in C^\infty_c(\Omega)$, $\psi_k:=\rest^{-s}f_k\in\T(\Omega)$.

Let us first note that the integral 
\[
\int_\Omega u\,\Ds\psi
\]
makes sense in view of \eqref{ineq-lemma1} and \eqref{cont}. The sequence ${\{f_k\}}_{k\in\N}$ trivially converges a.e. to $\Ds\psi$, while
\[
|\psi_k(x)-\psi(x)|\leq\int_\Omega G_\Omega^s(x,y)|\Ds\psi(y)|
\cdot(1-\eta_k(y))\;dy
\]
converges to 0 for any $x\in\Omega$ by dominated convergence.
Since $u$ is a weak solution, it holds
\[
\int_\Omega u\,f_k\ =\ \int_\Omega\psi_k\;d\mu-\int_{\partial\Omega}\frac{\partial\psi_k}{\partial\nu}\;d\zeta.
\]
By dominated convergence $\int_\Omega u\,f_k\to \int_\Omega u\Ds\psi$ and $\int_\Omega\psi_k\:d\mu\to\int_\Omega\psi\:d\mu$.
Indeed for any $k\in\N$,
\[
|f_k|\leq |\Ds\psi|\qquad\hbox{and}\qquad
|\psi_k|\leq\frac1{\lambda_1^s}\left\|\frac{\Ds\psi}{\varphi_1}\right\|_{L^\infty(\Omega)}\varphi_1,
\]
where the latter inequality follows from the maximum principle or the representation formula $\psi_k(x)=\int_\Omega G_\Omega^s(x,y)f_k(y)dy$.
Finally, the convergence 
\[
\int_{\partial\Omega}\frac{\partial\psi_k}{\partial\nu}\;d\zeta\xrightarrow[k\uparrow\infty]{}0
\]
holds by dominated convergence, since
\[
\int_{\partial\Omega}\frac{\partial\psi_k}{\partial\nu}\;d\zeta
=\int_\Omega\mathbb P_\Omega^s\zeta\,f_k
\]
by the Fubini's Theorem, and $\mathbb P_\Omega^s\zeta\in L^1(\Omega,\delta(x)dx)$ while $|f_k|\leq |\Ds\psi|\leq C\delta$
by Lemma \eqref{As-smooth}, so that
\[
\int_\Omega\mathbb P_\Omega^s\zeta\,f_k
\xrightarrow[k\uparrow\infty]{}
\int_\Omega\mathbb P_\Omega^s\zeta\,f\ =\ 0
\]
because $\mathbb P_\Omega^s\zeta$ is $s$-harmonic and $f\in\ C^\infty_c(\Omega)$.

The proof of the boundary trace can be found in Theorems \ref{bound-l1} and \ref{bound-g}, by recalling the representation formula provided by Theorem \ref{point} for the solution to \eqref{prob}.
\hfill$\square$

\noindent {\it Proof of 2.}
Recall that $u$ is represented by
\[
u(x)=\int_\Omega G_\Omega^s(x,y)\,\mu(y)\:dy
+\int_{\partial\Omega}P_\Omega^s(x,y)\,\zeta(y)\:d\sigma(y).
\]
By Point 1. and Lemma \ref{clas-harm}, $u\in C^{2s+\alpha}_{loc}(\Omega)$.
Moreover, $u\in L^1(\Omega,\delta(x)dx)$ thanks to \eqref{cont}. So,
we can pointwisely compute $\Ds u$ by using \eqref{As2} and \eqref{def2}:
this entails by the self-adjointness of the operator in \eqref{def2} that
\[
\int_\Omega \Ds u\,\psi = \int_\Omega u\Ds\psi = \int_\Omega \mu\psi,
\qquad\text{for any }\psi\in C^\infty_c(\Omega)
\]
and we must conclude that $\Ds u=\mu$ a.e.
By continuity the equality holds everywhere.

We turn now to the boundary trace. The contribution given by $\mathbb G_\Omega^s\mu$
is irrelevant, because it is a bounded function as it follows from 
\[
|\mathbb G_\Omega^s\mu(x)|\leq C\|\mu\|_{L^\infty(\Omega)}\int_\Omega\frac{dy}{\left|x-y\right|^{N-2s}}
\]
where we have used \eqref{green-behav}.
Therefore, by Propositions \ref{bound-cont}, there also holds for all $x_0\in\partial\Omega$,
\[
\lim_{x\to x_0, x\in\Omega} \frac{u(x)}{h_1(x)}
= \lim_{x\to x_0, x\in\Omega} \frac{\mathbb{G}_\Omega^s\mu(x)+\mathbb{P}_\Omega^s\zeta(x)}{h_1(x)}
= \zeta(x_0).
\]
\hfill$\square$

\noindent {\it Proof of 3.} By Lemma \ref{u-to-lapl} $\mu\in C^\eps_{loc}(\Omega)$. In addition, we have assumed that $\zeta\in C(\partial\Omega)$. Consider 
\[
v(x)=\int_\Omega G_\Omega^s(x,y)\,\mu(y)\;dy+\int_{\partial\Omega}P_\Omega^s(x,z)\,\zeta(z)\;d\sigma(z)
\]
the weak solution associated to data $\mu$ and $\zeta$. 
By the previous point of the Lemma, $v$ is a classical solution to the equation, 
so that in a pointwise sense it holds
\[
\Ds(u-v)=0\hbox{ in }\Omega,\qquad \frac{u-v}{h_1}=0\hbox{ on }\partial\Omega.
\]
By applying Lemma \ref{maxprinc-clas} we conclude that $|u-v|\leq \eps h_1$ for any $\eps>0$ and thus $u-v\equiv 0$. 
\hfill$\square$

\section{The nonlinear problem}\label{nonlin-sect}

\begin{lem}[Kato's Inequality]\label{kato} For $f\in L^1(\Omega,\delta(x)dx)$ let $w\in L^1(\Omega,\delta(x)dx)$
weakly solve 
\[
\left\lbrace\begin{aligned}
\Ds w &= f & & \hbox{in }\Omega \\
\frac{w}{h_1} &= 0 & & \hbox{on }\partial\Omega.
\end{aligned}\right.
\]
For any convex $\Phi:\R\to\R$, $\Phi\in C^2(\R)$ such that $\Phi(0)=0$
and $\Phi(w)\in L^1_{loc}(\Omega)$, it holds
\[
\Ds\Phi(w)\leq\Phi'(w)\Ds w.
\]
Moreover, the same holds for $\Phi(t)=t^+=t\wedge 0$.
\end{lem}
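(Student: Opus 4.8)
The plan is to follow the classical strategy of Brezis--Ponce adapted to the nonlocal setting, exploiting the pointwise formula \eqref{As2} together with the fact that $w$ has good interior regularity. First I would note that since $f\in L^1(\Omega,\delta(x)dx)$ and $w$ is a weak solution, Lemma \ref{clas-weak}(1) gives $w\in L^1(\Omega,\delta(x)dx)$ with $\Ds w = f$ in $\mathcal D'(\Omega)$; moreover, on any compact $\omega\subset\subset\Omega$, the interior estimates \eqref{calpha}--\eqref{ctwosplusalpha} (or Lemma \ref{clas-harm} applied after a smoothing of $f$) show that $w$ is at least continuous, indeed $C^{2s+\alpha}_{loc}$ wherever $f$ is H\"older, so that $\Ds\Phi(w)$ and $\Phi'(w)\Ds w$ can be given pointwise meaning through \eqref{As2} at least after a regularization. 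The cleanest route is to mollify: replace $f$ by $f_k\in C^\infty_c(\Omega)$ with $f_k\to f$ in $L^1(\Omega,\delta(x)dx)$, let $w_k = \rest^{-s}f_k$, prove the inequality for the smooth $w_k$, and pass to the limit at the end using the continuity of $\rest^{-s}$ (Theorem \ref{point}, estimate \eqref{cont}) and Fatou-type arguments.

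For the smooth case, the key computation is purely pointwise. Using \eqref{As2}, for $x\in\Omega$,
\[
\Ds\Phi(w)(x) = PV\!\int_\Omega \bigl[\Phi(w(x))-\Phi(w(y))\bigr]J(x,y)\,dy + \kappa(x)\Phi(w(x)).
\]
Since $\Phi$ is convex with $\Phi(0)=0$, one has the two elementary inequalities $\Phi(w(x))-\Phi(w(y)) \le \Phi'(w(x))\,(w(x)-w(y))$ for all $y$, and $\Phi(w(x)) \le \Phi'(w(x))\,w(x)$ (the latter from convexity and $\Phi(0)=0$). Because $J\ge 0$ and $\kappa\ge 0$, multiplying by $J(x,y)$, integrating, and adding $\kappa(x)$ times the second inequality yields exactly
\[
\Ds\Phi(w)(x) \le \Phi'(w(x))\left(PV\!\int_\Omega[w(x)-w(y)]J(x,y)\,dy + \kappa(x)w(x)\right) = \Phi'(w(x))\,\Ds w(x).
\]
One has to be slightly careful with the principal value: near $y=x$ the bracket $\Phi(w(x))-\Phi(w(y))$ is $O(|x-y|)$ (by $C^1$ regularity of $w$ and $\Phi$) while $J(x,y)\lesssim |x-y|^{-(N+2s)}$ by \eqref{Jbound}, so the singular part is handled exactly as for $w$ itself and the PV makes sense; the convexity bound is applied to the truncated integrals over $\{|x-y|>\varepsilon\}$ and survives the limit $\varepsilon\to 0$ since both sides converge.

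To pass from $w_k$ to $w$: by \eqref{cont} we have $w_k\to w$ in $L^1(\Omega,\delta(x)dx)$, hence (up to a subsequence) a.e., and by interior regularity $w_k\to w$ in $C^0_{loc}(\Omega)$, so $\Phi(w_k)\to\Phi(w)$ and $\Phi'(w_k)\to\Phi'(w)$ locally uniformly. Testing the inequality $\Ds\Phi(w_k)\le \Phi'(w_k)\Ds w_k = \Phi'(w_k)f_k$ against a nonnegative $\psi\in\T(\Omega)$, i.e. writing $\int_\Omega \Phi(w_k)\Ds\psi \le -\int_{\partial\Omega}\frac{\partial\psi}{\partial\nu}\cdot 0 + \int_\Omega \Phi'(w_k)f_k\,\psi$ after integrating by parts (using that $\Phi(w_k)$ vanishes on $\partial\Omega$ in the appropriate weighted sense because $w_k$ does and $\Phi(0)=0$), one lets $k\to\infty$; the left side converges by dominated convergence using $\Phi(w)\in L^1_{loc}$ and $|\Ds\psi|\le C\delta$ from Lemma \ref{As-smooth}, and the right side converges because $\Phi'(w_k)\to\Phi'(w)$ locally uniformly, $\psi\in C^1_0(\overline\Omega)$, and $f_k\to f$ in $L^1(\Omega,\delta(x)dx)$ (noting $\psi\le C\delta$). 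This gives $\Ds\Phi(w)\le\Phi'(w)\Ds w$ in the weak/distributional sense. Finally, for $\Phi(t)=t^+$, which is convex, Lipschitz, but only piecewise $C^2$, I would approximate it from below by a sequence of smooth convex functions $\Phi_n$ with $\Phi_n(0)=0$, $\Phi_n\uparrow t^+$ and $\Phi_n'\to \mathbbm{1}_{\{t>0\}}$ pointwise, apply the already-proven inequality to each $\Phi_n$, and pass to the limit in the weak formulation; monotone/dominated convergence handles the left-hand side and the pointwise convergence $\Phi_n'(w)\to \mathbbm{1}_{\{w>0\}}$ together with $|\Phi_n'|\le 1$ handles the right-hand side, yielding $\Ds(w^+)\le \mathbbm{1}_{\{w>0\}}\,\Ds w$.

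\medskip
\noindent\textbf{Main obstacle.} The delicate point is not the convexity inequality itself — that is a one-line pointwise estimate — but making rigorous the manipulation of the principal-value integral and, above all, controlling the \emph{boundary behaviour} of $\Phi(w)$: one must ensure that $\Phi(w)$ still satisfies the zero boundary condition $\Phi(w)/h_1\to 0$ (so that no boundary term appears when integrating by parts against $\psi\in\T(\Omega)$), which uses $\Phi(0)=0$ together with the fact that $w/h_1\to 0$, i.e. $w=o(\delta^{-(2-2s)})$, hence $w$ is bounded near most of the boundary only in a weighted sense — so some care with the Lipschitz bound $|\Phi(w)|\le \|\Phi'\|_{L^\infty(\text{range }w)}|w|$ or a truncation argument is needed. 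The passage to the non-smooth $\Phi(t)=t^+$ via smooth approximation is routine once the smooth case and these boundary issues are settled.
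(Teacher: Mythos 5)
Your pointwise argument for smooth data is correct and amounts to the same computation as the paper's (they write the exact Taylor remainder $-\int[w(x)-w(y)]^2 J(x,y)\int_0^1\Phi''(\cdot)(1-t)\,dt\,dy\le 0$ rather than invoking the convexity inequality $\Phi(a)-\Phi(b)\le\Phi'(a)(a-b)$, but these are equivalent). The regularization-and-limit scaffold is also the paper's. However, the passage to the limit as written has a genuine gap for general $f\in L^1(\Omega,\delta(x)dx)$ and general convex $\Phi$, and the paper's intermediate steps are precisely what close it.

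Concretely: the claim ``by interior regularity $w_k\to w$ in $C^0_{loc}(\Omega)$'' does not follow. Lemma~\ref{clas-harm} gives $\|w_k-w\|_{C^\alpha(K)}\le C(\|f_k-f\|_{L^\infty(K')}+\|w_k-w\|_{L^1(\Omega,\delta dx)})$; the second term vanishes by~\eqref{cont}, but the first does not for a mere $L^1(\Omega,\delta dx)$ datum $f$, which need not be locally bounded. Without locally uniform convergence of $w_k$ you lose control of $\Phi(w_k)$ and $\Phi'(w_k)$, and the invocation of dominated convergence is unsupported: $\Phi(w)\in L^1_{loc}$ does not dominate $\Phi(w_k)$, and $\Phi'$ may be unbounded (e.g.\ $\Phi(t)=t^2$), so $\int_\Omega\Phi'(w_k)f_k\psi$ need not converge either. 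The paper avoids this by a three-stage argument: first $f\in C^\alpha_{loc}$ (pointwise), then $f\in L^\infty$ with $f_j\in C^\infty_c(\Omega)$ bounded in $L^\infty$ --- so $w_j$, $\Phi(w_j)$, $\Phi'(w_j)$ are bounded in $L^\infty$ and dominated convergence applies --- and finally general $f\in L^1(\Omega,\delta dx)$ via the two-sided truncation $f_{j,k}=(f\wedge j)\vee(-k)$ together with the splitting $\Phi=\Phi_1-\Phi_2$ into two monotone convex pieces, so that monotone convergence (first in $j$, then in $k$) can be applied separately to each of the four terms $\int\Phi_i(w_{j,k})[\Ds\psi]^\pm$ and to the right-hand side. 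This truncation-plus-monotonicity step is the real crux you are missing.

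One further remark: the ``main obstacle'' you flag --- controlling the boundary behaviour of $\Phi(w)$ so that no boundary term survives --- is actually a non-issue. The conclusion is the weak inequality $\int_\Omega\Phi(w)\,\Ds\psi\le\int_\Omega f\,\Phi'(w)\,\psi$ for $\psi\in\T(\Omega)$, $\psi\ge 0$, and $\Ds\psi\in C^\infty_c(\Omega)$ has \emph{compact} support, so the left-hand side is an integral over a compact subset of $\Omega$ and sees nothing of $\partial\Omega$; the hypothesis $\Phi(w)\in L^1_{loc}$ is exactly what makes it finite. No boundary condition on $\Phi(w)$ is asserted or needed. The approximation of $t^+$ by smooth convex $\Phi_n$ is fine and matches the paper's (which uses the explicit $\Phi_j(t)=\tfrac12\sqrt{t^2+j^{-2}}+\tfrac t2-\tfrac1{2j}$).
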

\begin{proof}
Let us first assume that $f\in C^\alpha_{loc}(\Omega)$.
In this case, by Lemma \ref{clas-harm}, $w\in C^{2s+\alpha}_{loc}(\Omega)$
and the equality $\Ds w=f$ holds in a pointwise sense. Then
\begin{align*}
\Ds\Phi\circ w(x) & =\ \int_\Omega[\Phi(w(x))-\Phi(w(y))]\,J(x,y)\:dy+\kappa(x)\,\Phi(w(x)) \\
& =\ \Phi'(w(x))\int_\Omega[w(x)-w(y)]\,J(x,y)\:dy+\kappa(x)\,\Phi(w(x)) \\
& \ \ -\int_\Omega[w(x)-w(y)]^2\,J(x,y)
\int_0^1\Phi''(w(x)+t[w(y)-w(x)])(1-t)\:dt\:dy \\
&\leq\ \Phi'(w(x))\,\Ds w(x)
\end{align*}
where we have used that $\Phi''\geq 0$ in $\R$ and 
that $\Phi'(t)\leq t\Phi(t)$, which follows from $\Phi(0)=0$.

We deal now with $f\in L^\infty(\Omega)$.
Pick ${\{f_j\}}_{j\in\N}\subseteq C^\infty_c(\Omega)$ converging to $f$
in $L^1(\Omega,\delta(x),dx)$ and bounded in $L^\infty(\Omega)$. 
The corresponding ${\{w_j=\mathbb{G}_\Omega^s f_j\}}_{j\in\N}$
converges to $w$ in $L^1(\Omega,\delta(x)dx)$,
is bounded in $L^\infty(\Omega)$ and without loss of generality
we assume that $f_j\to f$ and $w_j\to w$ a.e. in $\Omega$.
We know that for any $\psi\in \T(\Omega),\ \psi\geq 0$
\[
\int_\Omega\Phi(w_j)\,\Ds\psi\leq\int_\Omega f_j\,\Phi'(w_j)\psi.
\]
By the continuity of $\Phi$ and $\Phi'$ we have $\Phi(w_j)\to\Phi(w)$, $\Phi'(w_j)\to\Phi'(w)$ a.e. in $\Omega$ and that ${\{\Phi(w_j)\}}_{j\in\N}$, ${\{\Phi'(w_j)\}}_{j\in\N}$ are bounded in $L^\infty(\Omega)$. Since ${\{f_j\}}_{j\in\N}$ is converging to $f$ in $L^1(\Omega,\delta(x)dx)$, then 
\[
\int_\Omega\Phi(w_j)\,\Ds\psi\longrightarrow
\int_\Omega\Phi(w)\,\Ds\psi
\qquad\hbox{and}\qquad
\int_\Omega f_j\,\Phi'(w_j)\psi\longrightarrow
\int_\Omega f\,\Phi'(w)\psi
\]
by dominated convergence.

For a general $f\in L^1(\Omega,\delta(x)dx)$ define $f_{j,k}:=(f\wedge j)\vee(-k)$, $j,k\in\N$.
Also, split the expression of $\Phi=\Phi_1-\Phi_2$ into the difference of two increasing function: this can be done in
the following way. The function $\Phi'$ is continuous and increasing in $\R$,
so that it can either have constant sign or there exists $t_0\in\R$ such that $\Phi'(t_0)=0$.
If it has constant sign than $\Phi$ can be increasing or decreasing and 
we can choose respectively $\Phi_1=\Phi,\Phi_2=0$ or $\Phi_1=0,\Phi_2=-\Phi$.
Otherwise we can take
\[
\Phi_1(t)=\left\lbrace\begin{aligned}
& \Phi(t) & t>t_0 \\
& \Phi(t_0) & t\leq t_0
\end{aligned}\right.
\qquad\hbox{and}\qquad
\Phi_2(t)=\left\lbrace\begin{aligned}
& 0 & t>t_0 \\
& \Phi(t_0)-\Phi(t) & t\leq t_0
\end{aligned}\right. .
\]
We already know that
for any $\psi\in \T(\Omega),\ \psi\geq 0$
\[
\int_\Omega\Phi(w_{j,k})\,\Ds\psi\leq\int_\Omega f_{j,k}\,\Phi'(w_{j,k})\psi.
\]
On the right-hand side we can use twice the monotone convergence, 
letting $j\uparrow\infty$ first and then $k\uparrow\infty$.
On the left hand side, by writing $\Phi=\Phi_1-\Phi_2$
again we can exploit several times the monotone convergence by splitting
\begin{multline*}
\int_\Omega\Phi(w_{j,k})\,\Ds\psi = \int_\Omega\Phi_1(w_{j,k})\,[\Ds\psi]^+-\int_\Omega\Phi_1(w_{j,k})\,[\Ds\psi]^-\ +\\
-\ \int_\Omega\Phi_2(w_{j,k})\,[\Ds\psi]^++\int_\Omega\Phi_2(w_{j,k})\,[\Ds\psi]^-
\end{multline*}
to deduce the thesis.

Finally, note that $\Phi(t)=t^+$ can be monotonically
approximated by 
\[
\Phi_j(t)=\frac12\sqrt{t^2+\frac1{j^2}}+\frac t2 -\frac1{2j}
\]
which is convex, $C^2$ and $\Phi_j(0)=0$. So 
\[
\int_\Omega\Phi_j(w)\,\Ds\psi\leq\int_\Omega f\,\Phi_j'(w)\psi.
\]
Since $\Phi_j(t)\uparrow t^+$ and $2\Phi_j'(t)\uparrow 1+\hbox{sgn}(t)=2\chi_{(0,+\infty)}(t)$,
we prove the last statement of the Lemma.
\end{proof}

\begin{theo}\label{exist} 
Let $f(x,t):\Omega\times\R\longrightarrow\R$ be a Carath\'eodory function. %, 
%such that $\partial_tf(x,t):\Omega\times\R\longrightarrow(-\infty,0]$ is continuous as well. Problem
Assume that there exists a subsolution and a supersolution $\underline u,\overline u\in L^1(\Omega,\delta(x)dx)\cap L^\infty_{loc}(\Omega)$ 
%such that $\underline u\leq\overline u$ and $f(x,\underline u(x)),f(x,\overline u(x))\in L^1(\Omega,\delta(x)dx)$ 
to
\begin{equation}\label{098}
\left\lbrace\begin{aligned}
\Ds u &= f(x,u) & \hbox{ in }\Omega \\
\frac{u}{h_1} &= 0 & \hbox{ on }\partial\Omega
\end{aligned}\right.
\end{equation}
Assume in addition that $f(\cdot,v)\in L^1(\Omega,\delta(x)dx)$ for every $v\in L^1(\Omega,\delta(x)dx)$ such that $\underline u\le v\le \overline u$ a.e.
Then, there exist weak solutions $u_1,u_2\in L^1(\Omega,\delta(x)dx)$ in $[\underline u,\overline u]$ such that any solution in the interval $[\underline u,\overline u]$ satisfies
\[
\underline u\le u_1\le u\le u_2\le \overline u \qquad\text{a.e.}
\]
Moreover, if the nonlinearity $f$ is decreasing in the second variable, then the solution is unique.
\end{theo}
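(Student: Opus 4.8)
The plan is to solve \eqref{098} by monotone iteration between the sub- and supersolution, using the linear theory of Theorem \ref{point} to carry out each step, the weak maximum principle (Lemma \ref{maxprinc-weak}) to keep the iterates ordered, and Kato's inequality (Lemma \ref{kato}) for the comparison arguments; the general (non-monotone) case is then reached through a truncated nonlinearity and a fixed point theorem. For $v\in L^1(\Omega,\delta(x)dx)$ with $\underline u\le v\le\overline u$ a.e., the assumption $f(\cdot,v)\in L^1(\Omega,\delta(x)dx)$ and Theorem \ref{point} (with $\zeta=0$) produce a unique weak solution $Tv:=\mathbb G^s_\Omega f(\cdot,v)$ of $\Ds(Tv)=f(\cdot,v)$, $Tv/h_1=0$, and a fixed point of $T$ in $[\underline u,\overline u]$ is exactly a weak solution of \eqref{098} there. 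If moreover $t\mapsto f(x,t)$ is nondecreasing, then $T$ is order preserving on the interval and one has the integrable squeeze $|f(\cdot,v)|\le|f(\cdot,\underline u)|+|f(\cdot,\overline u)|$.

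Assume first $t\mapsto f(x,t)$ nondecreasing, and define $\underline u_0:=\underline u$, $\underline u_{n+1}:=T\underline u_n$. Since $\underline u$ is a subsolution, $\Ds\underline u_1=f(\cdot,\underline u)\ge\Ds\underline u$ and $\underline u_1/h_1=0\ge\underline u/h_1$, so $\underline u_1\ge\underline u_0$ by Lemma \ref{maxprinc-weak}; inductively, using monotonicity of $f$ and the supersolution inequality $\Ds\overline u\ge f(\cdot,\overline u)\ge f(\cdot,\underline u_n)=\Ds\underline u_{n+1}$, one obtains $\underline u_0\le\underline u_1\le\cdots\le\underline u_n\le\overline u$. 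Hence $\underline u_n\uparrow u_1$ pointwise with $\underline u\le u_1\le\overline u$, so $u_1\in L^1(\Omega,\delta(x)dx)$; since $f(\cdot,\underline u_{n-1})\to f(\cdot,u_1)$ a.e. under the integrable dominating function $|f(\cdot,\underline u)|+|f(\cdot,\overline u)|$, dominated convergence and the $L^1(\Omega,\delta(x)dx)$-continuity of $\mathbb G^s_\Omega$ (via \eqref{cont}) let me pass to the limit in $\underline u_n=\mathbb G^s_\Omega f(\cdot,\underline u_{n-1})$, so that $u_1$ is a weak solution. Starting instead at $\overline u$ one gets, the same way, a decreasing sequence $\overline u_n\downarrow u_2$ with $u_2$ a weak solution. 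For minimality and maximality: if $u$ is any weak solution in $[\underline u,\overline u]$, then $\underline u_0\le u$, hence $\Ds\underline u_1=f(\cdot,\underline u_0)\le f(\cdot,u)=\Ds u$, so $\underline u_1\le u$ by Lemma \ref{maxprinc-weak}; inductively $\underline u_n\le u$, whence $u_1\le u$, and symmetrically $u\le u_2$.

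Suppose now $f(x,\cdot)$ is nonincreasing and let $u,v$ be two weak solutions in $[\underline u,\overline u]$. Then $w:=u-v$ weakly solves $\Ds w=f(\cdot,u)-f(\cdot,v)$ with $w/h_1=0$, and Kato's inequality (Lemma \ref{kato}, case $\Phi(t)=t^+$) gives $\Ds w^+\le\chi_{\{u>v\}}\bigl(f(\cdot,u)-f(\cdot,v)\bigr)\le0$, since $f$ is nonincreasing and $u>v$ on the relevant set; the weak maximum principle then forces $w^+\equiv0$, i.e. $u\le v$, and by symmetry $u=v$.

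Finally, for a general Carath\'eodory $f$ I would replace it by the truncation $\tilde f(x,t):=f\bigl(x,\mathrm{med}(\underline u(x),t,\overline u(x))\bigr)$, so that $\tilde f(\cdot,v)\in L^1(\Omega,\delta(x)dx)$ for \emph{every} $v\in L^1(\Omega,\delta(x)dx)$, solve $\Ds u=\tilde f(\cdot,u)$, $u/h_1=0$ by a Schauder fixed point argument for $v\mapsto\mathbb G^s_\Omega\tilde f(\cdot,v)$ — compactness following from the interior estimates of Lemma \ref{clas-harm} and the $L^p(\Omega,\delta(x)dx)$ bound of Proposition \ref{Lp-reg} — and then check $\underline u\le u\le\overline u$ by applying Kato to $(\underline u-u)^+$ and $(u-\overline u)^+$, which gives $\mathrm{med}(\underline u,u,\overline u)=u$ and hence a solution of the original problem; the extremal solutions are recovered as before, now using that the set of weak solutions in $[\underline u,\overline u]$ is closed under $\wedge$ and $\vee$, again by Kato's inequality. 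I expect the main obstacle to be precisely this passage to the limit in the nonlinear term: in the monotone case it is free because of the squeeze $f(\cdot,\underline u)\le f(\cdot,v_n)\le f(\cdot,\overline u)$, whereas for general Carath\'eodory $f$ one must first establish equi-integrability of $\{\tilde f(\cdot,v_n)\}$ in $L^1(\Omega,\delta(x)dx)$ along the approximating sequence before Vitali's theorem applies.
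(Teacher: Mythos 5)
Your plan for the general Carath\'eodory case coincides essentially with the paper's proof: truncate $f$ to $F(x,t)=f(x,(t\wedge\overline u(x))\vee\underline u(x))$, apply Schauder to $v\mapsto\mathbb G^s_\Omega F(\cdot,v)$ with compactness coming from the interior $C^\alpha$ estimate in Lemma \ref{clas-harm} together with the $L^p(\Omega,\delta(x)dx)$ bound \eqref{lp}, and then locate the fixed point in $[\underline u,\overline u]$, build extremal solutions, and get uniqueness for decreasing $f$ all via Kato's inequality (Lemma \ref{kato}) and the weak maximum principle. The step you honestly flag as the main obstacle --- continuity of the truncated superposition map $v\mapsto F(\cdot,v)$ on $L^1(\Omega,\delta(x)dx)$, needed for the Schauder argument --- is exactly what the paper delegates to Montenegro and Ponce \cite{ponce}, and your preliminary monotone-iteration case is a correct but redundant warm-up already subsumed by the Schauder route.
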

\begin{proof}
According to Montenegro and Ponce \cite{ponce}, the mapping $v\mapsto F(\cdot,v)$, where
\[
F(x,t):=f(x,[t\wedge\overline u(x)]\vee\underline u(x)),\qquad x\in\Omega,t\in\R,
\]
acts continuously from $L^1(\Omega,\delta(x)dx)$ into itself.
In addition, the operator 
\begin{eqnarray*}
\mathcal K:\ L^1(\Omega,\delta(x)dx) & \longrightarrow &  L^1(\Omega,\delta(x)dx) \\
 v(x)\qquad & \longmapsto & \mathcal K(v)(x) = \int_\Omega G_\Omega^s(x,y) F(y,v(y))dy
\end{eqnarray*}
is compact. Indeed, take a bounded sequence ${\{v_n\}}_{n\in\N}$ in $L^1(\Omega,\delta(x)dx)$. On a compact set  $K\subset\subset\Omega$, $\underline u, \overline u$ are essentially bounded and so must be the sequence ${\{F(\cdot,v_n)\}}_{n\in\N}$. By Theorem \ref{point} and Lemma \ref{clas-harm}, ${\{\mathcal K(v_n)\}}_{n\in\N}$ is bounded in $C^\alpha_{loc}(K)\cap L^p(\Omega,\delta(x)dx)$, $p\in[1,(N+1)/(N+1-2s))$. In particular, a subsequence ${\{v_{n_k}\}}_{k\in\N}$ converges locally uniformly to some $v$. By H\"older's inequality, we also have 
\[
\Vert v_{n_k}-v \Vert_{L^1(\Omega\setminus K,\delta(x)dx)} \le
\Vert v_{n_k}-v \Vert_{L^p(\Omega\setminus K,\delta(x)dx)} \Vert \mathds 1_{\Omega\setminus K}\Vert_{L^{p'}(\Omega\setminus K,\delta(x)dx)}.
\]
Hence,
\[
\Vert v_{n_k}-v \Vert_{L^1(\Omega,\delta(x)dx)}\le \Vert v_{n_k}-v \Vert_{L^\infty(K,\delta(x)dx)}\Vert\delta\Vert_{L^1(\Omega)}+C\Vert \mathds 1_{\Omega\setminus K}\Vert_{L^{p'}(\Omega\setminus K,\delta(x)dx}.
\]
Letting $k\to+\infty$ and then $K\to\Omega$, we deduce that $\mathcal K$ is compact and by the Schauder's Fixed Point Theorem, $\mathcal K$ has a fixed point $u\in L^1(\Omega,\delta(x)dx)$. We then may prove that $\underline u\leq u\leq \overline u$ by means of the Kato's Inequality (Lemma \ref{kato}) as it is done in \cite{ponce},
which yields that $u$ is a solution of \eqref{098}.

The proof of the existence of the minimal and a maximal solution $u_1,u_2\in L^1(\Omega,\delta(x)dx)$ can be performed in an analogous way as in \cite{ponce}, as the only needed tool is the Kato's Inequality.

As for the uniqueness, suppose $f$ is decreasing in the second variable and consider two solutions $u,v\in L^1(\Omega,\delta(x)dx)$ to \eqref{098}. By the Kato's Inequality Lemma \ref{kato}, we have
\[
\Ds(u-v)^+\leq \chi_{\{u>v\}}[f(x,u)-f(x,v)]\leq 0
\qquad\hbox{in }\Omega
\]
which implies $(u-v)^+\leq 0$ by the Maximum Principle Lemma \ref{maxprinc-weak}. Reversing the roles of $u$ and $v$, we get
also $(v-u)^+\leq 0$, thus $u\equiv v$ in $\Omega$.
\end{proof}

\subsection{Proof of Theorem \ref{nonhom-cor}}

Problem \eqref{nonhom-prob} is equivalent to
\begin{equation}
\left\lbrace\begin{aligned}
\Ds v &= g(x,\mathbb P_\Omega^s\zeta-v) & \hbox{ in }\Omega \\
\frac{v}{h_1} &= 0 & \hbox{ on }\partial\Omega
\end{aligned}\right.
\end{equation}
that possesses $\overline u=\mathbb P_\Omega^s\zeta$ 
as a supersolution and $\underline u=0$ as a subsolution.
Indeed, by equation \eqref{h1-behav} we have
\[
0\leq\mathbb{P}_\Omega^s\zeta\leq \|\zeta\|_{L^\infty(\Omega)}h_1\leq C\|\zeta\|_{L^\infty(\Omega)}\delta^{-(2-2s)}
\]
Thus any $v\in L^1(\Omega,\delta(x)dx)$ such that $0\leq v\leq\mathbb{P}_\Omega^s\zeta$ satisfies
\[
g(x,v)\leq h(v)\leq h(c\delta^{-(2-2s)})\in L^1(\Omega,\delta(x)dx).
\]
So, all hypotheses of Theorem \ref{exist} are satisfied and the result follows.
\hfill$\square$

\section{Large solutions}\label{extra-sect}

Consider the sequence ${\{u_j\}}_{j\in\N}$ built by solving
\begin{equation}\label{approx}
\left\lbrace\begin{aligned}
\Ds u_j &=\; -u_j^p & \hbox{ in }\Omega \\
\frac{u_j}{h_1} &=\; j & \hbox{ on }\partial\Omega.
\end{aligned}\right.
\end{equation}
Theorem \ref{nonhom-cor} guarantees the existence of such a sequence if
$\delta^{-(2-2s)p}\in L^1(\Omega,\delta(x)dx)$, i.e. $p<1/(1-s)$.
We claim that the sequence is increasing in $\Omega$: indeed the solution
to problem \eqref{approx} is a subsolution for the same problem with boundary datum $j+1$.
In view of this, the sequence $\{u_j\}_{j\in\N}$ admits a pointwise limit, possibly infinite.

% \begin{lem} If $u_j\uparrow u\in L^\infty_{loc}(\Omega)$ as $j\uparrow\infty$, then $u\in L^p(\Omega,\delta(x)dx)$, $p<1/(1-s)$. \end{lem}
%\begin{proof} By point {\it 1.} in Lemma \ref{clas-weak}, it holds for any nonnegative $\psi\in C^\infty_c(\Omega)$
%\[
%\int_\Omega u_j\Ds\psi\ =\ -\int_\Omega u_j^p\psi.
%\]
%Split $\Omega=\Omega^+\cup\Omega^-:=\{\Ds\psi\geq 0\}\cup\{\Ds\psi<0\}$.
%Letting $j\uparrow\infty$, by monotone convergence, we entail
%\[
%\int_{\Omega^+} u\Ds\psi+\int_{\Omega^-} u\Ds\psi\ =\ -\int_\Omega u^p\psi.
%\]
%The left-hand side is finite since $u\in L^\infty_{loc}(\Omega)$ and $\psi\in C^\infty_c(\Omega)$, while $\Ds\psi\leq -c\delta$ holds in a neighbourhood of the boundary, as Lemma \ref{As-smooth} asserts.
%So, we must conclude $u\in L^p(\Omega,\delta(x)dx)$. Recall that the condition on $p$ ensures the existence of ${\{u_j\}}_{j\in\N}$.
%\end{proof}

\subsection{Construction of a supersolution}

\begin{lem} There exist $\delta_0,C>0$ such that 
\[
\Ds\delta^{-\alpha}\ \geq\ -C\,\delta^{-\alpha p},
\qquad\hbox{for }\delta<\delta_0\hbox{ and }\alpha=\frac{2s}{p-1}.
\]
\end{lem}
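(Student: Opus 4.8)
The plan is to use the pointwise representation \eqref{As2} of the spectral fractional Laplacian, localizing near the boundary where $\delta$ is smooth. Since $\Omega$ is $C^{1,1}$, there is $\delta_1>0$ such that $\delta\in C^{1,1}(\{\delta<\delta_1\})$, and we extend $\delta^{-\alpha}$ to a function $v$ on all of $\Omega$ that is smooth and bounded on $\{\delta\geq\delta_1/2\}$, keeping $v=\delta^{-\alpha}$ on $\{\delta<\delta_1/2\}$ and $v\geq c>0$ everywhere. We want to estimate $\Ds v(x)$ from below for $x$ with $\delta(x)<\delta_0$ (with $\delta_0<\delta_1/4$ to be fixed), and show it dominates $-C\delta(x)^{-\alpha p}$. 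First I would record that $\alpha p=\alpha+2s$, so the claimed bound reads $\Ds\delta^{-\alpha}\geq -C\delta^{-\alpha-2s}$; this is the scaling-invariant form and is what one expects, since each application of a $2s$-order operator lowers the boundary exponent by $2s$.

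The main step is the analysis of the singular integral $PV\int_\Omega[v(x)-v(y)]J(x,y)\,dy$ together with the killing term $\kappa(x)v(x)$. I would split the domain of integration into the near region $B_{\delta(x)/2}(x)$, an intermediate region $\{|x-y|<\delta_1/2\}\setminus B_{\delta(x)/2}(x)$, and the far region $\{|x-y|\geq\delta_1/2\}$. On the near region one performs a second-order Taylor expansion of $v$ and uses the bound $J(x,y)\leq C|x-y|^{-N-2s}\cdot\frac{\delta(x)\delta(y)}{|x-y|^2}$ from \eqref{Jbound} valid when $\delta(x)\delta(y)\leq|x-y|^2$ (note $\delta(y)\sim\delta(x)$ there), together with $|D^2v|\leq C\delta^{-\alpha-2}$ and $|\grad v|\leq C\delta^{-\alpha-1}$; the resulting contribution is $O(\delta(x)^{-\alpha-2s})$. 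On the far region $v$ is bounded and $J(x,y)\leq C|x-y|^{-N-2s}$ is integrable there, so the contribution is $O(1)$, which is negligible compared to $\delta(x)^{-\alpha-2s}$. The killing term is controlled by $\kappa(x)\leq C\delta(x)^{-2s}$ (this bound follows by integrating \eqref{hkb} in \eqref{jkappa}, exactly as $J$ is estimated), whence $\kappa(x)v(x)\leq C\delta(x)^{-\alpha-2s}$, again of the right order.

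The delicate region is the intermediate one, where $\delta(y)$ ranges down to $0$ and the kernel $J(x,y)$ is comparable to $|x-y|^{-N-2s}\min\{1,\delta(x)\delta(y)|x-y|^{-2}\}$; this is where the competition between the positive part $v(x)-v(y)$ (arising when $\delta(y)>\delta(x)$, i.e. moving into the domain) and the negative part (when $\delta(y)<\delta(x)$, approaching the boundary) must be handled carefully, because naive absolute-value bounds diverge. The strategy here is to flatten the boundary via Lemma \ref{lemma39} and compare with the explicit half-space computation: for the model function $x_N^{-\alpha}$ on $\{x_N>0\}\subset\R^N$ one has an exact scaling $\Ds(x_N^{-\alpha})(x)=c(\alpha,s)\,x_N^{-\alpha-2s}$ with a \emph{finite} constant $c(\alpha,s)$ precisely for $0<\alpha<2s$ — and here $\alpha=2s/(p-1)<2s$ exactly when $p>2$... wait, we only know $p>1+s$, so $\alpha<2s$ requires $p-1>1$; since actually the range is $p\in(1+s,1/(1-s))$ one checks $\alpha=2s/(p-1)$ can exceed $2s$, so instead one uses that $\alpha<2$ (equivalent to $p>1+s$) which is exactly what guarantees $x_N^{-\alpha}\in L^1_{loc}$ against the kernel and makes the principal-value integral converge; the $C^{1,1}$ correction to flatness produces lower-order errors absorbed into the constant by taking $\delta_0$ small. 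The main obstacle is thus the sign-sensitive estimate of the intermediate integral: one cannot afford absolute values, and must exploit the cancellation in $[v(x)-v(y)]$ across the level set $\{\delta(y)=\delta(x)\}$, together with the convexity-type inequality $(t^{-\alpha})''>0$ ensuring the second-order remainder has a favorable sign on the part of the region where $v$ is being compared along the normal direction. Once the half-space model inequality is in hand with the correct (finite, possibly negative) constant, transferring it to $\Omega$ is routine: the difference $\Ds(\delta^{-\alpha})-\Ds(\text{flattened model})$ near a boundary point is $O(\delta^{-\alpha-2s+\eta})$ for some $\eta>0$ coming from the $C^{1,1}$ regularity, hence absorbed for $\delta<\delta_0$.
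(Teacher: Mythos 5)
Your proposal gets the exponent arithmetic right ($\alpha p = \alpha + 2s$) and correctly identifies $\alpha < 2$ (equivalently $p > 1+s$) rather than $\alpha<2s$ as the relevant constraint, but it misses the structural observation that removes the cancellation problem you struggle with, and the parts you do sketch contain gaps. The paper's proof first discards the killing term for free: since $\kappa\geq 0$, the term $\kappa(x)\delta(x)^{-\alpha}\geq 0$ is simply dropped for a lower bound (estimating $\kappa$ from above, as you do, goes the wrong way and is unneeded). It then partitions $\Omega$ by the value of $\delta(y)$, not by $|x-y|$:
\[
\Omega_1=\{\delta(y)\geq\tfrac32\delta(x)\},\qquad
\Omega_2=\{\tfrac12\delta(x)<\delta(y)<\tfrac32\delta(x)\},\qquad
\Omega_3=\{\delta(y)\leq\tfrac12\delta(x)\}.
\]
On $\Omega_1$ the integrand $[\delta(x)^{-\alpha}-\delta(y)^{-\alpha}]J(x,y)$ is nonnegative and is discarded outright, so there is no sign-sensitive analysis at all there. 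On $\Omega_3$ one has $|x-y|\geq\delta(x)/2$, and the extra factor $\delta(y)$ in the bound $J(x,y)\leq C\delta(x)\delta(y)|x-y|^{-N-2-2s}$ makes $\int_{\Omega_3}\delta(y)^{1-\alpha}|x-y|^{-N-2-2s}\,dy$ converge precisely when $\alpha<2$, yielding $O(\delta(x)^{-\alpha-2s})$. Only the thin annulus $\Omega_2$, where $\delta(y)\sim\delta(x)$, requires an actual principal-value computation with $J\leq C|x-y|^{-N-2s}$; this is essentially your near-region Taylor estimate, and the paper cites the analogous step of \cite{a2}.

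Your split by $|x-y|$ instead creates an intermediate region containing points both with $\delta(y)>\delta(x)$ and with $\delta(y)<\delta(x)$, so both signs of $\delta(x)^{-\alpha}-\delta(y)^{-\alpha}$ coexist; you correctly flag that absolute values fail there, but your proposed fix — flatten and invoke an exact half-space identity $\Ds(x_N^{-\alpha})=c\,x_N^{-\alpha-2s}$ — is neither proved nor readily available, since $\Ds$ is defined relative to the bounded domain $\Omega$ and a uniform comparison with a half-space model across the whole intermediate annulus is a nontrivial claim you do not supply. That is the genuine gap: the $\delta(y)$-based split eliminates the mixed-sign region entirely, and you have no substitute for it. A further concrete error: on the near region $B_{\delta(x)/2}(x)$ one has $\delta(x)\delta(y)>|x-y|^2$, so the minimum in \eqref{Jbound} saturates at $1$ and the sharp branch is $J\leq C|x-y|^{-N-2s}$; plugging instead $J\leq C\delta(x)\delta(y)|x-y|^{-N-2-2s}\sim C\delta(x)^2|x-y|^{-N-2-2s}$, as you propose, together with the quadratic Taylor remainder $\delta(x)^{-\alpha-2}|x-y|^2$ produces the divergent integral $\delta(x)^{-\alpha}\int_{B_{\delta(x)/2}}|x-y|^{-N-2s}\,dy$.
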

\begin{proof}
We use the expression in equation \eqref{As2}. Obviously,
\[
\Ds\delta^{-\alpha}(x)=\int_\Omega[\delta(x)^{-\alpha}-\delta(y)^{-\alpha}]J(x,y)\;dy
+\delta(x)^{-\alpha}\kappa(x) \geq \int_\Omega[\delta(x)^{-\alpha}-\delta(y)^{-\alpha}]J(x,y)\;dy.
\]
For any fixed $x\in\Omega$ close to $\partial\Omega$, split the domain $\Omega$ into three parts:
\begin{align*}
& \Omega_1=\left\lbrace y\in\Omega:\delta(y)\geq\frac32\,\delta(x)\right\rbrace,\\
& \Omega_2=\left\lbrace y\in\Omega:\frac12\,\delta(x)<\delta(y)<\frac32\,\delta(x)\right\rbrace,\\
& \Omega_3=\left\lbrace y\in\Omega:\delta(y)\leq\frac12\,\delta(x)\right\rbrace.
\end{align*}
For $y\in\Omega_1$, since $\delta(y)>\delta(x)$, it holds $\delta(x)^{-\alpha}-\delta(y)^{-\alpha}>0$ and we can drop the integral on $\Omega_1$.
Also, since it holds by equation \eqref{Jbound}
\[
J(x,y)\leq \frac{C}{{|x-y|}^{N+2s}},
\]
the integration on $\Omega_2$ can be performed as in \cite[{\it Second step} in Proposition 6]{a2} providing
\[
PV\int_{\Omega_2}[\delta(x)^{-\alpha}-\delta(y)^{-\alpha}]\,
J(x,y)\:dy\geq -C\,\delta(x)^{-\alpha-2s}=-C\,\delta(x)^{-\alpha p}.
\]
To integrate on $\Omega_3$ we exploit once again \eqref{Jbound} under the form
\[
J(x,y)\leq C\cdot\frac{\delta(x)\,\delta(y)}{{|x-y|}^{N+2+2s}}
\]
to deduce
\[
\int_{\Omega_3}[\delta(x)^{-\alpha}-\delta(y)^{-\alpha}]\,
J(x,y)\:dy\geq -C\,\delta(x)\int_{\Omega_3}\frac{\delta(y)^{1-\alpha}}{{|x-y|}^{N+2+2s}}.
\]
Again, a direct computation as in \cite[{\it Third step} in Prooposition 6]{a2} yields
\[
\int_{\Omega_3}[\delta(x)^{-\alpha}-\delta(y)^{-\alpha}]\,
J(x,y)\:dy\geq -C\,\delta(x)^{-\alpha-2s}=-C\,\delta(x)^{-\alpha p}.
\]
\end{proof}

\begin{lem}\label{prel-supersol}
If a function $v\in L^1(\Omega,\delta(x)dx)$ satisfies
\begin{equation}
\Ds v\in L^\infty_{loc}(\Omega),\qquad
\Ds v(x)\ \geq\ -C\,v(x)^p,\quad \hbox{when }\delta(x)<\delta_0,
\end{equation}
for some $C,\delta_0>0$, then there exists $\overline u\in L^1(\Omega,\delta(x)dx)$
such that
\begin{equation}
\Ds\overline u(x)\ \geq\ -\overline u(x)^p,\qquad \hbox{throughout }\Omega.
\end{equation}
\end{lem}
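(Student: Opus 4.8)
The plan is to seek $\overline u$ of the form $\overline u = \lambda\,v + \mu\,\varphi_1$, where $\varphi_1>0$ is the first Dirichlet eigenfunction of $-\lapl|_\Omega$ and $\lambda,\mu>0$ are constants to be fixed large, \emph{in that order}. Implicit in the hypothesis is $v\ge0$ (so that $v^p$ makes sense), hence $\overline u\ge\mu\varphi_1>0$ in $\Omega$ and $\overline u\ge\lambda v$. Since $v\in L^1(\Omega,\delta(x)dx)$ and $\varphi_1$ is bounded, $\overline u\in L^1(\Omega,\delta(x)dx)$; and since $\Ds v\in L^\infty_{loc}(\Omega)$ while $\Ds\varphi_1=\lambda_1^s\varphi_1$, we get $\Ds\overline u=\lambda\,\Ds v+\mu\,\lambda_1^s\varphi_1\in L^\infty_{loc}(\Omega)$. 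All the inequalities below are read in the same (a.e., pointwise) sense as in the hypothesis.

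The verification of $\Ds\overline u\ge-\overline u^p$ follows the dichotomy already built into the hypothesis. On $\{\delta<\delta_0\}$, discard the nonnegative term $\mu\lambda_1^s\varphi_1$ and combine $\Ds v\ge-C\,v^p$ with $v=\lambda^{-1}(\overline u-\mu\varphi_1)\le\lambda^{-1}\overline u$:
\[
\Ds\overline u\ \ge\ \lambda\,\Ds v\ \ge\ -\lambda C\,v^p\ \ge\ -C\,\lambda^{1-p}\,\overline u^p.
\]
As $p>1$, choosing $\lambda\ge C^{1/(p-1)}$ forces $C\lambda^{1-p}\le1$, giving $\Ds\overline u\ge-\overline u^p$ on this set. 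With $\lambda$ now fixed, turn to $K:=\{x\in\Omega:\delta(x)\ge\delta_0\}$, a compact subset of $\Omega$, and set $M:=\|\Ds v\|_{L^\infty(K)}<\infty$, $c_0:=\min_K\varphi_1>0$. There,
\[
\Ds\overline u\ =\ \lambda\,\Ds v+\mu\,\lambda_1^s\varphi_1\ \ge\ -\lambda M+\mu\,\lambda_1^s c_0\ \ge\ 0\ \ge\ -\overline u^p
\]
as soon as $\mu\ge\lambda M/(\lambda_1^s c_0)$. Combining the two regions yields the claim.

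No step is really an obstacle; what needs attention is only the order of the choices: $\lambda$ is dictated by the near-boundary estimate (depending on $C$ and $p$ alone), then $M=\|\Ds v\|_{L^\infty(K)}$ is a finite number — this is exactly the role of the assumption $\Ds v\in L^\infty_{loc}(\Omega)$ — and only afterwards is $\mu$ taken large enough on the compact core. For later use one may also record that the correction $\mu\varphi_1$ does not alter the boundary profile, since $\varphi_1/h_1\asymp\delta^{\,3-2s}\to0$ at $\partial\Omega$ by \eqref{h1-behav} and Hopf's lemma, so $\overline u$ retains the boundary blow-up of $\lambda v$.
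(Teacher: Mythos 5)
Your proof is correct and follows essentially the same strategy as the paper: write $\overline u=\lambda v+\mu w$ with a nonnegative auxiliary $w$ whose spectral fractional Laplacian is bounded away from zero on the compact core $\{\delta\geq\delta_0\}$; fix $\lambda=C^{1/(p-1)}\vee1$ to absorb the near-boundary estimate $\Ds v\ge-Cv^p$, then fix $\mu$ large on the compact set. The only difference is cosmetic: you take $w=\varphi_1$ (with $\Ds\varphi_1=\lambda_1^s\varphi_1\ge\lambda_1^s c_0>0$ on $K$), whereas the paper takes $w=\mathbb{G}_\Omega^s 1$ (with $\Ds w\equiv1$), which lets them set $\mu=\lambda\|\Ds v\|_{L^\infty(\Omega\setminus\Omega_0)}$ directly without the extra constant $c_0$.
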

\begin{proof} Let $\lambda:=C^{1/(p-1)}\vee 1$ and $\Omega_0=\{x\in\Omega:\delta(x)<\delta_0\}$, then
\[
\Ds\left(\lambda v\right)\ \geq\ -\left(\lambda v\right)^p,\qquad \hbox{in }\Omega_0.
\]
Let also $\mu:=\lambda\|\Ds v\|_{L^\infty(\Omega\setminus\Omega_0)}$ and define $\overline u=\mu\mathbb{G}_\Omega^s 1+\lambda v$. On $\overline u$ we have
\[
\Ds\overline u =\mu+\lambda \Ds v\geq \lambda|\Ds v|+\lambda\Ds v\geq -\overline{u}^p
\qquad\hbox{throughout }\Omega.
\]
\end{proof}

\begin{cor}\label{supersol} There exists a function $\overline{u}\in L^1(\Omega,\delta(x)dx)$
such that the inequality
\[
\Ds\overline u\ \geq\ -\overline{u}^p,\qquad \hbox{in }\Omega,
\]
holds in a pointwise sense.
Moreover, $\overline u\asymp\delta^{-2s/(p-1)}$.
\end{cor}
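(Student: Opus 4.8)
We sketch the argument. Throughout this section one works with $p\in\left(1+s,\frac1{1-s}\right)$, as in Theorem \ref{xlarge-theo}, so the exponent $\alpha:=\frac{2s}{p-1}$ lies strictly between $2s$ and $2$; in particular $\delta^{-\alpha}\in L^1(\Omega,\delta(x)dx)$, because $\int_\Omega\delta^{\,1-\alpha}<\infty$ exactly when $\alpha<2$. The plan is to globalize $\delta^{-\alpha}$ — which, by the first lemma of this subsection, already obeys the differential inequality near $\partial\Omega$ — into a single function $v$ to which Lemma \ref{prel-supersol} applies, and then to read off $\overline u$ from that lemma. Since $\Omega$ is of class $C^{1,1}$, the distance $\delta$ is $C^{1,1}$ on $\{0<\delta<\delta_1\}$ for some $\delta_1\in(0,\delta_0)$, hence $\delta^{-\alpha}\in C^{1,1}_{loc}(\{0<\delta<\delta_1\})\subset C^{2s+\eps}_{loc}$ for $\eps<2-2s$. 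I would fix $0<a<b<\delta_1$ and a cut-off $\eta\in C^\infty(\super\Omega)$ with $\eta\equiv1$ on $\{\delta\le a\}$ and $\eta\equiv0$ on $\{\delta\ge b\}$, and set
\[
v\ :=\ \eta\,\delta^{-\alpha}+(1-\eta)\,M,\qquad M:=\Big(\sup_\Omega\delta\Big)^{-\alpha}
\]
(smoothing $v$ on $\{\delta>a/2\}$ if the gluing at $\{\delta=b\}$ requires it). By construction $v>0$, $v=\delta^{-\alpha}$ on $\{\delta\le a\}$, and since $M\le\delta^{-\alpha}$ pointwise the convex combination gives $v\le\delta^{-\alpha}$ on all of $\Omega$; moreover $v\in C^{2s+\eps}_{loc}(\Omega)\cap L^1(\Omega,\delta(x)dx)$ and, by \eqref{As2}, $\Ds v$ is well-defined and continuous on $\Omega$, hence $\Ds v\in L^\infty_{loc}(\Omega)$.

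Next I would verify the hypothesis $\Ds v(x)\ge -C\,v(x)^p$ for $\delta(x)<\delta_0$, for a suitable constant $C$. For $\delta(x)\le a$ one has $v(x)=\delta(x)^{-\alpha}$ and, subtracting the two expressions \eqref{As2},
\[
\Ds v(x)-\Ds\delta^{-\alpha}(x)\ =\ \int_\Omega\bigl[\delta(y)^{-\alpha}-v(y)\bigr]J(x,y)\,dy\ \ge\ 0
\]
because $v\le\delta^{-\alpha}$ (the integral converges, the integrand being supported in $\{\delta(y)>a\}$, where it is bounded and $J(x,\cdot)$ is integrable); combined with the first lemma of this subsection this gives $\Ds v(x)\ge\Ds\delta^{-\alpha}(x)\ge -C_1\,\delta(x)^{-\alpha p}=-C_1\,v(x)^p$. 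On the shell $\{a\le\delta(x)<\delta_0\}$, which sits in a fixed compact subset of $\Omega$, $\Ds v$ is bounded, say $|\Ds v|\le K$ there, while $v\ge c_0:=\min(\delta_0^{-\alpha},M)>0$ on $\{\delta<\delta_0\}$; hence $\Ds v(x)\ge -K\ge -(K/c_0^{\,p})\,v(x)^p$ on that shell. Taking $C:=\max\{C_1,K/c_0^{\,p}\}$, all hypotheses of Lemma \ref{prel-supersol} hold.

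That lemma then produces $\overline u\in L^1(\Omega,\delta(x)dx)$ with $\Ds\overline u\ge -\overline u^{\,p}$ on $\Omega$; from its proof $\overline u=\mu\,\mathbb{G}_\Omega^s 1+\lambda v$ with $\mu\ge0$ and $\lambda=C^{1/(p-1)}\vee1$, and since $\mathbb{G}_\Omega^s 1=\Dms 1\in C^{2s+\eps}_{loc}(\Omega)$ by Lemma \ref{clas-harm} and $v\in C^{2s+\eps}_{loc}(\Omega)$, both summands allow \eqref{As2}, so the inequality holds in the pointwise sense. Finally $\mathbb{G}_\Omega^s 1$ is bounded and $v\asymp\delta^{-\alpha}$ — indeed $v=\delta^{-\alpha}$ on $\{\delta\le a\}$, while on $\{\delta\ge a\}$ both $v$ and $\delta^{-\alpha}$ lie between two positive constants — and $1\le(\sup_\Omega\delta)^{\alpha}\,\delta^{-\alpha}$ on $\Omega$; hence $\overline u\asymp\delta^{-\alpha}=\delta^{-2s/(p-1)}$, as claimed. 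The one genuinely delicate point is the transition shell $\{a<\delta<\delta_0\}$: there the first lemma of this subsection is not available, and the differential inequality must instead be salvaged by enlarging $C$ — which is legitimate precisely because $v$ is bounded away from $0$ on that shell.
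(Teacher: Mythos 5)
Your proof follows the same strategy as the paper, namely applying Lemma \ref{prel-supersol} to a function behaving like $\delta^{-\alpha}$ with $\alpha=2s/(p-1)$; the paper simply writes ``apply Lemma \ref{prel-supersol} with $v=\delta^{-2s/(p-1)}$,'' leaving implicit the regularity of $\delta^{-\alpha}$ away from $\partial\Omega$ (where $\delta$ is merely Lipschitz on the ridge set), which your cut-off construction $v=\eta\,\delta^{-\alpha}+(1-\eta)M$ resolves explicitly --- a sensible and correct addition. One small slip: from $p\in(1+s,\tfrac1{1-s})$ one gets $\alpha\in(2-2s,\,2)$, not $(2s,2)$; this is harmless since only $\alpha<2$ is used, but the stated lower bound $2s$ is wrong whenever $s\neq\tfrac12$.
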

\begin{proof}
Apply Lemma \ref{prel-supersol} with $v=\delta^{-2s/(p-1)}$:
the corresponding $\overline u$ will be of the form
\[
\overline u=\mu\mathbb{G}_\Omega^s1+\lambda\delta^{-2s/(p-1)}.
\]
\end{proof}

\subsection{Existence}

\begin{lem}\label{ujlequ} 
For any $j\in\N$, the solution $u_j$ to problem \eqref{approx} satisfies the upper bound
\[
u_j\ \leq\ \overline{u},\qquad\hbox{in }\Omega,
\]
where $\overline{u}$ is provided by Corollary \ref{supersol}.
\end{lem}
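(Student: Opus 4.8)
The plan is to prove this by contradiction with the pointwise maximum principle, in the exact spirit of Lemma~\ref{maxprinc-clas}, after a preliminary step that both upgrades the regularity of $u_j$ and localizes the set where $u_j$ could conceivably exceed $\overline u$.

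First I would record that, by the equivalent reformulation of \eqref{nonhom-prob} used in the proof of Theorem~\ref{nonhom-cor}, the solution $u_j$ satisfies $0\le u_j\le\mathbb P_\Omega^s(j)=j\,h_1$; in particular $u_j\in L^\infty_{loc}(\Omega)$, hence $-u_j^p\in L^\infty_{loc}(\Omega)$, and a bootstrap with Lemma~\ref{clas-harm} gives $u_j\in C^\infty(\Omega)$ with $\Ds u_j=-u_j^p$ holding pointwise. Next, comparing $u_j\le j\,h_1\le Cj\,\delta^{-(2-2s)}$ (by \eqref{h1-behav}) with $\overline u\gtrsim\delta^{-2s/(p-1)}$ (Corollary~\ref{supersol}) and using the strict inequality of exponents $\tfrac{2s}{p-1}>2-2s$ --- which is exactly equivalent to $p<\tfrac1{1-s}$ and therefore holds here --- I get $u_j(x)/\overline u(x)\to 0$ as $\delta(x)\to0$. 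Consequently the continuous function $w:=u_j-\overline u\in C^{2s+\eps}_{loc}(\Omega)\cap L^1(\Omega,\delta(x)dx)$ (the $L^1$ part from \eqref{cont} and Corollary~\ref{supersol}) is strictly negative on some one-sided neighbourhood $\{\delta<\delta_1\}$ of $\partial\Omega$; thus $w$ is bounded above, and if $\sup_\Omega w>0$ then this supremum is attained at some interior point $x^*\in\{\delta\ge\delta_1\}\subset\subset\Omega$.

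The core of the argument is then to rule out $M:=w(x^*)=\sup_\Omega w>0$. At such an $x^*$ one has $w(x^*)\ge w(y)$ for every $y\in\Omega$, so evaluating the pointwise formula \eqref{As2} (legitimate since $w\in C^{2s+\eps}_{loc}\cap L^1(\Omega,\delta(x)dx)$) yields
\[
\Ds w(x^*)\ =\ PV\!\int_\Omega[w(x^*)-w(y)]\,J(x^*,y)\,dy+\kappa(x^*)\,w(x^*)\ >\ 0,
\]
the strict sign coming from $J>0$, $\kappa\ge0$, $w(x^*)>0$, and the fact that $w$ is not a.e. constant --- indeed the part of the integral over $\{\delta<\delta_1\}$, which lies at positive distance from $x^*$ and involves no singular integration, contributes a genuinely positive amount. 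On the other hand $\Ds w(x^*)=-u_j(x^*)^p-\Ds\overline u(x^*)\le\overline u(x^*)^p-u_j(x^*)^p$ by Corollary~\ref{supersol}, and since $u_j(x^*)=M+\overline u(x^*)>\overline u(x^*)\ge0$ while $p>0$, the right-hand side is strictly negative, a contradiction. Hence $\sup_\Omega w\le0$, i.e. $u_j\le\overline u$ in $\Omega$.

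The only delicate point is the first paragraph: one must make sure the solution furnished by Theorem~\ref{nonhom-cor} really obeys $u_j\le j\,h_1$ (so that it is locally bounded and smooth) and that $\tfrac{2s}{p-1}>2-2s$ genuinely pushes $w$ below zero uniformly near $\partial\Omega$. Once this localization is secured, the contradiction is immediate and mirrors Lemma~\ref{maxprinc-clas} verbatim. Alternatively, one could run Kato's inequality (Lemma~\ref{kato}) with $\Phi(t)=t^+$ on $w$ to obtain $\Ds w^+\le\chi_{\{w>0\}}(\overline u^p-u_j^p)\le0$ with $w^+$ compactly supported, and then conclude $w^+\equiv0$ from the maximum principle; but the direct pointwise version is cleaner and avoids approximating $t^+$.
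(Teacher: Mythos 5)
Your argument is correct and is essentially the one in the paper: use $0\le u_j\le jh_1\lesssim\delta^{-(2-2s)}$ together with $\overline u\gtrsim\delta^{-2s/(p-1)}$ and the exponent comparison $p<1/(1-s)$ to force $u_j<\overline u$ in a boundary layer (depending on $j$), then rule out an interior positive maximum of $u_j-\overline u$ via the pointwise representation \eqref{As2}, exactly as in Lemma \ref{maxprinc-clas}. The only cosmetic difference is that the paper locates the maximum of $u_j^p-\overline u^p$ rather than of $u_j-\overline u$; your choice of maximizing the difference directly makes the sign of $\Ds(\overline u-u_j)$ at that point immediate and is arguably the cleaner formulation of the same idea.
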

\begin{proof} Write $u_j=jh_1-v_j$ where
\[
\left\lbrace\begin{aligned}
\Ds v_j &=\ \left(jh_1-v_j\right)^p & \hbox{ in }\Omega \\
\frac{v_j}{h_1} &=\ 0 & \hbox{ on }\partial\Omega.
\end{aligned}\right.
\]
and $0\leq v_j\leq jh_1$. Since $\left(jh_1-v_j\right)^p\in L^\infty_{loc}(\Omega)$,
we deduce that $v_j\in C^\alpha_{loc}(\Omega)$ for any $\alpha\in(0,2s)$. 
By bootstrapping $v_j\in C^\infty(\Omega)$ and, by Lemma \ref{clas-harm}, also $u_j\in C^\infty(\Omega)$.
This says that $u_j$ is a classical solution to problem \eqref{approx}.
Now, we have that, 
by the boundary behaviour of $\overline{u}$ stated in Corollary \ref{supersol},
$u_j\leq \overline{u}$ close enough to $\partial\Omega$ (depending on the value of $j$)
and
\[
\Ds\left(\overline{u}-u_j\right)\ \geq\ u_j^p-\overline{u}^p,\qquad\hbox{in }\Omega.
\]
Since $u_j^p-\overline{u}^p\in C(\Omega)$ and
$\lim_{x\to\partial\Omega}u_j^p-\overline{u}^p=-\infty$, there exists $x_0\in\Omega$
such that $u_j(x_0)^p-\overline{u}(x_0)^p=m=:\max_{x\in\Omega}\left(u_j(x)^p-\overline{u}(x)^p\right)$.
If $m>0$ then $\Ds(\overline{u}-u_j)(x_0)\geq m > 0$: this is a contradiction,
as Definition \ref{As2} implies. 
Thus $m\leq 0$ and $u_j\leq \overline{u}$ throughout $\Omega$.
\end{proof}

\begin{theo} For any $\displaystyle p\in\left(1+s,\frac1{1-s}\right)$ 
there exists a function $u\in L^1(\Omega,\delta(x)dx)$
solving 
\[
\left\lbrace\begin{aligned}
\Ds u &=-u^p & \hbox{in }\Omega \\
\delta^{2-2s}u &=+\infty & \hbox{on }\partial\Omega
\end{aligned}\right.
\]
both in a distributional and pointwise sense.
\end{theo}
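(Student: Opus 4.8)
The plan is to obtain the desired solution $u$ as the pointwise (increasing) limit of the sequence $\{u_j\}$ defined by \eqref{approx} and to pass to the limit in every clause of the statement, the serious point being the genuine blow‑up of $u/h_1$ at $\partial\Omega$.

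\emph{Step 1 (passage to the limit, a priori bounds).} By Lemma \ref{ujlequ} we have $0\le u_j\le\overline u$ for every $j$, where $\overline u$ is the supersolution of Corollary \ref{supersol}, so $\overline u\asymp\delta^{-2s/(p-1)}$. The sequence being monotone, $u:=\lim_j u_j=\sup_j u_j$ exists pointwise, is finite on $\Omega$, and satisfies $0\le u\le\overline u\le C\delta^{-2s/(p-1)}$. The hypothesis $p>1+s$ is exactly $2s/(p-1)<2$, hence $\delta^{1-2s/(p-1)}\in L^1(\Omega)$ and $u\in L^1(\Omega,\delta(x)dx)$; this is the only place where the lower bound on $p$ enters. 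Now fix $\psi\in C^\infty_c(\Omega)$. Since $u_j$ weakly solves \eqref{approx}, Lemma \ref{clas-weak}(1) gives $\int_\Omega u_j\,\Ds\psi=-\int_\Omega u_j^p\,\psi$. On the left, $|u_j\,\Ds\psi|\le C\,\overline u\,\delta\in L^1(\Omega)$ by Lemma \ref{As-smooth}, so dominated convergence yields $\int_\Omega u_j\,\Ds\psi\to\int_\Omega u\,\Ds\psi$; on the right, $0\le u_j^p\uparrow u^p$ and $u_j^p\le\overline u^p\in L^1_{loc}(\Omega)$, so monotone convergence (applied to the positive and negative parts of $\psi$) gives $\int_\Omega u_j^p\,\psi\to\int_\Omega u^p\,\psi$. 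Therefore $\Ds u=-u^p$ in $\mathcal D'(\Omega)$.

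\emph{Step 2 (interior regularity).} Since $u\le\overline u$ is locally bounded, $u^p\in L^\infty_{loc}(\Omega)$, so Lemma \ref{clas-harm} gives $u\in C^\alpha_{loc}(\Omega)$ for $\alpha\in(0,2s)$; then $u^p\in C^\alpha_{loc}(\Omega)$, and bootstrapping with Lemma \ref{clas-harm} yields $u\in C^\infty(\Omega)$ and the identity $\Ds u(x)=-u(x)^p$ at every $x\in\Omega$ (Lemma \ref{u-to-lapl}). Together with Step 1 this gives the equation both pointwise and distributionally, and the bound $0\le u\le C\delta^{-2s/(p-1)}$.

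\emph{Step 3 (boundary blow‑up — the main obstacle).} Writing $u_j=jh_1-v_j$ as in the proof of Lemma \ref{ujlequ}, we have $v_j=\mathbb G^s_\Omega(u_j^p)\ge0$, and, using $h_1\le C\delta^{-(2-2s)}$ and $v_j\ge0$, $u_j^p\le Cj^p\delta^{-(2-2s)p}$. The boundary condition will follow once we show that, for each fixed $j$, $\delta(x)^{2-2s}v_j(x)\to0$ as $x\to\partial\Omega$. This reduces to the Green‑potential estimate
\[
\mathbb G^s_\Omega\big(\delta^{-a}\big)(x)\ \le\ C_a\,\delta(x)^{2s-a},\qquad a:=(2-2s)p\in(0,2),
\]
a boundary computation of the same type as those carried out in the proofs of Proposition \ref{Lp-reg} and Theorem \ref{bound-g} (flatten $\partial\Omega$ via the appendix, insert \eqref{green-behav}, rescale by $\delta(x)$); the constraint $a<2$, i.e. $p<1/(1-s)$, is precisely what makes the resulting $\rho,t$–integral converge. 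Granting this, $\delta^{2-2s}v_j\le Cj^p\,\delta^{\,2-(2-2s)p}\to0$, the exponent $2-(2-2s)p$ being positive exactly when $p<1/(1-s)$; hence $u_j/h_1\to j$ pointwise at each $x_0\in\partial\Omega$. Since $u\ge u_j$ and $\tfrac1C\delta^{-(2-2s)}\le h_1$, we get $\liminf_{x\to x_0}\delta(x)^{2-2s}u(x)\ge cj$ for every $j$, so $\delta(x)^{2-2s}u(x)\to+\infty$, i.e. $u/h_1\to+\infty$ as $x\to\partial\Omega$. This completes the proof.

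\emph{Where the difficulty lies.} The only nonroutine point is Step 3: one must show that the correctors $v_j=\mathbb G^s_\Omega(u_j^p)$ stay genuinely negligible relative to $jh_1$ near the boundary, which rests on the sharp Green‑potential bound above and on the interplay between the two constraints on $p$ — $p>1+s$ for $u\in L^1(\Omega,\delta(x)dx)$, $p<1/(1-s)$ for both the solvability of \eqref{approx} and the vanishing of $\delta^{2-2s}v_j$.
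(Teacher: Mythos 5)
Your Steps~1 and~2 track the paper's proof essentially verbatim: the monotone approximating sequence $\{u_j\}$ from \eqref{approx}, the uniform bound $u_j\le\overline u$ from Lemma~\ref{ujlequ}, passage to the limit in $\mathcal D'(\Omega)$, and the interior bootstrap to $C^\infty(\Omega)$.

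Step~3 is where you diverge, and usefully so. The paper simply asserts $\liminf_{x\to\partial\Omega}u_j(x)/h_1(x)=j$ without further comment, whereas the construction via Theorem~\ref{nonhom-cor} only yields the weak (averaged) boundary trace of Lemma~\ref{clas-weak}(1), and Lemma~\ref{clas-weak}(2) does not apply directly because $\mu=-u_j^p$ is not in $C^\alpha(\overline\Omega)$. Your decomposition $u_j=jh_1-v_j$ with $v_j=\mathbb G_\Omega^s(u_j^p)$, together with a Green-potential estimate showing $\delta^{2-2s}v_j\to0$, is exactly the missing argument. So you have identified and filled a genuine gap that the paper glosses over.

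One caveat on the Green-potential bound you invoke: the estimate $\mathbb G_\Omega^s(\delta^{-a})\lesssim\delta^{2s-a}$ is sharp and correct for $a\in(2s-1,2)$, but fails for $a\le 2s-1$, where the rescaled integral over the half-space does not converge at infinity and the true behaviour is $\mathbb G_\Omega^s(\delta^{-a})\asymp\delta$ (with a $\delta\log(1/\delta)$ correction at $a=2s-1$); this is visible already at $a=0$, where $\mathbb G_\Omega^s(1)\asymp\delta^{\min(1,2s)}$. In your application $a=(2-2s)p>2(1-s^2)$, and $2(1-s^2)\le 2s-1$ is possible when $s>(\sqrt7-1)/2\approx0.82$ with $p$ close to $1+s$, so the estimate as you state it is not valid throughout the admissible range. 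The correct upper bound
\[
\mathbb G_\Omega^s\bigl(\delta^{-a}\bigr)\ \lesssim\ \delta^{\min(1,\,2s-a)},\qquad a\in(0,2),\ a\neq 2s-1,
\]
still gives $\delta^{2-2s}v_j\lesssim\delta^{\min(3-2s,\,2-(2-2s)p)}\to0$, so your conclusion is unaffected; just be aware the intermediate bound needs the $\min$ to be honest across all $s\in(0,1)$.
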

\begin{proof} 
Consider the sequence $\{u_j\}_{j\in\N}$ provided by problem \ref{approx}:
it is increasing and locally bounded by Lemma \ref{ujlequ},
so it has a pointwise limit $u\leq\overline{u}$, 
where $\overline u$ is the function provided by Corollary \ref{supersol}. 
Since $p>1+s$ and $\overline u \leq C\delta^{-2s/(p-1)}$, then $u\in L^1(\Omega,\delta(x)dx)$. 
Pick now $\psi\in C^\infty_c(\Omega)$, and recall that $\delta^{-1}\Ds\psi\in L^\infty(\Omega)$: 
we have, by dominated convergence,
\[
\int_\Omega u_j\Ds\psi\xrightarrow[j\uparrow\infty]{}\int_\Omega u\Ds\psi,
\qquad \int_\Omega u_j^p\psi\xrightarrow[j\uparrow\infty]{}\int_\Omega u^p\psi
\]
so we deduce
\[
\int_\Omega u\Ds\psi\ =\ -\int_\Omega u^p\psi.
\]
Note now that for any compact $K\subset\subset\Omega$, 
applying Lemma \ref{clas-harm} we get for any $\alpha\in(0,2s)$
\[
\|u_j\|_{C^\alpha(K)}\leq C\left(
\|u_j\|^p_{L^\infty(K)}+\|u_j\|_{L^1(\Omega,\delta(x)dx)}\right)
\leq C\left(
\|\overline u\|^p_{L^\infty(K)}+\|\overline u\|_{L^1(\Omega,\delta(x)dx)}\right)
\]
which means that ${\{u_j\}}_{j\in\N}$ is equibounded and equicontinuous in $C(K)$. By the Ascoli-Arzel\`a Theorem, its pointwise limit $u$ will be in $C(K)$ too. Now, since 
\[
\Ds u\ =\ -u^p\qquad \hbox{in }\mathcal D'(\Omega),
\]
by bootstrapping the interior regularity in Lemma \ref{clas-harm}, we deduce $u\in C^\infty(\Omega)$. So, its spectral fractional Laplacian is pointwise well-defined and the equation is satisfied in a pointwise sense. Also,
\[
\liminf_{x\to\partial\Omega}\frac{u(x)}{h_1(x)}\geq
\liminf_{x\to\partial\Omega}\frac{u_j(x)}{h_1(x)}=j
\]
and we obtain the desired boundary datum.
\end{proof}

\appendix

\section{Appendix}

\subsection{Another representation for the spectral fractional Laplacian}

\begin{lem} For any $u\in H(2s)$ and almost every $x\in\Omega$, there holds
\[\Ds u(x)\ =  PV\int_\Omega[u(x)-u(y)]J(x,y)\;dy+\kappa(x)u(x),\]
where $J(x,y)$ and $\kappa(x)$ are given by \eqref{jkappa}.
\end{lem}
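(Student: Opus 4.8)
The plan is to read off \eqref{As2} from the Balakrishnan subordination formula for $\Ds$, separating the part of the Dirichlet heat semigroup $e^{\trest}$ responsible for the mass lost through $\partial\Omega$ (this will give the killing term $\kappa(x)u(x)$) from the genuinely diffusive part (this will give the jumping term). First I would record that, for $u\in H(2s)$ and with convergence in $L^2(\Omega)$,
\[
\Ds u\;=\;\frac{s}{\Gamma(1-s)}\int_0^\infty\frac{u-e^{\trest}u}{t^{1+s}}\;dt,
\]
which is proved exactly as the analogous identity used in the proof of Lemma \ref{clas-harm}: on an eigenfunction $\varphi_j$ it reduces to $\frac{s}{\Gamma(1-s)}\int_0^\infty t^{-1-s}(1-e^{-\lambda_j t})\,dt=\lambda_j^s$ (integrate by parts once and recognise $\Gamma(1-s)$), and the uniform bound $t^{-s}(1-e^{-\lambda t})\le\lambda^s$ together with Parseval extends it to all of $H(2s)$; passing to a subsequence of the truncations $\int_{1/n}^{n}$, the identity then also holds for a.e.\ $x$.

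Next, for fixed $x$ I would write $e^{\trest}u(x)=\int_\Omega p_\Omega(t,x,y)u(y)\,dy$ and split
\[
u(x)-e^{\trest}u(x)\;=\;u(x)\Bigl(1-\int_\Omega p_\Omega(t,x,y)\,dy\Bigr)+\int_\Omega\bigl(u(x)-u(y)\bigr)p_\Omega(t,x,y)\,dy.
\]
Multiplying by $\frac{s}{\Gamma(1-s)}t^{-1-s}$ and integrating over $t\in(0,\infty)$, the first summand reproduces $\kappa(x)u(x)$ verbatim from \eqref{jkappa} (the integral converges since $1-\int_\Omega p_\Omega(t,x,y)\,dy$ decays exponentially as $t\downarrow0$ for interior $x$ and is $\le1$ as $t\uparrow\infty$). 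For the second summand, away from the diagonal Tonelli applies: by \eqref{hkb} one has $\int_0^\infty t^{-1-s}p_\Omega(t,x,y)\,dt=\tfrac{\Gamma(1-s)}{s}J(x,y)$ and $J(x,\cdot)\bigl(|u(x)|+|u(\cdot)|\bigr)\in L^1(\{|x-\cdot|>\eps\})$, so the $\eps$-truncation of this summand equals $\int_{\{|x-y|>\eps\}}(u(x)-u(y))J(x,y)\,dy$.

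The main obstacle is then letting $\eps\downarrow0$: for $s\ge\tfrac12$ the integrand $(u(x)-u(y))J(x,y)$ is not absolutely integrable near $y=x$, so one must commute the $t$-integral with the principal-value limit in $y$. I would handle this by first proving the identity for $u$ in the linear span of the $\varphi_j$ — dense in $H(2s)$, and consisting of functions smooth in $\Omega$, for which $PV\int_\Omega(u(x)-u(y))J(x,y)\,dy$ genuinely converges because $J(x,\cdot)$ is symmetric (hence even to leading order at $y=x$). For such $u$, expanding $u(x)-u(y)$ to second order about $x$ and invoking \eqref{hkb} together with the cancellation of the odd part against the near-symmetric kernel, one shows that the near-diagonal remainder $\frac{s}{\Gamma(1-s)}\int_0^\infty t^{-1-s}\int_{\{|x-y|\le\eps\}}(u(x)-u(y))p_\Omega(t,x,y)\,dy\,dt$ vanishes as $\eps\downarrow0$, which yields the claimed equality. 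Finally one removes the smoothness assumption by $L^2(\Omega)$-approximation, controlling the left-hand side through the formula of Step~1 and the right-hand side through \eqref{hkb}--\eqref{Jbound}. I expect this interchange across the diagonal singularity — and the accompanying passage to the limit in $u$ — to be the only genuinely delicate point; the rest is bookkeeping.
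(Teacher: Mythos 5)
Your proof follows essentially the same route as the paper's: establish the Balakrishnan formula \eqref{def2} on eigenfunctions, split the integrand into a killing term and a jumping term, and extend by $L^2(\Omega)$-density plus Fubini to all $u\in H(2s)$ and a.e.\ $x$. You make explicit the near-diagonal cancellation argument for the principal value that the paper passes over silently; this extra care is warranted (the first-order bound $|u(x)-u(y)|\lesssim|x-y|$ alone is insufficient when $s\ge\tfrac12$) and does not change the overall strategy.
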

\begin{proof}
Assume that $u=\varphi_j$ is an eigenfunction of the Dirichlet Laplacian associated to the eigenvalue $\lambda_j$. Then, $\Ds u =\lambda_j^s u$,
$e^{t\lapl|_\Omega}u=\int_\Omega p_\Omega(t,\cdot,y)\,u(y)\:dy=e^{-t\lambda_j}u$
and for all $x\in\Omega$
\begin{align}
& \frac{\Gamma(1-s)}s\Ds u(x)\ = \nonumber \\
& =\ \int_0^\infty\left(u(x)-e^{t\left.\lapl\right|_\Omega}u(x)\right)\frac{dt}{t^{1+s}} \label{def2} \\
& =\ \int_0^\infty\left(u(x)-\int_\Omega p_\Omega(t,x,y)u(y)\;dy\right)\frac{dt}{t^{1+s}} \nonumber \\
& =\ \lim_{\epsilon\to0}\int_0^\infty\int_{\Omega\setminus B(x,\epsilon)} p_\Omega(t,x,y)[u(x)-u(y)]\;dy\;\frac{dt}{t^{1+s}}
+\int_0^\infty u(x)\left(1-\int_\Omega p_\Omega(t,x,y)\;dy\right)\;\frac{dt}{t^{1+s}} \nonumber \\
& =\ PV\int_\Omega[u(x)-u(y)]J(x,y)\;dy+\kappa(x)u(x) \label{split}
\end{align}
By linearity, equality holds on the linear span of the eigenvectors. Now, if $u\in H(2s)$, a sequence ${\{u_n\}}_{n\in\N}$ of functions belonging to that span  converges to $u$ in $H(2s)$. In particular, $\Ds u_n$ converges to $\Ds u$ in $L^2(\Omega)$. Note also that for $v\in L^2(\Omega)$,
\[
\frac{s}{\Gamma(1-s)}\left\vert\int_\Omega\int_0^\infty\left(u(x)-e^{t\left.\lapl\right|_\Omega}u(x)\right)\frac{dt}{t^{1+s}}\cdot v(x)\:dx\right\vert
=\left\vert\sum_{k=1}^{+\infty}\lambda_k^s\widehat u_k \widehat v_k\right\vert
\le\Vert u\Vert_{H(2s)}\Vert v\Vert_{L^2(\Omega)}
\]
so that we may also pass to the limit in $L^2(\Omega)$ 
when computing \eqref{def2} along the sequence ${\{u_n\}}_{n\in\N}$. 
By the Fubini's theorem, for almost every $x\in\Omega$, 
all subsequent integrals are convergent 
and the identities remain valid. 
\end{proof}

\subsection{The reduction to the flat case}

In this paragraph we are going to justify the computation
of the asymptotic behaviour of integrals of the type
\[
\int_{A\cap\Omega} F(\delta(x),\delta(y),|x-y|)\;dy \qquad\hbox{as }\ \delta(x)\downarrow 0,
\]
where $A$ is a fixed neighbourhood of $x$ with $A\cap\partial\Omega\neq\emptyset$, by just looking at
\[
\int_0^1dt\int_Bdy'\; F\!\left(\delta(x),t,\sqrt{|y'|^2+|t-\delta(x)|^2}\right).
\]
The first thing to be proved is that
\[
|x-y|^2\asymp|x_0-y_0|^2+|\delta(x)-\delta(y)|^2,
\]
where $x_0,y_0$ are respectively the projections of $x,y$ on $\partial\Omega$.
\begin{lem}
There exists $\eps=\eps(\Omega)>0$ such that
for any $x\in\Omega$, $x=x_0+\delta(x)\grad\delta(x_0)$, $x_0\in\partial\Omega$, with $\delta(x)<\eps$ and any $y\in\Omega$
with $\delta(y)<\eps$ and $|y_0-x_0|<\eps$
\[
\frac12\left(|x_0-y_0|^2+|\delta(x)-\delta(y)|^2\right)\leq
|x-y|^2\leq \frac32\left(|x_0-y_0|^2+|\delta(x)-\delta(y)|^2\right).
\]
\end{lem}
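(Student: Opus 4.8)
The plan is to reduce everything to one algebraic identity for $x-y$ and then control the resulting cross terms by the $C^{1,1}$ regularity of $\delta$ near $\partial\Omega$.

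First I would recall the standard collar structure: since $\Omega\in C^{1,1}$ there is $\eps_0=\eps_0(\Omega)>0$ such that on $\{z:\delta(z)<\eps_0\}$ the distance $\delta$ is of class $C^{1,1}$ with $|\grad\delta|\equiv 1$, every point $z$ has a unique nearest boundary point $z_0$, the representation $z=z_0+\delta(z)\,\grad\delta(z_0)$ holds, and $\grad\delta$ (equivalently the inward unit normal $\nu=\grad\delta|_{\partial\Omega}$) is $L$-Lipschitz for some $L=L(\Omega)$. I will take $\eps\le\eps_0$, so that the hypotheses $\delta(x),\delta(y)<\eps$, $|x_0-y_0|<\eps$ place both points in this collar and make $x_0,y_0$ well defined. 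I would also use the \emph{quadratic flatness} of a $C^{1,1}$ hypersurface,
\[
|\grad\delta(x_0)\cdot(x_0-y_0)|\ \le\ C(\Omega)\,|x_0-y_0|^2\qquad\text{for }x_0,y_0\in\partial\Omega\text{ close,}
\]
which follows by applying Taylor's formula to the signed distance function at $x_0$: it vanishes at both $x_0$ and $y_0$, and its gradient $\grad\delta$ is Lipschitz, so the first-order remainder is quadratic.

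Next I would write
\[
x-y\ =\ (x_0-y_0)\ +\ (\delta(x)-\delta(y))\,\grad\delta(x_0)\ +\ \delta(y)\bigl(\grad\delta(x_0)-\grad\delta(y_0)\bigr)
\]
and expand $|x-y|^2$. Since $|\grad\delta(x_0)|=1$, the square equals $|x_0-y_0|^2+(\delta(x)-\delta(y))^2$ plus four error terms: $\delta(y)^2|\grad\delta(x_0)-\grad\delta(y_0)|^2\le L^2\eps^2|x_0-y_0|^2$; the cross term $2(\delta(x)-\delta(y))\,\grad\delta(x_0)\cdot(x_0-y_0)$, which by the quadratic flatness estimate and Young's inequality (with weight proportional to $\eps$) is $\le C(\Omega)\eps\bigl((\delta(x)-\delta(y))^2+|x_0-y_0|^2\bigr)$; $2\delta(y)\,(x_0-y_0)\cdot(\grad\delta(x_0)-\grad\delta(y_0))\le 2L\eps|x_0-y_0|^2$; and $2\delta(y)(\delta(x)-\delta(y))\,\grad\delta(x_0)\cdot(\grad\delta(x_0)-\grad\delta(y_0))\le L\eps\bigl((\delta(x)-\delta(y))^2+|x_0-y_0|^2\bigr)$. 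Summing, the total error is at most $c(\Omega)\,\eps\bigl(|x_0-y_0|^2+(\delta(x)-\delta(y))^2\bigr)$, so choosing $\eps=\eps(\Omega)\le\eps_0$ with $c(\Omega)\,\eps\le\tfrac12$ yields both inequalities.

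The one genuinely non-routine ingredient is the quadratic flatness estimate: without it the cross term $2(\delta(x)-\delta(y))\,\grad\delta(x_0)\cdot(x_0-y_0)$ is merely \emph{comparable} to, not small against, the main quantity, and the conclusion fails. This is precisely where the $C^{1,1}$ assumption on $\partial\Omega$ (as opposed to $C^1$) enters; everything else is Cauchy--Schwarz, Young's inequality, and the Lipschitz bound on $\grad\delta$.
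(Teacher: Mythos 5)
Your proof is correct and follows essentially the same route as the paper's: both expand $|x-y|^2$ via the identical decomposition $x-y=(x_0-y_0)+(\delta(x)-\delta(y))\grad\delta(x_0)+\delta(y)\bigl(\grad\delta(x_0)-\grad\delta(y_0)\bigr)$, bound the resulting cross terms using the Lipschitz control on $\grad\delta$ together with the quadratic-flatness estimate $|\grad\delta(x_0)\cdot(x_0-y_0)|\lesssim|x_0-y_0|^2$, and then absorb the error into the main quantity by taking $\eps$ small. The only difference is presentational: you make the Young/absorption step explicit and flag quadratic flatness as the one non-routine ingredient, whereas the paper simply lists the same estimates and asserts the error can be reabsorbed.
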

\begin{proof} Call $\Omega_\eps=\{x\in\Omega:\delta(x)<\eps\}$.
Write $x=x_0+\delta(x)\grad\delta(x_0),y=y_0+\delta(y)\grad\delta(y_0)$.
Then
\begin{multline}\label{444}
 |x-y|^2=|x_0-y_0|^2+|\delta(x)-\delta(y)|^2+\delta(y)^2|\grad\delta(x_0)-\grad\delta(y_0)|^2
+2[\delta(x)-\delta(y)]\langle x_0-y_0,\grad\delta(x_0)\rangle\ + \\
+\ 2\delta(y)\langle x_0-y_0,\grad\delta(x_0)-\grad\delta(y_0)\rangle
+2\delta(y)[\delta(x)-\delta(y)]\langle\grad\delta(x_0),\grad\delta(x_0)-\grad\delta(y_0)\rangle.
\end{multline}
Since, for $\eps>0$ small, $\delta\in C^{1,1}(\Omega_\eps)$ and
\begin{align*}
& |\grad\delta(x)-\grad\delta(y)|^2\leq \|\delta\|_{C^{1,1}(\Omega_\eps)}^2|x-y|^2 \\
& \langle x_0-y_0,\grad\delta(x_0)\rangle = O(|x_0-y_0|^2) \\
& |\langle x_0-y_0,\grad\delta(x_0)-\grad\delta(y_0)\rangle| \leq \|\delta\|_{C^{1,1}(\Omega_\eps)}|x_0-y_0|^2 \\
& |\delta(x)-\delta(y)|=\|\delta\|_{C^{1,1}(\Omega_\eps)}|x-y| \\
& |\langle\grad\delta(x_0),\grad\delta(x_0)-\grad\delta(y_0)\rangle|\leq \|\delta\|_{C^{1,1}(\Omega_\eps)}|x_0-y_0|
\end{align*}
The error term we obtained in \eqref{444} can be reabsorbed in the other ones by choosing $\eps>0$ small enough to have
\begin{multline*}
\delta(y)^2|\grad\delta(x_0)-\grad\delta(y_0)|^2+
2|\delta(x)-\delta(y)|\cdot|\langle x_0-y_0,\grad\delta(x_0)\rangle|+2\delta(y)|\langle x_0-y_0,\grad\delta(x_0)-\grad\delta(y_0)\rangle|\ +\\
+\ 2\delta(y)|\delta(x)-\delta(y)|\cdot|\langle\grad\delta(x_0),\grad\delta(x_0)-\grad\delta(y_0)\rangle|\leq\frac12\left(|x_0-y_0|^2+|\delta(x)-\delta(y)|^2\right).
\end{multline*}
\end{proof}

\begin{lem} \label{lemma39} Let $F:(0,+\infty)^3\to(0,+\infty)$ be a continuous function, decreasing in the third variable.
%Let $\Omega\subset\R^N$ be a bounded $C^{1,1}$ domain
and $\Omega_\eps=\{x\in\Omega:\delta(x)<\eps\}$,
with $\eps=\eps(\Omega)>0$ provided by the previous lemma.
Consider $x=x_0+\delta(x)\grad\delta(x_0)$, $x_0\in\partial\Omega$, and
the neighbourhood $A$ of the point $x$, defined by
$A=\{y\in \Omega_\eps: y=y_0+\delta(y)\grad\delta(y_0),|x_0-y_0|<\eps\}$.
Then there exist
constants $0<c_1<c_2$, $c_1=c_1(\Omega),c_2=c_2(\Omega)$ such that
\begin{multline*}
c_1\int_0^\eps dt\int_{B'_\eps}dy'\; F\!\left(\delta(x),t,c_2\sqrt{|y'|^2+|t-\delta(x)|^2}\right)
\leq \int_A F(\delta(x),\delta(y),|x-y|)\;dy\ \leq \\
\leq\ c_2\int_0^\eps dt\int_{B'_\eps}dy'\; F\!\left(\delta(x),t,c_1\sqrt{|y'|^2+|t-\delta(x)|^2}\right).
\end{multline*}
where the ' superscript denotes objects that live in $\R^{N-1}$.
\end{lem}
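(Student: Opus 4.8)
The plan is to change variables twice — first passing to boundary-normal (Fermi) coordinates inside $\Omega_\eps$, then flattening a boundary patch near $x_0$ by a $C^{1,1}$ graph chart — checking at each step that the relevant Jacobian is pinched between two positive constants depending only on $\Omega$, and finally using the previous lemma together with the monotonicity of $F$ in its third argument to replace $|x-y|$ by $\sqrt{|y'|^2+|t-\delta(x)|^2}$. Note that $F$ is allowed to be merely continuous and monotone only in the third slot, and that is exactly what makes the scheme work: the two changes of variables never perturb the first two arguments $\delta(x)$ and $\delta(y)=t$, only the third.

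First I would introduce the map $\Psi(y_0,t)=y_0+t\,\grad\delta(y_0)$, defined for $y_0\in\partial\Omega$ and $0<t<\eps$. Shrinking $\eps$ if necessary (uniformly in $x_0$, by compactness of $\partial\Omega$), $\Psi$ is a bi-Lipschitz homeomorphism onto $\Omega_\eps$ with $\delta\circ\Psi(y_0,t)=t$, and since $\partial\Omega\in C^{1,1}$ has bounded second fundamental form, the Jacobian of $\Psi$ is a polynomial in $t$ with coefficients controlled by the principal curvatures, hence pinched between two positive constants $a_1<a_2$ on $\partial\Omega\times(0,\eps)$. Writing each $y\in A$ as $y=\Psi(y_0,t)$ with $t=\delta(y)$ and $y_0$ its nearest-point projection, this gives
\begin{multline*}
a_1\int_{\Gamma}\!\int_0^\eps F\bigl(\delta(x),t,|x-\Psi(y_0,t)|\bigr)\,dt\,d\sigma(y_0)
\ \le\ \int_A F(\delta(x),\delta(y),|x-y|)\,dy\ \le \\
\le\ a_2\int_{\Gamma}\!\int_0^\eps F\bigl(\delta(x),t,|x-\Psi(y_0,t)|\bigr)\,dt\,d\sigma(y_0),
\end{multline*}
where $\Gamma=\{y_0\in\partial\Omega:|y_0-x_0|<\eps\}$.

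Next I would flatten $\Gamma$: after a rigid motion taking $x_0$ to the origin and $T_{x_0}\partial\Omega$ to $\R^{N-1}\times\{0\}$, a neighbourhood of $x_0$ in $\partial\Omega$ is the graph $y_0=(y',\phi(y'))$ of a $C^{1,1}$ function $\phi$ with $\phi(0)=0$, $\grad\phi(0)=0$; the surface-measure density $\sqrt{1+|\grad\phi(y')|^2}$ is bounded between two positive constants, $|x_0-y_0|\asymp|y'|$, and — up to fixed multiplicative constants in the radius, absorbed into the final $c_1,c_2$ — the set $\Gamma$ corresponds to a ball $B'_\eps\subset\R^{N-1}$. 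Combined with the previous lemma, which gives $|x-\Psi(y_0,t)|^2\asymp|x_0-y_0|^2+|\delta(x)-t|^2\asymp|y'|^2+|\delta(x)-t|^2$, this produces constants $0<b_1<b_2$ depending only on $\Omega$ with $b_1\sqrt{|y'|^2+|\delta(x)-t|^2}\le|x-\Psi(y_0,t)|\le b_2\sqrt{|y'|^2+|\delta(x)-t|^2}$. Since $F$ is positive and decreasing in its third slot, it follows that
\[
F\bigl(\delta(x),t,b_2\sqrt{|y'|^2+|\delta(x)-t|^2}\bigr)\ \le\ F\bigl(\delta(x),t,|x-\Psi(y_0,t)|\bigr)\ \le\ F\bigl(\delta(x),t,b_1\sqrt{|y'|^2+|\delta(x)-t|^2}\bigr).
\]
Inserting this into the chain of inequalities above, converting $d\sigma(y_0)$ into $dy'$ at the cost of the bounded density, and collapsing all the constants $a_i,b_i$ and the density bounds into a single pair $c_1=c_1(\Omega)<c_2=c_2(\Omega)$ (taking $c_1$ smaller and $c_2$ larger as needed) yields the asserted two-sided estimate.

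The only delicate point is the uniform control, as $x_0$ ranges over $\partial\Omega$, of all the change-of-variables Jacobians and of the radii on which the Fermi coordinates and the graph charts are defined, together with the matching of the resulting $y'$-domains with $B'_\eps$ (for the lower bound one uses that the graph chart image of $\{|y_0-x_0|<\eps\}$ contains a ball of radius comparable to $\eps$). This is precisely where the $C^{1,1}$ regularity of $\Omega$ and the compactness of $\partial\Omega$ enter: they guarantee a bounded second fundamental form and a single $\eps>0$ valid at every boundary point. Everything else is bookkeeping of constants, and I expect no further obstacle.
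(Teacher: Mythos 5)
Your proposal is correct and follows essentially the same route as the paper: decompose $y$ into boundary-normal coordinates $(y_0, t)$ with $t=\delta(y)$, invoke the previous lemma together with the monotonicity of $F$ in the third slot to replace $|x-y|$ by $\sqrt{|x_0-y_0|^2+|\delta(x)-t|^2}$, and then flatten the boundary patch via a diffeomorphism with pinched Jacobian. You are somewhat more explicit than the paper about tracking the Fermi-coordinate Jacobian (the paper absorbs it silently into the one-sided inequalities), but the argument is the same.
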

\begin{proof} By writing $y=y_0+\delta(y)\grad\delta(y_0),\ y_0\in\partial\Omega$
and using the Fubini's Theorem, we can split the integration into
the variables $y_0$ and $t=\delta(y)$:
\[
\int_A F(\delta(x),\delta(y),|x-y|)\;dy
\leq\int_{B_\eps(x_0)\cap\partial\Omega}\left(\int_0^{\eps} F(\delta(x),t,|x-y_0-t\grad\delta(y_0)|)\;dt \right)d\sigma(y_0).
\]
Using the monotony of $F$ and the above lemma, we get
\[
\int_A F(\delta(x),\delta(y),|x-y|)\;dy\leq
\int_{B_\eps(x_0)\cap\partial\Omega}\left(\int_0^{\eps} F\!\left(\delta(x),t,c\sqrt{|x_0-y_0|^2+|\delta(x)-t|^2}\right)\;dt \right)d\sigma(y_0)
\]
where $c$ is a universal constant.
Representing $B_\eps(x_0)\cap\partial\Omega$ via a diffeomorphism $\gamma$ with a ball $B'_\eps\subset\R^{N-1}$ centered at 0,
we can transform the integration in the $y_0$ variable into the 
integration onto $B'_\eps$. The volume element $|D\gamma|$ will be bounded above and below by
\[
0<c_1\leq |D\gamma|\leq c_2,
\]
in view of the smoothness assumptions on $\partial\Omega$.
\end{proof}

\begin{bibdiv}
\begin{biblist}

\bib{a1}{article}{
   author={Abatangelo, Nicola},
   title={Large $s$-harmonic functions and boundary blow-up
   solutions for the fractional Laplacian},
   journal={to appear in Discr. Cont. Dyn. Syst. A},
   date={2015},
}

\bib{a2}{article}{
   author={Abatangelo, Nicola},
   title={On a fractional Keller-Osserman condition},
   journal={preprint at arXiv:1412.6298},
   date={Dec. 2014},
}

\bib{axler}{book}{
   author={Axler, Sheldon},
   author={Bourdon, Paul},
   author={Ramey, Wade},
   title={Harmonic function theory},
   series={Graduate Texts in Mathematics},
   volume={137},
   edition={2},
   publisher={Springer-Verlag, New York},
   date={2001},
   pages={xii+259},
}

\bib{bbkr}{collection}{
   author={Bogdan, Krzysztof},
   author={Byczkowski, Tomasz},
   author={Kulczycki, Tadeusz},
   author={Ryznar, Michal},
   author={Song, Renming},
   author={Vondra{\v{c}}ek, Zoran},
   title={Potential analysis of stable processes and its extensions},
   series={Lecture Notes in Mathematics},
   volume={1980},
   note={Edited by Piotr Graczyk and Andrzej Stos},
   publisher={Springer-Verlag, Berlin},
   date={2009},
   pages={x+187},
}

\bib{bsv}{article}{
   author={Bonforte, Matteo},
   author={Sire, Yannick},
   author={Vazquez, Juan L.}
   title={Existence, uniqueness and asymptotic behaviour for fractional porous medium equations on bounded domains},
   journal={preprint at arXiv:1404.6195},
   date={Apr. 2014},
}

\bib{brezis}{book}{
   author={Brezis, Haim},
   title={Functional analysis, Sobolev spaces and partial differential
   equations},
   series={Universitext},
   publisher={Springer, New York},
   date={2011},
   pages={xiv+599},
}

\bib{caffarelli-stinga}{article}{
   author={Caffarelli, Luis A.},
   author={Stinga, Pablo R.},
   title={Fractional elliptic equations, Caccioppoli estimates
        and regularity},
   journal={preprint at arXiv:1409.7721},
   date={Jan. 2015},
}

\bib{cdds}{article}{
   author={Capella, Antonio},
   author={D{\'a}vila, Juan},
   author={Dupaigne, Louis},
   author={Sire, Yannick},
   title={Regularity of radial extremal solutions for some non-local
   semilinear equations},
   journal={Comm. Partial Differential Equations},
   volume={36},
   date={2011},
   number={8},
   pages={1353--1384},
}

\bib{davies1}{article}{
   author={Davies, Edward B.},
   title={The equivalence of certain heat kernel and Green function bounds},
   journal={J. Funct. Anal.},
   volume={71},
   date={1987},
   number={1},
   pages={88--103},
}

\bib{davies2}{article}{
   author={Davies, Edward B.},
   author={Simon, B.},
   title={Ultracontractivity and the heat kernel for Schr\"odinger operators
   and Dirichlet Laplacians},
   journal={J. Funct. Anal.},
   volume={59},
   date={1984},
   number={2},
   pages={335--395},
}

\bib{dhifli}{article}{
   author={Dhifli, Abdelwaheb},
   author={M{\^a}agli, Habib},
   author={Zribi, Malek},
   title={On the subordinate killed B.M in bounded domains and existence
   results for nonlinear fractional Dirichlet problems},
   journal={Math. Ann.},
   volume={352},
   date={2012},
   number={2},
   pages={259--291},
}

\bib{gprssv}{article}{
   author={Glover, Joseph},
   author={Pop-Stojanovic, Zoran R.},
   author={Rao, Murali},
   author={{\v{S}}iki{\'c}, Hrvoje},
   author={Song, Renming},
   author={Vondra{\v{c}}ek, Zoran},
   title={Harmonic functions of subordinate killed Brownian motion},
   journal={J. Funct. Anal.},
   volume={215},
   date={2004},
   number={2},
   pages={399--426},
}

\bib{grss}{article}{
   author={Glover, Joseph},
   author={Rao, Murali},
   author={{\v{S}}iki{\'c}, Hrvoje},
   author={Song, Ren Ming},
   title={$\Gamma$-potentials},
   conference={
      title={Classical and modern potential theory and applications},
      address={Chateau de Bonas},
      date={1993},
   },
   book={
      series={NATO Adv. Sci. Inst. Ser. C Math. Phys. Sci.},
      volume={430},
      publisher={Kluwer Acad. Publ., Dordrecht},
   },
   date={1994},
   pages={217--232},
}

\bib{grubb0}{article}{
   author={Grubb, Gerd},
   title={Fractional Laplacians on domains, a development of H\"ormander's
   theory of $\mu$-transmission pseudodifferential operators},
   journal={Adv. Math.},
   volume={268},
   date={2015},
   pages={478--528},
}

\bib{grubb}{article}{
   author={Grubb, Gerd},
   title={Regularity of spectral fractional Dirichlet and Neumann problems},
   journal={preprint at arXiv:1412.3744},
   date={Feb. 2015}
}

\bib{lions-magenes}{book}{
   author={Lions, J.-L.},
   author={Magenes, E.},
   title={Probl\`emes aux limites non homog\`enes et applications. Vol. 1},
   language={French},
   series={Travaux et Recherches Math\'ematiques, No. 17},
   publisher={Dunod, Paris},
   date={1968},
   pages={xx+372},
}

\bib{ponce}{article}{
   author={Montenegro, Marcelo},
   author={Ponce, Augusto C.},
   title={The sub-supersolution method for weak solutions},
   journal={Proc. Amer. Math. Soc.},
   volume={136},
   date={2008},
   number={7},
   pages={2429--2438},
}

\bib{servadei-valdinoci}{article}{
   author={Servadei, Raffaella},
   author={Valdinoci, Enrico},
   title={On the spectrum of two different fractional operators},
   journal={Proc. Roy. Soc. Edinburgh Sect. A},
   volume={144},
   date={2014},
   number={4},
   pages={831--855},
}

\bib{silvestre}{article}{
    Author = {Silvestre, Luis},
    Title = {Regularity of the obstacle problem for a fractional power of the Laplace operator},
    Journal = {Communications on Pure and Applied Mathematics},
    FJournal = {Commun. Pure Appl. Math.},
    ISSN = {0010-3640; 1097-0312/e},
    Volume = {60},
    Number = {1},
    Pages = {67--112},
    Year = {2007},
}

\bib{song}{article}{
   author={Song, Renming},
   title={Sharp bounds on the density, Green function and jumping function
   of subordinate killed BM},
   journal={Probab. Theory Relat. Fields},
   volume={128},
   date={2004},
   number={2},
   pages={606--628},
}

\bib{song-vondra}{article}{
   author={Song, Renming},
   author={Vondra{\v{c}}ek, Zoran},
   title={Potential theory of subordinate killed Brownian motion in a
   domain},
   journal={Probab. Theory Related Fields},
   volume={125},
   date={2003},
   number={4},
   pages={578--592},
}

\bib{vazquez}{article}{
   author={V{\'a}zquez, Juan Luis},
   title={Recent progress in the theory of nonlinear diffusion with
   fractional Laplacian operators},
   journal={Discrete Contin. Dyn. Syst. Ser. S},
   volume={7},
   date={2014},
   number={4},
   pages={857--885},
}

\bib{veron}{article}{
    author={V\'eron, Laurent},
    title={Elliptic equations involving measures},
    conference={
        title={Stationary partial differential equations, Vol. I},
    },
    book={
        series={Handb. Differ. Equ.},
        publisher={North-Holland, Amsterdam},
    },
    date={2004},
    pages={593--712},
}

\bib{zhang}{article}{
   author={Zhang, Qi S.},
   title={The boundary behavior of heat kernels of Dirichlet Laplacians},
   journal={J. Differential Equations},
   volume={182},
   date={2002},
   number={2},
   pages={416--430},
}

\end{biblist}
\end{bibdiv}

\end{document}